\documentclass[11pt]{amsart}

\usepackage{amsmath,amscd,amssymb}
\usepackage[mathscr]{eucal}
\usepackage[all]{xy}

\newcommand{\Wip}{\mathrm{A}_+^1}

\newcommand{\vanish}[1]{\relax}


\newcommand{\N}{\mathbb{N}}

\newcommand{\R}{\mathbb{R}}
\newcommand{\C}{\mathbb{C}}

\newcommand{\ud}{\mathrm{d}}




\newcommand{\Sum}[2][\relax]{%
 \ifx#1\relax \sideset{}{_{#2}}\sum 
 \else \sideset{}{^{#1}_{#2}}\sum
 \fi}

\DeclareMathOperator{\re}{Re}




\newcommand{\Ce}{\mathrm{C}}


%
%




\newcommand{\tpfeil}{\longmapsto}
\DeclareMathOperator{\dom}{dom}
\DeclareMathOperator{\ran}{ran}

%
%

\newcommand{\cls}[1]{\overline{#1}}



\newcommand{\norm}[2][\relax]{%
   \ifx#1\relax \ensuremath{\left\Vert#2\right\Vert}
   \else \ensuremath{\left\Vert#2\right\Vert_{#1}}
   \fi}

\makeatletter
\newcommand{\sprod}[2]{\ensuremath{%
  \setbox0=\hbox{\ensuremath{#2}}
  \dimen@\ht0
  \advance\dimen@ by \dp0
  \left(\left.#1\rule[-\dp0]{0pt}{\dimen@}\,\right|#2\hspace{1pt}\right)}}
 \makeatother


%
%




%
%

\newcounter{aufzi}

\newcounter{aufzii}

\newcounter{aufziii}

 \newtheorem{thm}{Theorem}[section]
 \newtheorem{cor}[thm]{Corollary}
 \newtheorem{lemma}[thm]{Lemma}
 \newtheorem{prop}[thm]{Proposition}

 \theoremstyle{definition}
 \newtheorem{defn}[thm]{Definition}

 \theoremstyle{remark}
 \newtheorem{rem}[thm]{Remark}

\newtheorem{exa}[thm]{Example}
\newtheorem{remark}[thm]{Remark}

\numberwithin{equation}{section}

\newtheorem{theorem}{Theorem}[section]

\numberwithin{equation}{section} \numberwithin{theorem}{section}

\begin{document}

\title[On rates in approximation theory for operator semigroups]
{On convergence rates in approximation theory for operator semigroups}

\author{Alexander Gomilko}
\address{Faculty of Mathematics and Computer Science\\
Nicolaus Copernicus University\\
ul. Chopina 12/18\\
87-100 Toru\'n, Poland
}

\email{gomilko@mat.umk.pl}

\author{Yuri Tomilov}
\address{Faculty of Mathematics and Computer Science\\
Nicolaus Copernicus University\\
ul. Chopina 12/18\\
87-100 Toru\'n, Poland\\
and
Institute of Mathematics\\
Polish Academy of Sciences\\
\'Sniadeckich 8\\
00-956 Warszawa, Poland
}

\email{tomilov@mat.unk.pl}

\thanks{This work was completed with the support of the NCN grant
 DEC-2011/03/B/ST1/00407.}

\subjclass[2010]{Primary: 47A60, 65J08, 47D03; Secondary: 46N40, 65M12.}

\keywords{Bernstein functions, approximation of $C_0$-semigroups,
functional calculus, convergence rates, Banach spaces,
interpolation}

\date{\today}

\begin{abstract}
We create a new, functional calculus, approach to approximation of $C_0$-semigroups
on Banach spaces.  As an application of this approach, we obtain optimal convergence rates
in classical approximation formulas for $C_0$-semigroups.
In fact, our methods allow one to derive a number of similar formulas
and equip them with sharp convergence rates. As a byproduct, we prove a new interpolation principle leading
to efficient norm estimates in the Banach algebra of Laplace transforms of bounded measures
on the semi-axis.

\end{abstract}

\maketitle

\section{Introduction}

Approximation theory is a classical chapter in the theory of
$C_0$-semigroups with various applications to PDEs and their
numerical analysis. By approximating a $C_0$-semigroup with
exponentials of bounded operators or with rational functions of
its generator one can often reduce the study of a difficult
problem to a simpler one (otherwise intractable). An instance of
such an approach is the famous Hille-Yosida generation theorem
where Yosida's approximation arises.

The two core results in approximation theory, the Trotter-Kato
theorem and the Chernoff product formula, proved to be very
helpful in many areas of analysis, including differential
operators, mathematical physics and probability theory. The
following particular cases of the  Trotter-Kato theorem
representing different approaches to semigroup approximation
became  well-known and pave their way to the most of books on
semigroup theory, see e.g. \cite[Chapters III.4, III.5]{EngNag},
\cite[Chapters 1.7, 1.8]{Go85}, \cite[Chapters 3.4--3.6]{Pa83},
\cite[Chapter 5]{Cast}.

\begin{thm}\label{introapprox}
Let $(e^{-tA})_{t \ge 0}$ be a bounded $C_0$-semigroup on a Banach
space $X.$ Then the following statements hold.
\begin{itemize}
\item [a)] \, {\rm [Yosida's approximation] } For every $x \in X,$
\[
e^{-tA}x=\lim_{n \to \infty} e^{-n t A(n+A)^{-1}}x
\]
uniformly in $t$ from compacts in $\mathbb R_+.$
\item [b)]\, {\rm [Dunford-Segal approximation]}  For every $x \in X,$
\[
e^{-tA}x =\lim_{n \to \infty} e^{-nt(1-e^{-A/n})}x
\]
uniformly in $t$ from compacts in $\mathbb R_+.$
\item [c)]\, {\rm [Euler's approximation]} For every $x \in X,$
\[
e^{-tA}x =\lim_{n \to \infty} (1+tA/n)^{-n}x
\]
uniformly in $t$ from compacts in $\mathbb R_+.$
\end{itemize}
\end{thm}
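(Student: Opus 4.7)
The plan is to treat the three assertions in a unified way. In each case the approximating operator $T_n(t)$ is a function $f_n(A)$ with $f_n(\lambda)\to e^{-t\lambda}$ pointwise on the right half-plane. I will (i) secure a uniform norm bound $\sup_{n,\,t\in[0,T]}\|T_n(t)\|\leq M$ for each $T>0$; (ii) prove strong convergence $T_n(t)x\to e^{-tA}x$ on a dense subspace ($D(A)$ for (a) and (b), $D(A^2)$ for (c)); and (iii) extend to all $x\in X$ by density combined with the uniform bound. The recurring analytic input is the moment estimate
\begin{equation*}
\|(\lambda+A)^{-k}\|\leq M\lambda^{-k}\qquad (\lambda>0,\;k\geq 1),
\end{equation*}
obtained by differentiating $(\lambda+A)^{-1}=\int_0^\infty e^{-\lambda s}e^{-sA}\,ds$ in $\lambda$ and using the standing bound $\|e^{-sA}\|\leq M$.

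For (a) and (b) the approximating generators $A_n:=nA(n+A)^{-1}$ and $B_n:=n(I-e^{-A/n})$ are bounded, so $e^{-tA_n}$ and $e^{-tB_n}$ are defined by norm-convergent exponential series. Expanding $e^{-tA_n}=e^{-nt}e^{n^2 t(n+A)^{-1}}$ and invoking the moment estimate gives $\|e^{-tA_n}\|\leq M$, and similarly $\|e^{-tB_n}\|\leq M$ via $e^{-tB_n}=e^{-nt}\sum_{k\geq 0}(nt)^k e^{-kA/n}/k!$. Strong convergence on $D(A)$ follows from $A_n x=n(n+A)^{-1}Ax\to Ax$ (using the Laplace representation $n(n+A)^{-1}x=\int_0^\infty e^{-s}e^{-sA/n}x\,ds$ and dominated convergence) and $B_n x=n\int_0^{1/n}e^{-sA}Ax\,ds\to Ax$ by strong continuity. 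Setting $C_n\in\{A_n,B_n\}$, the Duhamel identity
\begin{equation*}
e^{-tC_n}x-e^{-tA}x=\int_0^t e^{-(t-s)C_n}(A-C_n)e^{-sA}x\,ds\qquad (x\in D(A))
\end{equation*}
gives a norm bound $Mt\sup_{s\in[0,t]}\|(A-C_n)e^{-sA}x\|$. Since the family $\{A-C_n\}_n$ is uniformly bounded as an operator $(D(A),\|\cdot\|_A)\to X$ and converges pointwise to $0$ on $D(A)$, its convergence is uniform on the compact orbit $\{e^{-sA}x:s\in[0,T]\}\subset D(A)$. Density of $D(A)$ and the uniform bound then propagate the conclusion to all $x\in X$.

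For (c), the moment estimate with $\lambda=n/t$ and $k=n$ yields $\|(I+tA/n)^{-n}\|=\|(n/t)^n((n/t)+A)^{-n}\|\leq M$. Writing $R:=(I+tA/n)^{-1}$ and $S:=e^{-tA/n}$, so that $S^n=e^{-tA}$, the telescoping identity
\begin{equation*}
R^n-S^n=\sum_{k=0}^{n-1}R^{n-1-k}(R-S)S^k
\end{equation*}
combined with the one-step bound $\|(R-S)x\|=O((t/n)^2)\|A^2 x\|$ for $x\in D(A^2)$ yields $\|(R^n-S^n)x\|=O(t^2/n)\|A^2 x\|$ uniformly on compact $t$-intervals. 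The one-step bound itself comes from comparing the second-order expansions $(I+hA)^{-1}=I-hA+h^2(I+hA)^{-1}A^2$ and $e^{-hA}=I-hA+h^2\int_0^1(1-u)A^2 e^{-uhA}\,du$, each remainder being of norm $\leq Mh^2\|A^2 x\|$. Density of $D(A^2)$ in $X$ and uniform boundedness propagate the convergence to all $x\in X$.

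The main obstacle throughout is securing the uniform bound $\sup_n\|T_n(t)\|\leq M$ on compact $t$-intervals; without it one cannot pass from pointwise convergence on a dense subspace to $X$. In all three cases this reduces to the moment estimate on resolvent powers. Once this is in hand, uniformity in $t$ on compacts follows from the explicit error formulas above, and the density extension is standard.
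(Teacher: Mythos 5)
Your proof is correct, but note that the paper does not actually prove Theorem~\ref{introapprox}: it presents it as classical background, citing textbook sources (Engel--Nagel, Pazy, Goldstein, van Casteren), and its own contribution is the quantitative refinement in Theorem~\ref{apprintro1} / Corollary~\ref{Ex22}. Your argument is the standard qualitative one: the moment estimate $\|(\lambda+A)^{-k}\|\le M\lambda^{-k}$ gives stability, Duhamel's formula (for Yosida and Dunford--Segal, whose approximating generators $A_n$, $B_n$ are bounded) gives convergence on $D(A)$, a telescoping identity plus a second-order Taylor comparison gives the Euler case on $D(A^2)$, and density plus the uniform bound upgrade to all of $X$ with uniformity in $t$ on compacts. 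That is exactly the Trotter--Kato route and each step you outline is sound; in particular the reductions $e^{-tA_n}=e^{-nt}e^{n^2t(n+A)^{-1}}$ and $(I+tA/n)^{-n}=(n/t)^n((n/t)+A)^{-n}$, the one-step bound $\|(R-S)x\|\le \tfrac32 M (t/n)^2\|A^2x\|$, and the equicontinuity argument on the compact orbit $\{e^{-sA}x : s\in[0,T]\}\subset D(A)$ are all correct. The paper's route is genuinely different: it recognizes each formula as $e^{-nt\varphi(A/n)}$ or $e^{-n\varphi(tA/n)}$ for a Bernstein function $\varphi\in\Phi$, reduces everything to scalar norm estimates in the Banach algebra ${\rm A}^1_+(\mathbb{C}_+)$ of Laplace transforms, and obtains explicit rates $O((t/n)^{\alpha/2})$ on $\dom(A^\alpha)$, $\alpha\in(0,2]$. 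That approach buys sharp, uniform-over-$\Phi$ convergence rates (and, with the matrix trick, improved rates for analytic semigroups), but at the cost of the Hille--Phillips machinery; your approach is more elementary and self-contained but purely qualitative, and its natural output is convergence on $D(A)$ or $D(A^2)$ rather than the fractional-power scale the paper needs. Both correctly establish the theorem; if one wanted to derive Theorem~\ref{introapprox} from the paper's own results, one would combine Corollary~\ref{Ex22} (convergence with a rate on $\dom(A^\alpha)$) with the same density-plus-uniform-bound argument you use in step (iii).
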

(The name for the approximation formula in b) is not
well-established, although some authors use this terminology. We
find it natural too since the formula was introduced for the first
time by Dunford and Segal in \cite{DuSe46}.)

While theorems on semigroup approximation are very useful, with a few exceptions, they still have a merely qualitative
character and the natural problem of finding  optimal rates of
approximation remains open.
The aim of our paper is to fill this gap.

The approximations introduced in Theorem \ref{introapprox} will be of primary importance for us.
So, before describing our approach, we give a short account of known rate estimates for these approximations.
Among the three,
Euler's formula attracted most of
attention and relevant results can be summarized as follows.
\begin{thm}\label{survey}
Let $-A$ be the generator of a bounded $C_0$-semigroup  $(e^{-tA})_{t\ge 0}$ on a Banach space $X.$
Then the following hold.
\begin{itemize}
\item [(i)] \cite[Theorem 4]{BT79} There exists $c >0$ such that for all $n \in \mathbb N$ and $t>0,$
\begin{equation*}
\|e^{-tA}x - (1+tA/n)^{-n}x\|\le c
\left(\frac{t}{\sqrt{n}}\right)^2 \, \|A^2x\|,\qquad x\in {\rm
dom} \,(A^2);
\end{equation*}
\item [(ii)] \cite[Theorem 1.7]{Flory} There exists $c> 0$ such that for all $n \in \mathbb N$ and $t>0,$
\begin{equation*}
\|e^{-tA}x- (1+tA/n)^{-n}x\|\le c\frac{t}{\sqrt{n}}\,
\|Ax\|,\qquad x\in {\rm dom}\,(A);
\end{equation*}
\item [(iii)]\cite[Corollary 4.4]{Kovacs07} There exists $c>0$ such that for all $n \in \mathbb N,$  $t>0$ and
$0< \alpha\le 2,$
\begin{equation*}\label{inter}
\|e^{-tA}x- (1+tA/n)^{-n}x\|\le c
\left(\frac{t}{\sqrt{n}}\right)^{\alpha} \|x\|_{\alpha,2,\infty},
\quad x\in X_{\alpha,2,\infty},
\end{equation*}
where the Banach space $X_{\alpha,2,\infty}$ (called a Favard space) is defined as
\[
X_{\alpha,2,\infty}:=\left\{x\in X:\, \|x\|_{\alpha,2,\infty}:=
\|x\|+\sup_{t>0}\,\frac{\|(e^{-tA}-I)^2x\|}{t^\alpha}<\infty\right\}.
\]
\end{itemize}
\end{thm}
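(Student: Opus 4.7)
The unifying idea is the integral representation obtained from the scalar identity $(1+z/n)^{-n} = \int_0^\infty p_n(s)\,e^{-sz}\,ds$, where $p_n(s) = \frac{n^n}{(n-1)!}\, s^{n-1} e^{-ns}$ is the density of the Gamma distribution with shape $n$ and scale $1/n$; via the Hille--Phillips functional calculus this lifts to
\[
(1+tA/n)^{-n}\,x = \int_0^\infty p_n(s)\, e^{-tsA}x\, ds, \qquad x\in X.
\]
The density $p_n$ has mean $1$ and variance $1/n$, so by the central limit theorem its mass concentrates near $s=1$ at scale $1/\sqrt{n}$. This is the source of the $t/\sqrt{n}$ rate throughout.

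For (ii), assume $x\in\dom(A)$. The fundamental theorem of calculus gives $e^{-tsA}x - e^{-tA}x = -\int_1^s tA\,e^{-tuA}x\, du$, so with $M=\sup_{t\ge 0}\|e^{-tA}\|$,
\[
\|(1+tA/n)^{-n}x - e^{-tA}x\| \le Mt\,\|Ax\|\int_0^\infty p_n(s)|s-1|\, ds \le Mt\,\|Ax\|\sqrt{1/n},
\]
where the last step is Cauchy--Schwarz against the variance $1/n$. For (i), assume $x\in\dom(A^2)$ and use Taylor expansion with integral remainder:
\[
e^{-tsA}x - e^{-tA}x + tA(s-1)e^{-tA}x = \int_1^s (s-u)(tA)^2 e^{-tuA}x\, du.
\]
Integrating against $p_n$, the first-order term vanishes because $\int_0^\infty p_n(s)(s-1)\,ds = 0$, leaving
\[
\|(1+tA/n)^{-n}x - e^{-tA}x\| \le \tfrac{M t^2}{2} \|A^2x\| \int_0^\infty p_n(s)(s-1)^2\, ds = \tfrac{M t^2}{2n}\,\|A^2x\|,
\]
which is (i) with the constant $c=M/2$.

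For (iii) I would argue by real interpolation. The linear operator $T_{n,t} := e^{-tA} - (1+tA/n)^{-n}$ is bounded on $X$ uniformly (norm $\le 2M$) and, by (i), maps $\dom(A^2)$ into $X$ with norm $\le C(t/\sqrt{n})^2$. Applying the $K$-method functor of parameters $(\alpha/2,\infty)$ gives
\[
\|T_{n,t}x\| \le c \left(\tfrac{t}{\sqrt{n}}\right)^\alpha \|x\|_{(X,\dom(A^2))_{\alpha/2,\infty}},
\]
so (iii) reduces to the identification of the Favard space $X_{\alpha,2,\infty}$ with the real interpolation space $(X,\dom(A^2))_{\alpha/2,\infty}$ and to equivalence of their norms.

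\textbf{Main obstacle.} The technical heart of (iii) is precisely this identification: one must show that the second-order semigroup modulus $\sup_{t>0} t^{-\alpha}\|(e^{-tA}-I)^2 x\|$ is equivalent to the Peetre $K$-functional $K(t^2,x;X,\dom(A^2))$. This is a second-order analogue of the classical Butzer--Berens characterization of Favard classes for bounded semigroups; it rests on resolvent representations of the form $(I-e^{-tA})^2 = t^2 A^2\int\!\int e^{-(u+v)A}du\,dv$ together with the Hille--Phillips calculus. The endpoint $\alpha=2$ and the transition $\alpha\to 2^-$ require extra care because the $K$-interpolation theorem only yields the open range $\alpha\in(0,2)$, so the case $\alpha=2$ must be handled by a direct inspection of the remainder term above combined with the inclusion $X_{2,2,\infty}\supset\dom(A^2)$.
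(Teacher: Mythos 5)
The paper does not prove Theorem \ref{survey}: it is a literature summary in the introduction, with each item attributed to \cite{BT79}, \cite{Flory}, and \cite{Kovacs07} respectively. So there is no ``paper's own proof'' to match against; what I can do is check your argument on its merits and relate it to the machinery the paper actually develops.

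Your proofs of (i) and (ii) are correct, and they are essentially the classical Brenner--Thom\'ee/Flory--Neubrander--Weis argument. The Gamma-density representation $(1+tA/n)^{-n}x=\int_0^\infty p_n(s)\,e^{-stA}x\,ds$ with $p_n(s)=\frac{n^n}{(n-1)!}s^{n-1}e^{-ns}$ is exactly the Hille--Phillips realisation of the scalar identity, the mean/variance computation ($1$ and $1/n$) is right, the cancellation of the first-order Taylor term in (i) is the key point and you use it correctly, and Cauchy--Schwarz gives the claimed $1/\sqrt n$ in (ii). Your constants ($M/2$ in (i), $M$ in (ii)) are consistent with the stated form.

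For (iii), the reduction to a two-sided bound of the form $\|T_{n,t}x\|\le c\,K(t^2/n,x;X,\dom(A^2))$ followed by the identification of $X_{\alpha,2,\infty}$ with the real interpolation space $(X,\dom(A^2))_{\alpha/2,\infty}$ is the correct route and is indeed what \cite{Kovacs07} does. You are candid that you leave the $K$-functional/second-order-modulus equivalence (the Butzer--Berens characterisation) unproved; that is the real content of (iii) and it should be cited or worked out. One small remark on your endpoint worry: the abstract interpolation theorem's restriction to $\theta\in(0,1)$ is not actually an obstacle here, since the inequality $\|T_{n,t}x\|\le c\,K(s,x)$ can be read off directly from the two operator bounds (it holds for every $s>0$ and needs no interpolation theorem), and the $K$-functional/modulus equivalence is not restricted to the open range either; so $\alpha=2$ can be handled the same way as $\alpha\in(0,2)$ provided one proves the modulus characterisation with uniform constants.

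For context, the paper later reproves refined versions of (i)--(ii) (for $x\in\dom(A^\alpha)$, $\alpha\in(0,2]$, rather than Favard spaces) by a quite different route: it encodes Euler's formula as $\varphi(z)=\log(1+z)$, works entirely inside the Banach algebra $\Wip(\C_+)$ of Laplace transforms, computes the exact norm of $z^{-2}\Delta^\varphi_t$ (Proposition \ref{T1Dop}), and then interpolates in $\Wip(\C_+)$ via Theorem \ref{FP}. The advantage of that framework is that it treats Yosida, Dunford--Segal, and Euler uniformly as instances of a single Bernstein-function phenomenon, with explicit constants and a matching sharpness result. Your direct Taylor-expansion argument is more elementary and gives better constants for (i)--(ii), but it is tied to the specific Gamma-density structure of Euler's formula and does not generalise to the other two schemes.
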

Recall that
$
{\rm dom}\,(A^\alpha)$ is embedded continuously in $X_{\alpha,2,\infty},$ $\alpha\in (0,2),$ but
there are examples (see e.g.
\cite[p. 340]{Komatsu1966})
showing the the inclusion ${\rm dom}\,(A^\alpha) \subset X_{\alpha,2,\infty}$ is in general strict.
The results similar to Theorem \ref{survey} were also obtained in \cite{Hassan}. Note however that
in  \cite{BT79}, \cite{Flory},  \cite{Hassan} and \cite{Kovacs07} rational approximations
more general than Euler's formula were studied,
and Theorem \ref{survey} is a partial case of more general statements proved there.

Much less is known about Yosida's and Dunford-Segal approximation formulas
for bounded $C_0$-semigroups on Banach spaces. Some partial results for analytic semigroups  can be found in
\cite{Ar12} and \cite{Vi1} (see also \cite{Vit}). To the best of our knowledge, there are no papers
 devoted to convergence rates in Yosida's and Dunford-Segal formulas on our level of generality.

In this paper we propose a general approach to approximation
formulas for $C_0$-semigroups on Banach spaces (as in Theorem \ref{introapprox} and similar ones). Our basic
observation is that behind each of the approximation formulas
in Theorem \ref{introapprox} there is a specific Bernstein function and one may look at
approximation issues through the lens of asymptotic relations
\begin{equation}
e^{-nt\varphi(z/n)} \to e^{-tz}, \qquad n \to \infty, \,\, z>0,
\label{a1}
\end{equation}
or
\begin{equation}
 e^{-n\varphi(tz/n)} \to
e^{-tz}, \qquad n \to \infty,\,\, z>0, \label{a2}
\end{equation}
with $\varphi$ being a Bernstein function. In
particular, Yosida's and Dunford-Segal approximations arise when
one puts $\varphi(z)=z/(z+1)$ and $\varphi(z)=1-e^{-z}$ in
\eqref{a1}, respectively, and Euler's approximation corresponds to the choice $\varphi(z)=\log(1+z)$ in \eqref{a2}.
By the Hille-Phillips functional calculus
machinery and subordination, approximations \eqref{a1} and \eqref{a2} give rise to their respective operator
versions
\begin{equation}
e^{-nt\varphi(A/n)} \to e^{-tA}, \qquad n \to
\infty, \label{a11}
\end{equation}
and
\begin{equation}
 e^{-n\varphi(tA/n)} \to
e^{-tA}, \qquad n \to \infty,\label{a21}
\end{equation}
where $-A$ is the generator of a bounded $C_0$-semigroup
$(e^{-tA})_{t \ge 0}$ on a Banach space $X,$ and convergence takes
place in the strong topology of $X.$ Our approach extends the
functional calculus approach to approximation issues started in
the foundational papers  \cite{BT79} and \cite{HK79}, and
developed further in many subsequent articles. For comparatively
recent contributions to this area, see \cite{Flory}, \cite{EgRo}, \cite{Hassan},
and \cite{Ne13}.

Theorem \ref{survey}  suggests that it is natural to study a semigroup approximation formula by
restricting it to the domains of powers of generators, and to hope
that smoothness of the elements will correspond to a certain rate
of convergence in \eqref{a11} and \eqref{a21}.
The present paper develops this observation in a comprehensive way.
Our  Bersntein
functions setting allows one to unify the approximations formulas for  $C_0$-semigroups and
to \emph{equip them with  rates} which, moreover, are optimal under natural spectral assumptions.
As a result,  we quantify classical approximation formulas in Theorem \ref{apprintro1},
restricted to the domains of fractional powers of $A,$ thus extending a number of known results and
proving several new ones. In particular, the following is true.
\begin{thm}\label{apprintro1}
Let  $-A$ be the generator of a bounded $C_0$-semigroup
$(e^{-tA})_{t \ge 0}$ on a Banach space $X,$ and let $\alpha \in
(0,2].$ Then for all $x\in \dom(A^\alpha),\quad t>0,$ and $n \in
\mathbb N,$
\begin{itemize}
\item [a)] \, {\rm [Yosida's approximation]}
\[
\|e^{-tA}x-e^{-n t A(n+A)^{-1}}x\| \le 16 M
\left(\frac{t}{n}\right)^{\alpha/2} \|A^\alpha x\|;
\]
\item [b)]\, {\rm [Dunford-Segal approximation]}
\[
\|e^{-tA}x-e^{-nt(1-e^{-A/n})}x\| \le 8 M
\left(\frac{t}{n}\right)^{\alpha/2} \|A^\alpha x\|;
\]

\item [c)]\, {\rm [Euler's approximation]}
\[
\|e^{-tA}x - (1+ tA/n)^{-n}x\|\le 8
M\left(\frac{t^2}{n}\right)^{\alpha/2} \|A^\alpha x\|.
\]
\end{itemize}
where $M:=\sup_{t \ge0} \|e^{-tA}\|.$
\end{thm}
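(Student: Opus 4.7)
The overall strategy is to transfer each of the three operator-theoretic estimates to a single scalar estimate in the Hille--Phillips algebra $\Wip$ of Laplace transforms of bounded Borel measures on $[0,\infty)$, and then apply the HP functional calculus bound $\|f(A)\|\le M\|f\|_{\Wip}$. For each of the Bernstein functions $\varphi_Y(z)=z/(z+1)$, $\varphi_{DS}(z)=1-e^{-z}$, $\varphi_E(z)=\log(1+z)$, define the scalar ``error function''
\[
E_{n,t}(z):=e^{-tz}-e^{-nt\varphi(z/n)}\quad\text{(Yosida, Dunford--Segal)},\qquad E_{n,t}(z):=e^{-tz}-e^{-n\varphi(tz/n)}\quad\text{(Euler)}.
\]
If I can show that $E_{n,t}/z^\alpha\in\Wip$, then the HP calculus, together with the factorisation $(E_{n,t}/z^\alpha)(A)\,A^\alpha x=E_{n,t}(A)x$ on $\dom(A^\alpha)$, gives
\[
\|e^{-tA}x-e^{-nt\varphi(A/n)}x\|\le M\bigl\|E_{n,t}/z^\alpha\bigr\|_{\Wip}\|A^\alpha x\|,\qquad x\in\dom(A^\alpha).
\]
So everything reduces to proving a bound of the form $\|E_{n,t}/z^\alpha\|_{\Wip}\le C\,\tau^{\alpha/2}$, where $\tau=t/n$ for Yosida/Dunford--Segal and $\tau=t^2/n$ for Euler.

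\textbf{Step 1 (the case $\alpha=2$).} The common feature of the three Bernstein functions is $\varphi(0)=0$ and $\varphi'(0)=1$, so $E_{n,t}$ vanishes to second order at the origin and $E_{n,t}/z^2$ is holomorphic on $\mathbb{C}_+$. For each $\varphi$ I produce an explicit representation of $E_{n,t}/z^2$ as the Laplace transform of a signed measure and estimate its total variation. In the Yosida case, the identity $e^{-ntz/(z+n)}=e^{-tz}\exp\!\bigl(tz^2/(z+n)\bigr)$ combined with $1-e^{w}=-w\int_0^1 e^{sw}\,\mathrm{d}s$ yields a one-dimensional integral representation from which $\|E_{n,t}/z^2\|_{\Wip}\le Ct/n$ follows directly. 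The Dunford--Segal and Euler cases admit analogous, though more delicate, representations (a compound Poisson/subordination formula in the former, a Gamma-density representation of $(1+tz/n)^{-n}$ in the latter), leading respectively to $\|E_{n,t}/z^2\|_{\Wip}\le Ct/n$ and $\|E_{n,t}/z^2\|_{\Wip}\le Ct^2/n$.

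\textbf{Step 2 (the endpoint $\alpha=0$).} By the Bochner--Phillips subordination principle, the approximating families $(e^{-nt\varphi(A/n)})$ and $(e^{-n\varphi(tA/n)})$ are themselves bounded by the semigroup bound $M$, whence $\|E_{n,t}(A)\|\le 2M$. Equivalently, the scalar functions $e^{-tz}$ and $e^{-nt\varphi(z/n)}$ each have $\Wip$-norm at most $1$, so $\|E_{n,t}\|_{\Wip}\le 2$.

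\textbf{Step 3 (interpolation).} I invoke the interpolation principle in $\Wip$ announced in the abstract: for $f\in\Wip$ with $f/z^2\in\Wip$,
\[
\bigl\|f/z^\alpha\bigr\|_{\Wip}\le C\,\|f\|_{\Wip}^{1-\alpha/2}\,\bigl\|f/z^2\bigr\|_{\Wip}^{\alpha/2},\qquad 0\le\alpha\le 2.
\]
Applied to $f=E_{n,t}$ with the bounds of Steps 1 and 2, this produces the desired estimate $\|E_{n,t}/z^\alpha\|_{\Wip}\le C_\alpha\,\tau^{\alpha/2}$, uniformly in $n,t$. Tracking the numerical constants through the three substeps supplies the factors $16M$, $8M$, $8M$ displayed in the theorem. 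The principal obstacle is Step 1: one must construct, for each $\varphi$, a measure representation of $E_{n,t}/z^2$ whose total variation scales sharply in $n$ and $t$. The Euler case is the most demanding because $(1+tz/n)^{-n}$ is not a convolution of exponentials and one must extract the correct $1/n$ scaling from the Gamma density; Step 3 also requires that the interpolation constant $C$ be finite and explicit, so that the powers of $M$ do not proliferate.
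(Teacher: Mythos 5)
Your proposal follows exactly the skeleton of the paper's argument: reduce to $\Wip(\mathbb C_+)$-norm estimates via the Hille--Phillips calculus, obtain endpoint bounds at $\alpha=0$ and $\alpha=2$, then interpolate (Theorem \ref{FP}/Corollary \ref{coralpha}). The structural observation driving everything --- that $\varphi(0)=0$, $\varphi'(0)=1$ makes the error vanish to second order --- is also the paper's. The substantive difference is in how you propose to carry out Step~1. You suggest three ad hoc constructions of $z^{-2}\Delta^{\varphi}_{t}$ as a Laplace transform of a signed measure, one per Bernstein function, and leave the Dunford--Segal and Euler cases as sketches (``compound Poisson,'' ``Gamma density''). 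The paper instead proves a single clean statement (Proposition \ref{T1Dop}) valid for \emph{every} $\varphi\in\Phi$: using the subordination measure $\nu_t$ with $\mathcal L\nu_t = e^{-t\varphi}$, it exhibits $z^{-2}\Delta^{\varphi}_t$ explicitly as the Laplace transform of a \emph{nonnegative} density $G_t$, i.e.\ as a bounded \emph{completely monotone} function. This buys two things your sketch does not: the $\Wip$-norm is then literally the value at $0+$, computed by L'H\^opital to be $\tfrac{t}{2}|\varphi''(0+)|$, which is an equality rather than an upper bound; and no function-by-function estimates are needed, so the numerical constants ($16M$, $8M$, $8M$) fall out of $|\varphi''(0+)|$ alone. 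In your Yosida computation the resulting kernel is in fact of one sign, so you are implicitly seeing this complete monotonicity without naming it; making it the organizing principle is what turns three delicate estimates into one. Finally, a technical point you pass over: the factorization $(z^{-\alpha}\Delta^{\varphi}_{t,n})(A)A^\alpha x=\Delta^{\varphi}_{t,n}(A)x$ needs $A^{-\alpha}$ to be meaningful, i.e.\ $A$ injective; the paper circumvents this by working with $A+\delta$ and letting $\delta\downarrow 0$ using the boundedness of $\Delta^{\varphi}_{t,n}(\cdot+\delta)\to\Delta^{\varphi}_{t,n}$ in $\Wip(\mathbb C_+)$. You should include that limiting argument to cover noninjective generators.
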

Theorem \ref{apprintro1},c) was obtained in \cite{GTeuler} with a different proof and a better constant.
Note that  the domain  $(0,2]$ for $\alpha$ in Theorem \ref{apprintro1} cannot in general be enlarged,
see Remark \ref{rangealpha} below.

Another advantage of putting approximation theory into a Bernstein
functions framework is that such a setting reduces the study of
rates to norm estimates in the algebra $\Wip(\mathbb C_+)$ of
Laplace transforms of bounded Radon measures on the semi-axis. In
turn, the estimates in $\Wip(\mathbb C_+)$ can be done in an
elegant and transparent way by means of a new interpolation
principle in $\Wip(\mathbb C_+),$ Theorem \ref{FP} below.

Moreover,  invoking a certain matrix construction and yet another interpolation result, we improve convergence rates in Theorem \ref{apprintro1} if $(e^{-tA})_{t \ge 0}$ is a  bounded analytic
$C_0$-semigroup.
Namely, the following result holds.
\begin{thm}\label{introanal}
Let $(e^{-tA})_{t \ge 0}$ be a  bounded analytic $C_0$-semigroup
on $X,$ with generator $-A$. Then there exists $C>0$ such that for
every $\alpha \in [0,1],$ \begin{itemize}
\item [a)] \, {\rm [Yosida's approximation]}
\[
\|e^{-tA}x-e^{-n t A(n+A)^{-1}}x\| \le C
(nt^{1-\alpha})^{-1} \|A^\alpha x\|;
\]
\item [b)]\, {\rm [Dunford-Segal approximation]}
\[
\|e^{-tA}x-e^{-nt(1-e^{-A/n})}x\| \le
C(nt^{1-\alpha})^{-1} \|A^\alpha x\|;
\]

\item [c)]\, {\rm [Euler's approximation]}
\[
\|e^{-tA}x - (1+t/nA)^{-n}x\|\le
 C(nt^{-\alpha})^{-1} \|A^\alpha x\|,
\]
\end{itemize}
for all
$t
>0, n \in \mathbb N$ and $x \in \dom (A^{\alpha}).$
\end{thm}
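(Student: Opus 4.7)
The plan is to leverage the extra structure of analytic semigroups, namely sectoriality of $-A$ with angle strictly less than $\pi/2$ and the smoothing bound $\|tAe^{-tA}\|\le C$ for all $t>0$, to sharpen the estimates of Theorem \ref{apprintro1}. My strategy is to establish the two endpoint cases $\alpha=0$ and $\alpha=1$ directly and then fill in $\alpha\in(0,1)$ by interpolation.

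I would begin with $\alpha=1$. Following the proof scheme of Theorem \ref{apprintro1}, write the approximation error as $h_{n,t}(A)\cdot Ax$ where
\[
h_{n,t}(z)\,:=\,\frac{e^{-tz}-g_{n,t}(z)}{z},\qquad g_{n,t}(z)=e^{-tn\varphi(z/n)}\ \text{ or }\ (1+tz/n)^{-n},
\]
with $\varphi$ the Bernstein function of the given approximation. The task is to show that $h_{n,t}\in\Wip(\mathbb{C}_+)$ with
\[
\|h_{n,t}\|_{\Wip}\le C/n\quad(\text{Yosida, Dunford--Segal}),\qquad \|h_{n,t}\|_{\Wip}\le Ct/n\quad(\text{Euler}),
\]
uniformly in $t>0$. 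The Bernstein-function representation of $\varphi$ together with the new interpolation principle in $\Wip(\mathbb{C}_+)$ (Theorem \ref{FP}) should do the job. Applying the resulting operator $h_{n,t}(A)\in\Lin(X)$ to $Ax$ with $x\in\dom(A)$ yields the $\alpha=1$ bounds.

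The endpoint $\alpha=0$ is where analyticity truly enters. I would use the sectorial functional calculus to represent the error as a contour integral
\[
\bigl(e^{-tA}-g_{n,t}(A)\bigr)x\,=\,\frac{1}{2\pi\imun}\int_\Gamma (e^{-tz}-g_{n,t}(z))(z+A)^{-1}x\,\diff{z}
\]
over the boundary $\Gamma=\partial \Sigma$ of a sector $\Sigma$ slightly larger than the sectoriality sector of $-A$. Inserting the scalar bound $|e^{-tz}-g_{n,t}(z)|\le |z|\,|h_{n,t}(z)|$ obtained from the $\alpha=1$ step, together with $\|(z+A)^{-1}\|\le M/|z|$ on $\Gamma$, and performing the change of variable $z\mapsto z/t$, produces the announced rates $C/(nt)$ for Yosida and Dunford--Segal and $C/n$ for Euler.

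For general $\alpha\in(0,1)$ I would interpolate between the two endpoints using the identification $\dom(A^\alpha)=[X,\dom(A)]_\alpha$ valid for sectorial $A$. This is where the matrix construction mentioned in the introduction enters: encoding the $t$-scaling through a block operator $\diag(A,A)$ on $X\oplus X$ together with an appropriate off-diagonal term turns the desired $\alpha$-dependent estimate into an operator-valued multiplier bound, to which a second interpolation result in the $\Wip$-algebra applies and produces the required intermediate rates. The main obstacle I anticipate is the uniform-in-$t$ $\Wip$-norm bound at the $\alpha=1$ endpoint: controlling $h_{n,t}$ simultaneously for all $t>0$ forces a careful splitting of the scalar analysis into the regimes $tz\lesssim 1$ and $tz\gtrsim 1$, and is the technical heart of the argument.
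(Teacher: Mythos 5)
Your $\alpha=1$ step contains the main gap. You claim the scalar $\Wip$-norm bound $\|h_{n,t}\|_{\Wip(\C_+)}\le C/n$, uniformly in $t>0$, can be obtained from the Bernstein representation of $\varphi$ and Theorem \ref{FP}. It cannot: any estimate deduced from the Hille--Phillips calculus via $\Wip(\C_+)$-norms applies to \emph{every} bounded $C_0$-semigroup, whereas the claimed rate fails for non-analytic ones. Indeed, Proposition \ref{T1Dop} gives $\|z^{-2}\Delta^\varphi_t\|_{\Wip(\C_+)}=\tfrac{t}{2}|\varphi''(0+)|$, and combining with $\|\Delta^\varphi_t\|_{\Wip(\C_+)}\le 2$ via Corollary \ref{coralpha} one obtains $\|z^{-1}\Delta^\varphi_t\|_{\Wip(\C_+)}=O(\sqrt{t})$; after rescaling, $\|z^{-1}\Delta^\varphi_{t,n}\|_{\Wip(\C_+)}$ is of order $\sqrt{t/n}$, not $1/n$, and Theorem \ref{sharpHY1} shows this rate is sharp over the class of all bounded semigroups. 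Thus the improvement to $1/n$ at $\alpha=1$ is exactly where analyticity must enter, and in the paper it does: one passes to the block operator $\mathcal{A}=\left(\begin{smallmatrix}A & A\\ 0 & A\end{smallmatrix}\right)$ on $X\oplus X$, applies the $\alpha=2$ estimate \eqref{Estt11} to $\mathcal{A}$, and reads off from the off-diagonal entry $tAe^{-tA}-tA\varphi'(A)e^{-t\varphi(A)}$ a $t$-uniform bound on $\dom(A)$ (Theorem \ref{thtr}). Your picture of the matrix construction as an intermediate-$\alpha$ device is therefore a misreading: it is already indispensable at the endpoint $\alpha=1$.

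Two further remarks. Your $\alpha=0$ contour-integral route is a legitimate alternative to the paper's decomposition $e^{-tA}-e^{-t\varphi(A)}=(e^{-tA}-\varphi'(A)e^{-t\varphi(A)})+(\varphi'(A)-1)e^{-t\varphi(A)}$, where the first piece is again controlled by the matrix trick (Lemma \ref{leminterm}) and the second by the explicit bound $\sup_{t>0}\|t\psi(A)e^{-t\varphi(A)}\|<\infty$ of Theorem \ref{AunboundedG}; both methods should deliver the $O(1/t)$ rate. However, for $\alpha\in(0,1)$ the identification $\dom(A^\alpha)=[X,\dom(A)]_\alpha$ is not valid for general sectorial operators --- it requires bounded imaginary powers --- so your interpolation step has a second gap. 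The paper avoids interpolation spaces entirely via Theorem \ref{FPAn}: writing $A^{-\alpha}B=\int_{0+}^{\infty}(\tau+A)^{-1}B\,\nu(d\tau)$ with a Stieltjes measure $\nu$ and combining the two resolvent bounds $\|(\tau+A)^{-1}B\|\le \min\bigl(M_0\|B\|\tau^{-1},\,(1+M_0)\|A^{-1}B\|\bigr)$ pointwise in $\tau$, which works for any generator of a bounded semigroup with dense range and immediately produces the $a^{1-\alpha}b^{\alpha}$ interpolation inequality of Corollary \ref{coralphaAn}.
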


In particular, Theorem \ref{introanal},c) extends well-known
results from \cite{LTW91} and \cite{CLPT93} on convergence rates
in  Euler's formula for bounded analytic semigroups restricted to
domains of integer powers of their generators, see also
\cite{Flory}, \cite{JaNe12}, \cite{Vi1} and \cite{Za08}.
  (One may also consult
\cite{BP04} and \cite{Pa04}
where the results from \cite{LTW91} and \cite{CLPT93} were  reproved in an alternative way.)
Theorem \ref{introanal}, a) and b) improves substantially the results from \cite{Ar12} and \cite{Vi2}.

\section{Notations}

For a closed linear operator $A$ on a complex Banach space $X$ we
denote by $\dom(A),$ $\ran(A),$ $\rho(A)$ and $\sigma(A)$ the
{\em domain}, the {\em range}, the {\em resolvent set} and the {\em
spectrum} of $A$, respectively, and let  $\cls{\ran}(A)$ stand for the norm-closure of the range of $A$.
We will write the Lebesgue integral $\int_{(0,\infty)}$ as
$\int_{0+}^{\infty}$. (Such a notation is established in
\cite{SchilSonVon2010} and, as far as we rely on
\cite{SchilSonVon2010} essentially, we decided to use this
notation too.) With a slight abuse of notation, we use  $z^a f$ to
denote the function $z\to z^a f(z)$ as an element of a function
space.

 Finally, we let
\[
\mathbb C_{+}=\{z\in \C:\,{\rm Re}\,z>0\},\quad \mathbb R_+=[0,\infty).
\]

\section{Completely monotone, Stieltjes and  Bernstein functions}

In this section we collect  notions and facts from function theory needed in the sequel.

Let ${\rm M}(\mathbb R_+)$ be a Banach algebra of bounded Radon measures on $\mathbb R_+.$
It will be convenient to work with the image of ${\rm M}(\mathbb R_+)$ under the Laplace transform.
Recall  that the {\em Laplace transform} of  $\mu \in {\rm M}(\mathbb R_+)$  is given by
\[ (\mathcal L\mu)(z) := \int_0^{\infty}e^{-sz} \, \mu(d{s}),
\qquad  z \in \mathbb C_+.
\]
If
\begin{eqnarray*}
{\rm A}^1_+(\mathbb C_+) &:=& \{ \mathcal L\mu : \mu \in {\rm M}(\mathbb R_+)\}\\
\|\mathcal L \mu\|_{{\rm A}^1_+(\mathbb C_+)} &:=& \|\mu\|_{{\rm M}(\mathbb R_+)} = |{\mu}|(\mathbb R_+),
\end{eqnarray*}
where $|\mu|(\mathbb R_+)$ denotes the total variation of $\mu$ on
$\mathbb R_+,$ then it is easy to prove that $(\Wip(\mathbb C_+),\|\cdot\|_{\Wip(\mathbb
C_+)})$ is a commutative Banach algebra with pointwise
multiplication, and the Laplace transform
\[ \mathcal L : {\rm M}(\mathbb R_+) \mapsto {\rm A}^1_+(\mathbb C_+)
\]
is an isometric isomorphism.

The class of completely monotone functions can be considered as a generalization of
$ \Wip(\mathbb C_+)$ to the setting of Laplace transforms of, in
general, unbounded positive measures. Recall that  a function $f\in
\Ce^\infty(0, \infty)$ is said to be completely monotone if
\[
f(t)\geq 0\quad \mbox{and}\quad (-1)^n \frac{d^n
f(t)}{d{t}^n}\ge 0 \qquad \text{for all $n \in \mathbb N$ and $
t > 0$}.
\]
By the Bernstein theorem \cite[Theorem 1.4]{SchilSonVon2010} any
complete monotone function $f$ is the Laplace transform of a
unique positive Radon measure $\nu$ on $\mathbb R_+$, i. e. for
all $z>0$,
\[
f(z)=\int_{0}^{\infty} e^{-z t}\,\nu(dt).
\]
Note that any bounded completely monotone function $f$ belongs to $\Wip(\C_+)$ by
Fatou's lemma. Moreover, if we denote $f_\tau(\cdot)=f(\tau \cdot), \tau >0,$ then $f_\tau \in \Wip(\C_+)$ and
\begin{equation}\label{normcomp}
\|f_\tau\|_{\Wip(\C_+)}=f(0+), \qquad \tau >0.
\end{equation}

The notion of completely monotone function is closely related to
the notion of Bernstein function which will be central for the
studies in this paper. A positive function $\varphi \in \Ce^\infty(0,
\infty)$ is called {\em Bernstein function} if its derivative is
completely monotone. By \cite[Theorem 3.2]{SchilSonVon2010},
$\varphi$ is Bernstein if and only if there exist  $a, b\geq 0$
and a positive Radon measure $\mu$ on $(0,\infty)$ satisfying
\begin{equation*}
\int_{0+}^\infty\frac{s}{1+s}\,\mu(d{s})<\infty \label{mu}
\end{equation*}
such that
\begin{equation}\label{defBF}
\varphi(z)=a+bz+\int_{0+}^\infty (1-e^{-zs})\mu(d{s}), \qquad z>0.
\end{equation}
The formula \eqref{defBF} is called the Levy-Khintchine
representation of $\varphi.$ The triple $(a, b, \mu)$ is uniquely
determined by the corresponding Bernstein function $\varphi$ and
is called the Levi-Khintchine triple. Thus we will write
occasionally $\varphi \sim (a,b, \mu).$ Every Bernstein function
extends analytically to $\mathbb C_+$ and continuously to
$\overline{\mathbb C_+}.$ In
 the following a Bernstein function will be identified with its continuous extension to $\overline{\mathbb C_+}.$

The class of Bernstein functions can also be viewed in terms of
the algebra $\Wip(\mathbb C_+)$ as the following statement shows,
see \cite[Lemma 2.5]{GoHaTo12}.
\begin{lemma}\label{exthil}
Every Bernstein function $\varphi$ can be written in the form
\[ \varphi(z) = \psi_1(z) + z\, \psi_2(z), \qquad z \ge 0,
\]
where $\psi_1, \psi_2 \in \Wip(\C_+)$.
\end{lemma}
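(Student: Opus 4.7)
My plan is to start from the Lévy--Khintchine representation
\[
\varphi(z)=a+bz+\int_{0+}^{\infty}(1-e^{-zs})\,\mu(ds),
\]
and to split the integral at $s=1$, exploiting the two different size conditions implied by $\int_{0+}^{\infty}\frac{s}{1+s}\,\mu(ds)<\infty$: namely $\mu((1,\infty))<\infty$ and $\int_{(0,1]} s\,\mu(ds)<\infty$. The large-$s$ piece will be absorbed into $\psi_1$, the small-$s$ piece into $z\psi_2$, while $a$ is put into $\psi_1$ and $b$ into $\psi_2$.

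For the tail part, since $\mu$ is finite on $(1,\infty)$, I would write
\[
\int_{1}^{\infty}(1-e^{-zs})\,\mu(ds)=\mu((1,\infty))-\int_{1}^{\infty}e^{-zs}\,\mu(ds),
\]
and observe that the first term is a constant (the Laplace transform of a multiple of the Dirac mass at $0$) and the second is $\mathcal L\nu$ for the finite measure $\nu=\mu\restriction_{(1,\infty)}$; both therefore lie in $\Wip(\C_+)$. For the head part, the identity $1-e^{-zs}=z\int_{0}^{s}e^{-zr}\,dr$ together with Fubini (justified by $\int_{(0,1]}s\,\mu(ds)<\infty$) gives
\[
\int_{(0,1]}(1-e^{-zs})\,\mu(ds)=z\int_{0}^{1}e^{-zr}\,g(r)\,dr,\qquad g(r):=\mu((r,1]).
\]
Since $\int_{0}^{1}g(r)\,dr=\int_{(0,1]}r\,\mu(dr)<\infty$, the function $g\car_{(0,1)}$ is in $L^1(\R_+)$, hence defines a bounded measure and its Laplace transform belongs to $\Wip(\C_+)$.

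Putting everything together, the candidate decomposition is
\[
\psi_1(z):=a+\mu((1,\infty))-\int_{1}^{\infty}e^{-zs}\,\mu(ds),\qquad
\psi_2(z):=b+\int_{0}^{1}e^{-zr}\,\mu((r,1])\,dr,
\]
and a direct check gives $\varphi(z)=\psi_1(z)+z\psi_2(z)$ on $z\ge 0$ with $\psi_1,\psi_2\in\Wip(\C_+)$.

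The routine computations are the Fubini swap and adding up the four pieces; the only place where something real is used is the finiteness condition $\int \frac{s}{1+s}\,\mu(ds)<\infty$, which I would expect to be the main conceptual step, since it is precisely what allows the small-$s$ singularity of the Lévy measure to be absorbed into $z\cdot(\mathcal L\text{-function})$ rather than spoiling the boundedness of the underlying measure. Once that is observed, the splitting at $s=1$ is natural and the rest is bookkeeping.
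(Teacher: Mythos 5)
Your proof is correct and self-contained. The paper itself does not prove the lemma but cites an external reference, so there is no in-text argument to compare against; your splitting of the L\'evy measure at $s=1$, the identity $1-e^{-zs}=z\int_0^s e^{-zr}\,dr$, and the Fubini interchange (licensed by $\int_{(0,1]}s\,\mu(ds)<\infty$) do the job, and the four pieces indeed reassemble into $\varphi$. Two small remarks. First, you should note (as you implicitly use) that $g(r)=\mu((r,1])<\infty$ for each $r>0$, which follows from $\mu((r,1])\le r^{-1}\int_{(0,1]}s\,\mu(ds)$. Second, there is a more compact route that is likely closer to the cited source and that fits the Hille--Phillips framework even more directly: one shows that $\varphi/(1+\cdot)\in\Wip(\C_+)$ by computing, for $s>0$,
\[
\Bigl\|\frac{1-e^{-\cdot\, s}}{1+\cdot}\Bigr\|_{\Wip(\C_+)}
=\int_0^s e^{-t}\,dt+\int_s^\infty\bigl(e^{-(t-s)}-e^{-t}\bigr)\,dt
=2\bigl(1-e^{-s}\bigr)\le\frac{4s}{1+s},
\]
so that the L\'evy--Khintchine integral converges in $\Wip(\C_+)$-norm under the assumption $\int_{0+}^\infty\frac{s}{1+s}\,\mu(ds)<\infty$, and the terms $a/(1+z)$ and $bz/(1+z)$ lie in $\Wip(\C_+)$ trivially. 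One may then take $\psi_1=\psi_2=\varphi/(1+\cdot)$, since $\varphi(z)=(1+z)\cdot\varphi(z)/(1+z)$. Your argument is more elementary (no norm estimate needed) and gives an explicit decomposition tied to the L\'evy triple; the $\varphi/(1+z)$ route is shorter, shows $\psi_1$ and $\psi_2$ can be taken equal, and makes transparent why $(1+z)^{-1}$ serves as a regularizer for every Bernstein function in the extended HP calculus, which is how the lemma is actually used in the paper.
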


From the definition it follows that a Bernstein function $\varphi$  is increasing,  and if $\varphi \sim (a,b, \mu)$ then
\begin{equation}\label{ab}
a=\varphi(0)\quad \text{and} \quad b=\lim_{t \to
\infty}\frac{\varphi(t)}{t}.
\end{equation}

Since we will often deal with bounded Bernstein functions, we collect several simple
characterizations of such functions in
the following lemma.
\begin{lemma}\label{boundedbe}
Let $\varphi$ be a Bernstein function. Then the following
statements are equivalent.
\begin{itemize}
\item [(i)]$\varphi$ is bounded on $\mathbb R_+.$
\item [(ii)] $\varphi'
\in L^1(0,\infty).$
\item [(iii)] $\varphi$ has a Levy-Hintchine representation of the form
$(a,0,\mu)$ with $\mu$ being a bounded positive Radon measure on
$(0,\infty).$
\end{itemize}
\end{lemma}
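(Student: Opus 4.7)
The plan is to derive all equivalences directly from the Lévy–Khintchine representation \eqref{defBF}, using the cycle \us{(iii)} $\Rightarrow$ \us{(ii)} $\Rightarrow$ \us{(i)} $\Rightarrow$ \us{(iii)}. The formulas \eqref{ab} identifying $a = \varphi(0)$ and $b = \lim_{t\to\infty} \varphi(t)/t$ will be the main bridge between boundedness and vanishing of $b$, while Fubini's theorem applied to the derivative of $\varphi$ will handle the passage between integrability of $\varphi'$ and boundedness of $\mu$.

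For \us{(iii)} $\Rightarrow$ \us{(ii)}, I would differentiate \eqref{defBF} under the integral sign to get
\[
\varphi'(z) = b + \int_{0+}^\infty s\, e^{-zs}\,\mu(d s), \qquad z > 0,
\]
and then integrate in $z$ over $(0,\infty)$. Swapping the order of integration via Tonelli gives
\[
\int_0^\infty \varphi'(z)\,d z = b\cdot \infty + \int_{0+}^\infty \mu(d s),
\]
which is finite exactly when $b = 0$ and $\mu$ is a bounded measure, yielding $\|\varphi'\|_{L^1(0,\infty)} = \mu((0,\infty))$. The implication \us{(ii)} $\Rightarrow$ \us{(i)} is then immediate from $\varphi(z) = \varphi(0) + \int_0^z \varphi'(t)\,d t \le \varphi(0) + \|\varphi'\|_{L^1}$.

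For \us{(i)} $\Rightarrow$ \us{(iii)}, boundedness of $\varphi$ together with \eqref{ab} forces $b = 0$, so
\[
\varphi(z) = a + \int_{0+}^\infty (1-e^{-zs})\,\mu(d s).
\]
Since $s \mapsto 1-e^{-zs}$ increases to $1$ pointwise on $(0,\infty)$ as $z \to \infty$, the monotone convergence theorem yields
\[
\mu((0,\infty)) = \lim_{z\to\infty}\int_{0+}^\infty(1-e^{-zs})\,\mu(d s) \le \sup_{z>0}\varphi(z) - a < \infty,
\]
so $\mu$ is bounded and \us{(iii)} follows. The only minor subtlety — and the single point where one must be careful — is justifying differentiation under the integral and the application of Tonelli near $s = 0$; this is handled by the integrability condition $\int_{0+}^\infty \frac{s}{1+s}\mu(d s) < \infty$ built into the Lévy–Khintchine representation, which guarantees that $s e^{-zs}$ is $\mu$-integrable for each $z > 0$ and that all manipulations are legitimate.
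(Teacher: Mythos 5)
Your proof is correct and rests on the same core facts the paper invokes (positivity of $\varphi'$ plus the fundamental theorem of calculus for the \us{(i)}--\us{(ii)} link, and a Fatou/monotone-convergence limit in the L\'evy--Khintchine representation for the \us{(i)}--\us{(iii)} link), except that you prove \us{(i)}$\Leftrightarrow$\us{(iii)} directly where the paper simply cites \cite[Corollary 3.8]{SchilSonVon2010}. Organizing the argument as a cycle \us{(iii)}$\Rightarrow$\us{(ii)}$\Rightarrow$\us{(i)}$\Rightarrow$\us{(iii)} and computing $\int_0^\infty \varphi'(z)\,dz = \mu((0,\infty))$ via Tonelli is a clean, self-contained variant that also yields the explicit identity $\|\varphi'\|_{L^1(0,\infty)} = \mu((0,\infty))$ as a small bonus.
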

For the proof it suffices to note that the equivalence of (i) and (ii) follows from the
positivity of $\varphi'$ and Newton-Leibnitz formula, while
the equivalence of (i) and (iii) is proved in \cite[Corollary 3.8, (v)]{SchilSonVon2010}.
 (It is  a direct consequence of  Fatou's lemma and the Levy-Hintchine representation \eqref{defBF}.)

Moreover, from \eqref{defBF} it follows that if $\varphi$ is a
bounded Bernstein function then $b=0$ and
\[
\varphi(\infty)-\varphi(0):=\lim_{z
\to+\infty}\,\varphi(z)-a=\int_{0+}^\infty \mu(ds)<\infty,
\]
hence $\varphi\in \Wip(\C_+)$ and
\begin{equation}\label{normbern}
\|\varphi\|_{\Wip(\C_+)}=a+2\int_{0+}^\infty
\mu(ds)=2\varphi(\infty)-\varphi(0).
\end{equation}

It will be crucial for the sequel that if $f$ is completely
monotone and $\varphi$ is Bernstein function, then $f\circ
\varphi$ is completely monotone by \cite[Theorem
3.6]{SchilSonVon2010}. If, in addition, $f$ is bounded, then $f\circ
\varphi$ is bounded and completely monotone, and Fatou's theorem
yields the following estimate for the $\Wip(\mathbb C_+)$-norm of
$f\circ \varphi$:
\begin{equation}\label{Wip}
\|f\circ \varphi\|_{\Wip(\C_+)}=(f\circ \varphi)(0+) \le
\|f\|_{\Wip(\C_+)}.
\end{equation}
Note finally that $f$ and $\varphi$ are Bernstein functions then
$f \circ \varphi$ is Bernstein as well. For these properties of
compositions see \cite[Thereom 3.6 and Corollary
3.7]{SchilSonVon2010}.

One of the most important properties of Bernstein functions is
that their exponentials one-to-one correspond to convolution
semigroups of subprobability measures and the correspondence is
given by the Laplace transform. The following statement can be
found e.g. in \cite[Theorem 5.2]{SchilSonVon2010}. Recall that a
family of Radon measures $(\mu_t)_{t\ge 0}$ on $\mathbb R_+$ is
called a vaguely continuous convolution semigroup of
subprobability measures if $\mu_t(\mathbb R_+)\le 1$ for all $t\ge
0, \mu_{t+s}=\mu_t*\mu_s$ for all $t,s\ge 0$ and
vague-$\lim_{t\to0+}\mu_t=\delta_0,$ where $\delta_0$ stands for
the Dirac measure at zero.
\begin{thm}\label{subordf}
The function $\varphi$ is Bernstein if and only if there exists a
vaguely continuous semigroup $(\mu_t)_{t \ge 0}$ of subprobability
measures on $\mathbb R_+$ such that
\begin{equation}\label{lapber}
({\mathcal L} \mu_t)(z)=e^{-t\varphi(z)}, \qquad z \in \mathbb
C_+,
\end{equation}
for all $t \ge 0.$
\end{thm}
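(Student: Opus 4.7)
The plan is to prove both implications via Bernstein's theorem (recalled in the excerpt) together with the composition rule that $f\circ\varphi$ is completely monotone whenever $f$ is completely monotone and $\varphi$ is Bernstein.

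For the forward direction, given $\varphi$ Bernstein, the function $z\mapsto e^{-t\varphi(z)}$ is completely monotone as a composition of the completely monotone function $z \mapsto e^{-tz}$ with the Bernstein function $\varphi$, and is bounded by $e^{-t\varphi(0+)}\le 1$ since $\varphi\ge\varphi(0)\ge 0$. Bernstein's theorem then produces a unique positive Radon measure $\mu_t$ on $\mathbb R_+$ with $(\mathcal L\mu_t)(z)=e^{-t\varphi(z)}$, and letting $z\to 0+$ gives $\mu_t(\mathbb R_+)\le 1$, so $\mu_t$ is a subprobability measure. The semigroup identity $\mu_{t+s}=\mu_t*\mu_s$ is inherited from $e^{-(t+s)\varphi}=e^{-t\varphi}\cdot e^{-s\varphi}$ via the convolution theorem $\mathcal L(\mu_t*\mu_s)=\mathcal L\mu_t\cdot\mathcal L\mu_s$ together with the injectivity of $\mathcal L$ on ${\rm M}(\mathbb R_+)$. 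Vague continuity at $t=0$ follows from $(\mathcal L\mu_t)(z)\to 1=(\mathcal L\delta_0)(z)$ for every $z>0$, via the standard continuity theorem for Laplace transforms of subprobability measures.

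For the converse, given the semigroup with $(\mathcal L\mu_t)(z)=e^{-t\varphi(z)}$, the positivity $\varphi\ge 0$ is automatic from $\mathcal L\mu_t\le 1$. The idea is to exhibit $\varphi$ as a pointwise limit of explicit Bernstein functions. For $t>0$ put
\[
\varphi_t(z):=\frac{1-(\mathcal L\mu_t)(z)}{t}=\frac{1-\mu_t(\mathbb R_+)}{t}+\int_{0+}^\infty(1-e^{-zs})\,\frac{\mu_t(ds)}{t}.
\]
Comparison with the L\'evy-Khintchine representation \eqref{defBF} shows each $\varphi_t$ is Bernstein with triple $\bigl((1-\mu_t(\mathbb R_+))/t,\,0,\,\mu_t/t\bigr)$; the growth condition $\int_{0+}^\infty s/(1+s)\,\mu_t(ds)/t<\infty$ is trivial since $\mu_t$ is finite. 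A Taylor expansion $e^{-t\varphi(z)}=1-t\varphi(z)+O(t^2)$ shows $\varphi_t(z)\to\varphi(z)$ pointwise on $(0,\infty)$ as $t\to 0+$. It then remains to invoke the closedness of the Bernstein class under pointwise convergence on $(0,\infty)$.

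The main obstacle is this last closedness step. The cleanest route is to differentiate: each
\[
\varphi_t'(z)=\frac{1}{t}\int_{0+}^\infty s\,e^{-sz}\,\mu_t(ds)
\]
is a Laplace transform of a positive measure, hence completely monotone, and one verifies $\varphi_t'(z)\to\varphi'(z)$ as $t\to 0+$ (for instance by differentiating the identity $e^{-t\varphi(z)}=(\mathcal L\mu_t)(z)$ in $z$ and passing to the limit in $t$). The pointwise closedness of the completely monotone class — a consequence of Bernstein's theorem together with vague compactness of the representing measures, whose mass near $0$ and $\infty$ is controlled by the uniform bounds on $\varphi_t'$ on compact subsets of $(0,\infty)$ — then shows $\varphi'$ is completely monotone, i.e. $\varphi$ is Bernstein.
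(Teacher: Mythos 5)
The paper does not prove Theorem~\ref{subordf}; it is quoted directly from \cite[Theorem 5.2]{SchilSonVon2010}, so there is no in-paper argument to compare against. Judged on its own terms, your proof is essentially correct and follows a standard path. In the forward direction: $e^{-t\varphi}$ is completely monotone by the composition rule, bounded by $1$, so Bernstein's theorem yields subprobability measures $\mu_t$; the convolution property comes from injectivity of $\mathcal L$, and vague continuity at $0$ from the L\'evy continuity theorem. That is all fine. In the converse direction, the observation that
\[
\varphi_t(z)=\frac{1-(\mathcal L\mu_t)(z)}{t}
=\frac{1-\mu_t(\mathbb R_+)}{t}+\int_{0+}^\infty(1-e^{-zs})\,\frac{\mu_t(ds)}{t}
\]
is manifestly of L\'evy--Khintchine form with a finite L\'evy measure, so Bernstein, and that $\varphi_t\to\varphi$ pointwise, is the right idea, and your reduction of the closedness question to the CM class via $\varphi_t'(z)=\varphi'(z)e^{-t\varphi(z)}\to\varphi'(z)$ is clean (note that $\varphi=-t^{-1}\log\mathcal L\mu_t$ is automatically smooth on $(0,\infty)$ since $\mathcal L\mu_t>0$ there, which you should say, as this is what legitimizes differentiating $\varphi$). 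You should also justify, once, that $\varphi$ is well-defined, i.e.\ that $\mathcal L\mu_t(z)>0$ for $z>0$ and $t>0$; this follows from the semigroup law $\mathcal L\mu_t=(\mathcal L\mu_{t/2^n})^{2^n}$ combined with vague continuity at $0$, which forces $\mathcal L\mu_{t/2^n}(z)\to 1$. The final invocation of closedness of the CM class under pointwise limits (via vague compactness of the representing measures $t^{-1}s e^{-z_0 s}\mu_t(ds)$, whose total mass is $\varphi_t'(z_0)\le\varphi'(z_0)$) is correct but compressed; it deserves the one or two lines of Helly/tightness detail you gesture at, or an explicit citation to the relevant corollary in \cite{SchilSonVon2010}. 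With those small additions the argument is complete.
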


The  subclass of Bernstein functions introduced below will be fundamental for our studies.
\begin{defn} Let
\begin{equation}
\Phi:=\{\varphi \,\, \text{is Bernstein}: \, \, \varphi(0)=0,\quad
\varphi'(0+)=1,\quad |\varphi''(0+)|<\infty\}.\label{3cond}
\end{equation}
\end{defn}
\begin{remark}\label{eqCond}
If $\varphi$ has the Levy-Hintchine representation $(0,0,\mu)$
then the assumptions (\ref{3cond}) are equivalent to
\begin{equation}\label{Equiv}
\int_{0+}^\infty s\, \mu(ds)=1,\qquad \int_{0+}^\infty
s^2\, \mu(ds)<\infty.
\end{equation}
\end{remark}
It will be essential for us that the functions
$\varphi_1(z)=z/(z+1),$ $\varphi_2(z)=1-e^{-z}$ and
$\varphi_3(z)=\log(1+z)$ are Bernstein (see \cite[Chapter 15]{SchilSonVon2010}), and moreover
$\{\varphi_1,\varphi_2,\varphi_3\} \subset \Phi.$

An important subclass of completely monotone functions is formed by Stieltjes functions.
A function $f : (0, \infty) \to \R_+$ is called a {\em Stieltjes}
function if it can be written as
\begin{equation}\label{hpfc.e.stieltjes}
f(z) = \frac{a}{z} + b + \int_{0+}^\infty \frac{\nu(\ud{s})}{z +
s}, \qquad z > 0,
\end{equation}
where $a, b \ge 0$ and $\nu$ is a positive Radon measure on $(0,
\infty)$ satisfying
\[ \int_{0+}^{\infty} \frac{\nu(\ud{s})}{1 + s} < \infty.
\]
In this case, $\mu$ is called a {\em Stieltjes measure} and
\eqref{hpfc.e.stieltjes} is called the {\em Stieltjes
representation} for $f$, since such a representation is unique.
Moreover, every nonzero Stieltjes function $f$ is completely
monotone, and the functions $zf$ and $1/f$ are Bernstein, see
\cite[Chapters 6-7]{SchilSonVon2010} for more details. It follows
from the definition, that the functions $z^{-\alpha}, \alpha \in
[0,1],$ are Stieltjes.

We will also need a generalization of Stieltjes functions called
sometimes generalized Stieltjes functions. Recall that a function
$f:(0,\infty)\mapsto \mathbb R_+$ is {\it generalized Stieltjes}
of order $\alpha>0$ if it can be written as
\begin{equation}\label{stilrep}
f(z)= \frac{a}{z^{\alpha}} +b +\int_{0+}^\infty
\frac{\nu(ds)}{(z+s)^\alpha}, \qquad z>0,
\end{equation}
where $a,b\ge 0$ and $\nu$ is a positive Radon measure on
$(0,\infty)$ satisfying
\begin{equation*}
\int_{0+}^\infty\frac{\nu(ds)}{(1+s)^\alpha}<\infty.
\end{equation*}
Observe that if $f$ is generalized Stieltjes (of any positive order), then $f$ admits an (unique) analytic
extension to $\mathbb C \setminus (-\infty,0]$
which will be identified with $f$ and denoted by the same symbol.
The class of generalized Stieltjes functions of order $\alpha$ will be denoted by $\mathcal{S}_\alpha.$
In this terminology, Stieltjes functions constitute precisely  the class $\mathcal{S}_1$ of generalized
Stieltjes functions of order $1.$
For a thorough treatment of generalized Stieltjes functions one may consult  \cite{Karp}.

\subsection{Functional Calculus}\label{fcc}

Let $-A$ be the generator of a bounded $C_0$-semigroup
$(e^{-tA})_{t\ge 0}$ on a Banach space $X$. Recall that the mapping
\[ g = \mathcal L \mu  \quad
\mapsto \quad g(A) := \int_{\R_+} e^{-tA}\, \mu(\ud{t})
\]
(with a strong Bochner integral in the definition of $g(A)$)
defines a continuous algebra homomorphism from $\Wip(\C_+)$ into
the Banach space of bounded linear operators on $X$ satisfying
\begin{equation}\label{hille-pillips}
 \norm{g(A)} \le (\sup_{t \ge 0} \norm{e^{-tA}} ) \, \norm{g}_{\Wip(\mathbb C_+)},
\qquad g\in \Wip(\C_+).
\end{equation}
This homomorphism is called the {\em Hille--Phillips} (HP)
functional calculus for $A$, see \cite[Chapter XV]{HilPhil}. It
admits an extension to a class of functions larger than
$\Wip(\C_+)$. This extension is constructed in the following way.
If $f: \C_+ \to \C$ is holomorphic such that there exists  $e\in
\Wip(\C_+)$ with $ef \in \Wip(\C_+)$ and the operator $e(A)$ is
injective, then we set
\begin{eqnarray*}
\dom (f(A))&:=&\{x \in X: (ef)(A)x \in \ran(e(A)) \}\\
 f(A) &:=& e(A)^{-1} \, (ef)(A).
\end{eqnarray*}
In this case $f$ is called {\em regularizable}, and $e$ is called
a {\em regularizer} for $f$. Such a definition of $f(A)$ does not
depend on the regularizer $e$ and $f(A)$ is a closed
 operator on $X$. Moreover, the set of all
regularizable functions $f$ forms an algebra ${\mathcal A}$ (depending on $A$).
(See e.g. \cite[p. 4-5]{Ha06}.)
The mapping
\[ {\mathcal A} \ni f \tpfeil f(A)
\]
from ${\mathcal A}$ into the set of all closed operators on $X$ is
called the {\em extended Hille--Phillips (HP) functional calculus} for $A$.
The calculus possess
a number of natural properties, and we refer to
\cite[Chapter 1]{Ha06} for the most of them. In this paper, the next product rule will be important:
 if $f$ is regularizable and $g\in \Wip(\C_+)$,
then
\begin{equation}\label{hpfc.e.prod}
 g(A) f(A) \subseteq f(A) g(A) = (fg)(A),
\end{equation}
where we take the natural domain for a product
of operators. If $f \in \Wip (\mathbb C_+),$ then $f(A)$ is bounded and clearly $g(A)f(A)=(fg)(A).$

The following well-known spectral mapping (inclusion) theorem for
the HP functional calculus will also be crucial. See e.g.
\cite[Theorem 16.3.5]{HilPhil} or \cite[Theorem 2.2]{GoHaTo12} for
its proof.
\begin{thm}\label{mapping}
Let $f \in \Wip (\mathbb C_+)$ and  $-A$ be the generator of a bounded $C_0$-semigroup on a Banach space $X.$
Then
\begin{equation}\label{spmapping}
\{f(\lambda): \lambda \in \sigma(A)\} \subset \sigma(f(A)).
\end{equation}
\end{thm}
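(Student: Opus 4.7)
The plan is to verify the inclusion $\{f(\lambda):\lambda\in\sigma(A)\}\subset\sigma(f(A))$ by splitting $\lambda\in\sigma(A)$ into two cases according to whether it lies in the approximate point spectrum $\sigma_{\mathrm{ap}}(A)$ or in $\sigma(A)\setminus\sigma_{\mathrm{ap}}(A)$. The only spectral information about $A$ that I will use is that, since $-A$ generates a bounded $C_0$-semigroup, $\sigma(A)\subset\overline{\mathbb C_+}$, and hence $\re\lambda\ge 0$ for every $\lambda\in\sigma(A)$.

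Suppose first $\lambda\in\sigma_{\mathrm{ap}}(A)$, and take $x_n\in\dom(A)$ with $\|x_n\|=1$ and $(A-\lambda)x_n\to 0$. Writing $f=\mathcal L\nu$ with $\nu\in\mathrm M(\mathbb R_+)$, I would start from the variation-of-parameters identity
\[
e^{-tA}x_n - e^{-t\lambda}x_n \;=\; -\int_0^t e^{-sA}\,e^{-(t-s)\lambda}\,(A-\lambda)x_n\,ds,
\]
obtained by differentiating $s\mapsto e^{-sA}e^{-(t-s)\lambda}x_n$. Combined with $|e^{-(t-s)\lambda}|\le 1$ (from $\re\lambda\ge 0$), this yields the pointwise bound $\|e^{-tA}x_n-e^{-t\lambda}x_n\|\le Mt\,\|(A-\lambda)x_n\|$ with $M:=\sup_{t\ge 0}\|e^{-tA}\|$, together with the uniform envelope $\|e^{-tA}x_n-e^{-t\lambda}x_n\|\le M+1$. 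Dominated convergence applied to the Bochner representation
\[
f(A)x_n - f(\lambda)x_n \;=\; \int_0^\infty (e^{-tA}-e^{-t\lambda})x_n\,\nu(dt)
\]
then forces $(f(A)-f(\lambda)I)x_n\to 0$, and since $\|x_n\|=1$ we conclude $f(\lambda)\in\sigma_{\mathrm{ap}}(f(A))\subset\sigma(f(A))$.

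Suppose next $\lambda\in\sigma(A)\setminus\sigma_{\mathrm{ap}}(A)$. Then $A-\lambda$ is bounded below, so $\ran(A-\lambda)$ is closed and, since $\lambda\in\sigma(A)$, properly contained in $X$. Hahn--Banach produces $x^*\in X^*\setminus\{0\}$ vanishing on $\ran(A-\lambda)$, equivalently $x^*(Ay)=\lambda\,x^*(y)$ for all $y\in\dom(A)$. Fixing $y\in\dom(A)$ and differentiating $\psi(t):=x^*(e^{-tA}y)-e^{-t\lambda}x^*(y)$, using $e^{-tA}y\in\dom(A)$, I get $\psi'(t)=-\lambda\psi(t)$ with $\psi(0)=0$, hence $\psi\equiv 0$; density of $\dom(A)$ combined with continuity of $e^{-tA}$ extends this to all $y\in X$. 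Dualizing the Bochner integral $f(A)y=\int_0^\infty e^{-tA}y\,\nu(dt)$ by applying $x^*$ inside and using the eigenvalue relation then yields $(f(A))^*x^*=f(\lambda)x^*$, so $f(\lambda)\in\sigma_p((f(A))^*)\subset\sigma(f(A))$.

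The only delicate point is the second case, which implicitly invokes the adjoint semigroup $e^{-tA^*}$ that in general is merely weak-$*$ continuous. I sidestep any sun-dual machinery by working directly with the scalar function $\psi$ and transporting the eigenvalue relation to $f(A)^*$ only through the Bochner integral defining $f(A)$; since $x^*$ is bounded linear and the integral converges strongly, no further continuity of $e^{-tA^*}$ is needed.
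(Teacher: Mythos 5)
Your proof is correct, and it is worth pointing out that the paper itself does not prove Theorem \ref{mapping}: it merely cites \cite[Theorem 16.3.5]{HilPhil} and \cite[Theorem 2.2]{GoHaTo12}. So there is no ``paper's own proof'' to compare against line by line; you have supplied a self-contained argument where the paper deliberately deferred.

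Your strategy --- split $\sigma(A)$ into $\sigma_{\mathrm{ap}}(A)$ and its complement in $\sigma(A)$, handle the approximate eigenvalues by the variation-of-constants estimate $\|e^{-tA}x_n-e^{-t\lambda}x_n\|\le M t\,\|(A-\lambda)x_n\|$ fed into the Bochner representation of $f(A)$, and handle the residual part via the Hahn--Banach annihilator of $\ran(A-\lambda)$ and the scalar ODE $\psi'=-\lambda\psi$ --- is the standard route to this spectral inclusion, and it is essentially how the result is obtained in the cited sources. All of the small technical points that could be delicate are handled correctly: the bound $|e^{-(t-s)\lambda}|\le 1$ uses $\sigma(A)\subset\overline{\mathbb{C}_+}$, which indeed follows from boundedness of the semigroup; the dominated convergence step has the required uniform envelope $M+1$ integrable against the finite total variation $|\nu|$; in the second case, bounded-below plus closedness of $A$ gives closed range, so $\lambda\in\sigma(A)$ forces $\ran(A-\lambda)\neq X$; and you correctly sidestep any need for strong continuity of $(e^{-tA})^*$ by applying $x^*$ directly to the strong Bochner integral defining $f(A)$. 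I see no gap.
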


Bernstein functions are, in fact,  part of the extended
HP functional calculus. Indeed, as a consequence of Lemma
\ref{exthil}, we obtain that every Bernstein function is
regularizable by any of the functions  $e_{\lambda}(z)=(\lambda
+z)^{-1}$, $\re \lambda
> 0$.
Moreover, the next operator-valued analogue of \eqref{defBF}
holds,
 see e.g. \cite[Corollary 2.6]{GoHaTo12} and \cite[Corollary 12.21]{SchilSonVon2010}.
\begin{thm}\label{hpfc.c.bf-fc}
Let $-A$ be the generator of a bounded $C_0$-semigroup
$(e^{-tA})_{t\ge 0}$ on $X$, and let $\varphi \sim (a,b,\mu)$ be a
Bernstein function. Then $\varphi(A)$ is defined in the extended
HP functional calculus. Moreover, $\dom(A) \subseteq
\dom(\varphi(A))$ and
\begin{equation}\label{phillips}
\varphi(A)x = ax + bAx + \int_{0+}^\infty (1 - e^{-tA})x \,
\mu(d{t})
\end{equation}
for each $x\in \dom(A),$ and $\dom(A)$ is a core for $\varphi(A)$.
\end{thm}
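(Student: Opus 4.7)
The plan is to use the regularizer $e_\lambda(z) := (\lambda + z)^{-1}$ with some $\lambda > 0$, for which $e_\lambda(A) = (\lambda+A)^{-1}$ is injective. By Lemma \ref{exthil} one can write $\varphi = \psi_1 + z\psi_2$ with $\psi_1, \psi_2 \in \Wip(\C_+)$, and since $ze_\lambda = 1 - \lambda e_\lambda \in \Wip(\C_+)$, the product
\[
e_\lambda \varphi = e_\lambda \psi_1 + \psi_2 - \lambda e_\lambda \psi_2
\]
belongs to $\Wip(\C_+)$. Hence $\varphi$ is regularizable and $\varphi(A)$ is defined in the extended HP calculus via $\varphi(A) = (\lambda+A)\,(e_\lambda \varphi)(A)$ on the domain $\{x \in X : (e_\lambda\varphi)(A)x \in \dom(A)\}$.

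To identify $\varphi(A)x$ for $x \in \dom(A)$ I would exploit the Levy--Khintchine representation and write
\[
e_\lambda(z) \varphi(z) = a \, e_\lambda(z) + b\, z e_\lambda(z) + \int_{0+}^\infty \frac{1-e^{-sz}}{\lambda+z}\, \mu(ds).
\]
The crucial step is to show that the last integral converges absolutely in the Banach algebra $\Wip(\C_+)$. To this end I would establish the estimate
\[
\Bigl\|\frac{1-e^{-sz}}{\lambda+z}\Bigr\|_{\Wip(\C_+)} \le C_\lambda \, \frac{s}{1+s},
\]
by combining the submultiplicative bound $\le 2/\lambda$ with the identity $(1-e^{-sz})/(\lambda+z) = \int_0^s z e^{-uz}/(\lambda+z) \, du$ (yielding $\le 2s$). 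Since $\int_{0+}^\infty s/(1+s) \, \mu(ds) < \infty$ for any Bernstein function, absolute convergence follows, and the continuity of the HP calculus permits termwise application to produce the operator identity
\[
(e_\lambda \varphi)(A) = a(\lambda+A)^{-1} + bA(\lambda+A)^{-1} + \int_{0+}^\infty (I - e^{-sA})(\lambda+A)^{-1}\, \mu(ds),
\]
with the last integral understood in the Bochner sense in $\mathcal L(X)$.

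Applied to $x \in \dom(A)$, the identity $A(\lambda+A)^{-1} x = (\lambda+A)^{-1} Ax$ together with the convergence of $\int_{0+}^\infty (I-e^{-sA}) x\, \mu(ds)$ in $X$ (which follows from $\|(I-e^{-sA})x\| \le M s \|Ax\|$ for small $s$ and $\le (1+M)\|x\|$ in general) gives
\[
(e_\lambda \varphi)(A) x = (\lambda+A)^{-1} \Bigl( ax + bAx + \int_{0+}^\infty (I-e^{-sA}) x\, \mu(ds) \Bigr).
\]
Thus $(e_\lambda \varphi)(A) x \in \ran((\lambda+A)^{-1}) = \dom(A)$, so $x \in \dom(\varphi(A))$, and applying $(\lambda+A)$ yields the claimed formula for $\varphi(A)x$.

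For the core property, I would use the Yosida approximation $x_n := n(n+A)^{-1} x \in \dom(A)$ for any $x \in \dom(\varphi(A))$. Clearly $x_n \to x$ in $X$ since $-A$ generates a bounded $C_0$-semigroup. The product rule \eqref{hpfc.e.prod} applied to $g(z) = n/(n+z) \in \Wip(\C_+)$ and the regularizable $\varphi$ gives
\[
\varphi(A)\, n(n+A)^{-1} = n(n+A)^{-1}\, \varphi(A) \quad \text{on } \dom(\varphi(A)),
\]
so $\varphi(A) x_n = n(n+A)^{-1}\varphi(A)x \to \varphi(A)x$ as $n \to \infty$, proving that $\dom(A)$ is a core for $\varphi(A)$. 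The main technical obstacle is the $\Wip(\C_+)$-norm bound on $(1-e^{-sz})/(\lambda+z)$ with the correct decay in $s$; everything else is routine within the extended HP functional calculus and the theory of Bochner integrals.
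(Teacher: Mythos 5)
The paper does not prove Theorem~\ref{hpfc.c.bf-fc}; it cites \cite[Corollary 2.6]{GoHaTo12} and \cite[Corollary 12.21]{SchilSonVon2010}. Your reconstruction is essentially the standard argument from those sources, and the overall plan — regularize by $e_\lambda$, use the Levy--Khintchine representation to express $(e_\lambda\varphi)(A)$ as an absolutely convergent $\Wip(\C_+)$-valued Bochner integral, restrict to $\dom(A)$, and establish the core property via the Yosida approximants together with the product rule \eqref{hpfc.e.prod} — is correct and complete.

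One technical step deserves care. You justify $\bigl\|(1-e^{-sz})/(\lambda+z)\bigr\|_{\Wip(\C_+)} \le 2s$ by Bochner-integrating the identity $(1-e^{-sz})/(\lambda+z)=\int_0^s z e^{-uz}/(\lambda+z)\,\diff{u}$ in the algebra $\Wip(\C_+)$. But the integrand corresponds to the measure $\delta_u - \lambda e^{-\lambda(t-u)}\chi_{\{t>u\}}\,\diff{t}$, and the map $u\mapsto\delta_u$ is not $M(\R_+)$-norm continuous (in fact $\|\delta_u-\delta_{u'}\|=2$ for $u\ne u'$), so the family is not Bochner measurable as stated. The estimate itself is correct and is obtained more cleanly by computing the inverse Laplace transform directly: $(1-e^{-sz})/(\lambda+z)$ is the Laplace transform of $h_s(t)=e^{-\lambda t}\chi_{(0,\infty)}(t) - e^{-\lambda(t-s)}\chi_{(s,\infty)}(t)$, an $L^1$ function with
\[
\|h_s\|_{L^1(\R_+)} = \frac{2\bigl(1-e^{-\lambda s}\bigr)}{\lambda}
\le \min\!\left(\frac{2}{\lambda},\,2s\right),
\]
which yields the desired $C_\lambda\,\frac{s}{1+s}$ bound. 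With this replacement, the remainder of your argument — the Bochner integral $\int_{0+}^\infty(I-e^{-sA})(\lambda+A)^{-1}\,\mu(\diff{s})$ in $\Lin(X)$, the pull-out of $(\lambda+A)^{-1}$ on $\dom(A)$ using $\|(I-e^{-sA})x\|\le\min(Ms\|Ax\|,(1+M)\|x\|)$, and the core argument $\varphi(A)n(n+A)^{-1}x = n(n+A)^{-1}\varphi(A)x \to \varphi(A)x$ for $x\in\dom(\varphi(A))$ via \eqref{hpfc.e.prod} — is sound.
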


Similarly, there is the next operator-valued counterpart of the
representation \eqref{stilrep} for Stieltjes functions, see e.g.
\cite[Theorem 2.5,{\it (ii)}]{GoTo}.
\begin{prop}\label{resolventst}
Let  $-A$ be the generator of a bounded $C_0$-semigroup on a Banach space $X$.
 If $f \sim (0,0,\mu)$ is  a Stieltjes function and $A$ has dense range, then
$f$ belongs to the extended HP calculus and
\begin{equation}\label{stiltcalc}
f(A)x=\int_{0+}^{\infty}(s+A)^{-1}x \,\mu(ds)
\end{equation}
for every $x \in \ran (A).$ Moreover, $\ran(A)$ is a core for
$f(A).$
\end{prop}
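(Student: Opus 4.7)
My plan is to verify that $f$ belongs to the extended HP functional calculus with regularizer $e(z) := z/(1+z) \in \Wip(\C_+)$, to establish the integral formula on $\ran(A)$, and to derive the core property via a standard approximation. The key ingredient for regularizability is Lemma~\ref{exthil} (which immediately delivers $ef \in \Wip(\C_+)$) together with a mean-ergodic argument for the injectivity of $e(A)$; the main obstacle will be a Fubini argument at the level of Laplace-preimage measures that identifies $(ef)(A)x$ with an $s$-integral of bounded-operator actions.

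For regularizability, since $f$ is Stieltjes the function $zf$ is Bernstein, and Lemma~\ref{exthil} applied to $zf$ produces $\psi_1,\psi_2\in \Wip(\C_+)$ with $zf(z) = \psi_1(z) + z\psi_2(z)$, whence
\[
e(z)f(z) = \frac{\psi_1(z)}{1+z} + \frac{z}{1+z}\,\psi_2(z) \in \Wip(\C_+)
\]
as a sum of products in the Banach algebra $\Wip(\C_+)$. To see that $e(A) = A(1+A)^{-1}$ is injective, I would note $\ker e(A) = \ker A$ and invoke the Abel identity: for $x \in \ker A$ one has $\lambda(\lambda+A)^{-1}x = x$ identically, while for $x=Ay$ the resolvent identity gives $\lambda(\lambda+A)^{-1}Ay = \lambda y - \lambda^2(\lambda+A)^{-1}y \to 0$, and density plus $\|\lambda(\lambda+A)^{-1}\| \le M$ extends this to all of $\cls{\ran}(A)$. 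Hence $\ker A \cap \cls{\ran}(A) = \{0\}$, and the hypothesis $\cls{\ran}(A) = X$ forces $\ker A = \{0\}$, so $e$ regularizes $f$.

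For the integral formula, take $x = Ay$ with $y \in \dom(A)$. The identity $(s+A)^{-1}x = y - s(s+A)^{-1}y$ together with $\sup_{s>0}\|s(s+A)^{-1}\| \le M$ gives $\|(s+A)^{-1}x\| \le C_y/(1+s)$, so under the Stieltjes condition $\int \mu(ds)/(1+s) < \infty$ the Bochner integral
\[
I(x) := \int_{0+}^\infty (s+A)^{-1}x\,\mu(ds)
\]
converges. Commuting the bounded operator $e(A)$ past the integral sign yields $e(A)I(x) = \int_{0+}^\infty g_s(A)x\,\mu(ds)$ with $g_s(z) := z/((1+z)(z+s)) \in \Wip(\C_+)$. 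A partial-fraction decomposition gives the uniform bound $\|g_s\|_{\Wip(\C_+)} \le C/(1+s)$, so under $\int \mu(ds)/(1+s) < \infty$ a Fubini argument at the level of the Laplace-preimage measures lifts the pointwise identity $(ef)(z) = \int_{0+}^\infty g_s(z)\,\mu(ds)$ to $(ef)(A)x = \int_{0+}^\infty g_s(A)x\,\mu(ds) = e(A)I(x)$. Injectivity of $e(A)$ then gives $f(A)x = I(x)$.

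Finally, for the core property, for $x \in \dom f(A)$ I would set $x_n := A(1/n + A)^{-1}x \in \ran(A)$; writing $x_n = x - (1/n)(1/n+A)^{-1}x$ and using the Abel identity from the first step (with $\lambda = 1/n$, applied on $X = \cls{\ran}(A)$) gives $x_n \to x$. Since $z/(1/n+z) \in \Wip(\C_+)$, the product rule \eqref{hpfc.e.prod} yields $f(A)x_n = A(1/n+A)^{-1} f(A)x$, which converges to $f(A)x$ by the same Abel identity applied to $f(A)x \in X$. Hence $\ran(A)$ is a core for $f(A)$.
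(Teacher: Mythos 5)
The paper does not prove Proposition~\ref{resolventst} in-house; it defers to \cite[Theorem~2.5,~(ii)]{GoTo}, so there is no in-paper argument to compare line by line. Your proof strategy is sound and closely mirrors the techniques the paper actually uses elsewhere (Lemma~\ref{exthil} for regularizability, the mean-ergodic argument for injectivity, and the Bochner-integral-in-$\Wip(\C_+)$ device with point-evaluation separation exactly as in Theorem~\ref{FP}). The structure — regularize with $e(z)=z/(1+z)$, reduce to $(ef)(A)x=e(A)\int_{0+}^\infty(s+A)^{-1}x\,\mu(ds)$, peel off the injective $e(A)$, and establish the core property via the Abel limit $\tfrac1n(\tfrac1n+A)^{-1}x\to 0$ on $\overline{\ran}(A)$ — is correct.

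One step is justified incorrectly, though the conclusion is salvageable. You claim that a partial-fraction decomposition of $g_s(z)=z/((1+z)(z+s))$ yields $\|g_s\|_{\Wip(\C_+)}\le C/(1+s)$. It does not: the decomposition $g_s(z)=\tfrac{1}{1-s}(1+z)^{-1}-\tfrac{s}{1-s}(z+s)^{-1}$ has coefficients blowing up at $s=1$, giving only $\|g_s\|_{\Wip(\C_+)}\le 2/|1-s|$. The bound you want is instead obtained by two factorizations, exactly in the spirit of the estimate \eqref{12a} in Theorem~\ref{FP}: writing $g_s=v_{1,1}u_{1,s}$ gives $\|g_s\|_{\Wip(\C_+)}\le \|v_{1,1}\|\,\|u_{1,s}\|\le 2$, while $g_s=u_{1,1}v_{1,s}$ gives $\|g_s\|_{\Wip(\C_+)}\le 2/s$; taking the minimum and using $\min(1,1/s)\le 2/(1+s)$ gives $\|g_s\|_{\Wip(\C_+)}\le 4/(1+s)$. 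A parallel remark applies to your bound $\|(s+A)^{-1}Ay\|\le C_y/(1+s)$: the identity $(s+A)^{-1}Ay=y-s(s+A)^{-1}y$ alone only yields the $s$-uniform bound $(1+M)\|y\|$; you must combine it with $\|(s+A)^{-1}\|\le M/s$ and then take a minimum, as above. With these two corrections, the proof goes through.
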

It would be instructive to recall that, by the mean ergodic theorem, $A$ is injective if it has dense range.
Thus, Proposition \ref{resolventst} requires, in particular, the injectivity of $A.$

Finally, we note that if $-A$ generates a bounded $C_0$-semigroup
on $X$, then for any Bernstein function $\varphi$ the operator
$-\varphi(A)$ generates a bounded $C_0$-semigroup on $X$ as well
and the latter semigroup can be represented in terms of $\varphi$
and $(e^{-tA})_{t \ge 0}$ as the formula \eqref{lapber} suggests,
see e.g. \cite[Theorem 12.6]{SchilSonVon2010} which we quote below.
\begin{thm}\label{subord}
Let $-A$ be the generator of a bounded $C_0$-semigroup
$(e^{-tA})_{t \ge 0}$ on a Banach space $X$, and let $\varphi$ be
a Bernstein function with the corresponding vaguely continuous
convolution semigroup $(\mu_t)_{t\ge 0}$  of subprobability
measures on $\mathbb R_+.$ Then the formula
\begin{equation}
e^{-t\varphi(A)}:=\int_{0}^{\infty} e^{-sA}\, \mu_t(ds), \qquad t
\ge 0,
\end{equation}
defines a bounded $C_0$-semigroup $(e^{-t\varphi(A)})_{t \ge 0}$
on $X$ with generator $-\varphi(A),$ where $\varphi(A)$ is given by
\eqref{phillips} on $\dom(A)$.
\end{thm}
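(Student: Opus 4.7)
The plan is to define $T(t)$ by the stated formula, verify the $C_0$-semigroup axioms directly from the convolution semigroup structure of $(\mu_t)_{t\ge 0}$, and then identify the generator by comparing resolvents through the Hille--Phillips calculus.

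\textbf{Step 1 (Well-definedness and boundedness).} First, I would check that the Bochner integral $T(t)x:=\int_{0}^{\infty}e^{-sA}x\,\mu_t(ds)$ is well-defined for each $x\in X$: the integrand is strongly continuous and uniformly bounded by $M\|x\|$, while $\mu_t$ is a finite measure with total mass $\mu_t(\mathbb R_+)=(\mathcal L\mu_t)(0+)=e^{-t\varphi(0+)}\le 1$ by \eqref{lapber}. This immediately yields $\|T(t)\|\le M$.

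\textbf{Step 2 (Semigroup property and strong continuity).} The identity $T(t+s)=T(t)T(s)$ follows from $\mu_{t+s}=\mu_t\ast\mu_s$ via Fubini and translation invariance of $(e^{-rA})_{r \ge 0}$. Since $\mu_0=\delta_0$, one has $T(0)=I$. For strong continuity at $0$, I would first combine the vague convergence $\mu_t\to\delta_0$ with $\mu_t(\mathbb R_+)=e^{-ta}\to 1$ to upgrade to tight weak convergence on $\mathbb R_+$. Then, writing
\[
T(t)x-x=\int_{0}^{\infty}(e^{-sA}x-x)\,\mu_t(ds)+(\mu_t(\mathbb R_+)-1)x,
\]
I would split the integral at some $R>0$, use uniform continuity of $s\mapsto e^{-sA}x$ on $[0,R]$, and dominate the tail by tightness and the bound $2M\|x\|$. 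Together with Step 1, this gives a bounded $C_0$-semigroup $(T(t))_{t\ge 0}$; let $B$ denote its generator.

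\textbf{Step 3 (Identifying $B=-\varphi(A)$ via resolvents).} For $\lambda>0$, I would compute
\[
(\lambda I-B)^{-1}x=\int_{0}^{\infty}e^{-\lambda t}T(t)x\,dt=\int_{0}^{\infty}e^{-\lambda t}\int_{0}^{\infty}e^{-sA}x\,\mu_t(ds)\,dt.
\]
Interchanging the order of integration (justified by the $M\|x\|\cdot e^{-\lambda t}$ pointwise bound), the result is $\int_{0}^{\infty}e^{-sA}x\,\nu_\lambda(ds)$, where $\nu_\lambda\in\mathrm{M}(\mathbb R_+)$ is defined by $\nu_\lambda(E)=\int_{0}^{\infty}e^{-\lambda t}\mu_t(E)\,dt$. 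Its Laplace transform computes to
\[
(\mathcal L\nu_\lambda)(z)=\int_{0}^{\infty}e^{-\lambda t}(\mathcal L\mu_t)(z)\,dt=\int_{0}^{\infty}e^{-(\lambda+\varphi(z))t}\,dt=\frac{1}{\lambda+\varphi(z)},
\]
so by definition of the HP calculus, $(\lambda I-B)^{-1}=[1/(\lambda+\varphi)](A)$.

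\textbf{Step 4 (Closing the identification).} The function $1/(\lambda+\varphi)$ is bounded and completely monotone, hence lies in $\Wip(\mathbb C_+)$; the function $\lambda+\varphi$ is regularizable in the extended HP calculus by Lemma~\ref{exthil}. The product rule \eqref{hpfc.e.prod} applied to $(\lambda+\varphi)\cdot[1/(\lambda+\varphi)]=1$ then gives
\[
(\lambda I+\varphi(A))\,[1/(\lambda+\varphi)](A)=I,\qquad [1/(\lambda+\varphi)](A)\,(\lambda I+\varphi(A))\subseteq I,
\]
so $(\lambda I+\varphi(A))^{-1}=[1/(\lambda+\varphi)](A)=(\lambda I-B)^{-1}$. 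Thus $B=-\varphi(A)$, and the form of $\varphi(A)$ on $\dom(A)$ is precisely \eqref{phillips} by Theorem~\ref{hpfc.c.bf-fc}.

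\textbf{Main obstacles.} The delicate points are the Fubini manipulation underlying the definition of $\nu_\lambda$ (which requires verifying joint measurability of $(t,s)\mapsto e^{-sA}x$ with respect to $dt\otimes\mu_t(ds)$, most cleanly done by regularizing $\mu_t$ or integrating scalar-valued pairings $\langle e^{-sA}x,x^\ast\rangle$ first and invoking uniqueness), and the passage from the algebraic identity $(\lambda+\varphi)\cdot 1/(\lambda+\varphi)=1$ in $\mathcal A$ to the operator identity showing bijectivity of $\lambda I+\varphi(A)$. The strong continuity argument, although standard, also requires some care to correctly turn vague into weak convergence using the mass bound $e^{-ta}\to 1$.
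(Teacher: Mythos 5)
Your proof is correct, but note that the paper does not prove Theorem~\ref{subord} at all: the statement is quoted verbatim from \cite[Theorem 12.6]{SchilSonVon2010} (Phillips' subordination theorem), so there is no ``paper's proof'' to compare against. What is worth comparing is your route versus the textbook one. The standard argument (as in \cite{SchilSonVon2010}) proves that $(T(t))_{t\ge 0}$ is a bounded $C_0$-semigroup essentially as in your Steps 1--2, but then identifies the generator directly: for $x\in\dom(A)$ one differentiates $T(t)x$ at $t=0+$ under the integral sign using the L\'evy--Khintchine representation of $\varphi$ to arrive at \eqref{phillips}, and then shows $\dom(A)$ is a core. Your Steps 3--4 replace that with a resolvent computation: Laplace-transform the orbit, recognize the result as $[1/(\lambda+\varphi)](A)$ in the Hille--Phillips calculus (using that $1/(\lambda+\varphi)$ is bounded and completely monotone, hence in $\Wip(\C_+)$), and then invert $\lambda I+\varphi(A)$ via Lemma~\ref{exthil} and the product rule \eqref{hpfc.e.prod}. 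This is cleaner in that it avoids producing the L\'evy--Khintchine integral for the generator on $\dom(A)$ by hand and instead lets the resolvent identity $(\lambda+\varphi)\cdot 1/(\lambda+\varphi)=1$ do the work; it is also very much in the spirit of this paper, which works with the extended HP calculus throughout. The cost is a heavier reliance on the abstract calculus (regularizability of $\lambda+\varphi$, additivity $(\lambda+\varphi)(A)=\lambda I+\varphi(A)$, the product rule) instead of elementary integral estimates. The Fubini point you flag is genuine but routine: vague continuity of $t\mapsto\mu_t$ gives Borel measurability of $t\mapsto\mu_t(E)$, which turns $e^{-\lambda t}\,dt\otimes\mu_t(ds)$ into a well-defined finite measure on $\R_+^2$, and joint strong measurability of $(t,s)\mapsto e^{-sA}x$ follows from continuity; the scalar-pairing-plus-uniqueness fallback you mention would also work.
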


\section{Interpolation estimates in $\Wip(\C_+)$}

We start with with a lemma providing estimates for the
$\Wip(\C_+)$-norms of a pair of functions used in the proof of our
interpolation principle for $\Wip(\C_+),$ Theorem \ref{FP} below.
\begin{lemma}\label{Propbeta}
Let
\begin{equation}\label{betaW}
u_{\beta,\tau}(z):=\left(\frac{z}{z+\tau}\right)^\beta, \quad v_{\beta,\tau}(z):=\frac{1}{(z+\tau)^\beta}, \qquad z  \in \mathbb C_+,
\end{equation}
for  $\beta>0$ and $\tau>0.$
Then $u_{\beta,\tau}, v_{\beta, \tau} \in \Wip(\C_+)$ for all $\beta,\tau>0$ and
\begin{align}
2^{\beta+1} \ge \|u_{\beta,\tau}\|_{\Wip(\C_+)}\ge& 1 ,\label{inequ} \\
 \|v_{\beta,\tau}\|_{\Wip(\C_+)} =&\frac{1}{\tau^\beta}.
\end{align}
\end{lemma}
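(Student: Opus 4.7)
The plan splits naturally into the two functions and, for each, into the upper and lower bounds.

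For $v_{\beta,\tau}$, my approach is a direct identification via the classical Gamma integral
\[
\frac{1}{(z+\tau)^\beta} = \frac{1}{\Gamma(\beta)} \int_0^\infty t^{\beta-1} e^{-\tau t} e^{-zt}\, dt, \qquad z \in \C_+,
\]
which exhibits $v_{\beta,\tau}$ as the Laplace transform of the positive Gamma density of total mass $1/\tau^\beta$. Hence $v_{\beta,\tau} \in \Wip(\C_+)$ with $\|v_{\beta,\tau}\|_{\Wip(\C_+)} = 1/\tau^\beta$. Equivalently, $v_{\beta,\tau}$ is bounded completely monotone, so by \eqref{normcomp} its $\Wip$-norm equals $v_{\beta,\tau}(0+) = 1/\tau^\beta$.

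For the lower bound on $\|u_{\beta,\tau}\|_{\Wip(\C_+)}$, I plan to use that any $f = \mathcal{L}\mu \in \Wip(\C_+)$ satisfies $\sup_{z \in \C_+} |f(z)| \le \|\mu\|_{\mathrm{M}(\R_+)}$, together with the elementary observation $u_{\beta,\tau}(x) \to 1$ as $x \to +\infty$ along $\R_+$, which immediately gives $\|u_{\beta,\tau}\|_{\Wip(\C_+)} \ge 1$ once membership in $\Wip(\C_+)$ is established. For the upper bound, I would first settle the subcase $\beta \in (0,1]$ via the Bernstein calculus: since $z \mapsto z/(z+\tau)$ is a bounded Bernstein function (its derivative $\tau/(z+\tau)^2$ is completely monotone) and $z \mapsto z^\beta$ is Bernstein for $\beta \in (0,1]$, the composition $z \mapsto (z/(z+\tau))^\beta$ is again a bounded Bernstein function, with value $0$ at $z = 0$ and limit $1$ at infinity. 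Formula \eqref{normbern} then produces $\|u_{\beta,\tau}\|_{\Wip(\C_+)} = 2 \cdot 1 - 0 = 2 \le 2^{\beta+1}$.

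For $\beta > 1$, I would bootstrap from the previous case by a submultiplicativity argument. Writing $\beta = k + \gamma$ with $k \in \mathbb{N}$ and $\gamma \in (0, 1]$, the factorization
\[
u_{\beta,\tau}(z) = \left(\frac{z}{z+\tau}\right)^{k} \cdot \left(\frac{z}{z+\tau}\right)^{\gamma}
\]
combined with the Banach-algebra inequality in $\Wip(\C_+)$ and the case already settled (applied to each of the $k+1$ factors, each having $\Wip$-norm equal to $2$) yields $\|u_{\beta,\tau}\|_{\Wip(\C_+)} \le 2^{k+1} \le 2^{\beta+1}$, as required.

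I do not anticipate a serious obstacle, since both claims reduce to careful bookkeeping with known properties of Bernstein and completely monotone functions. The only genuinely non-trivial point is that $u_{\beta,\tau}$ need not be Bernstein for $\beta > 1$ (a direct check on $\beta = 2$ shows that $u_{\beta,\tau}'$ fails to be completely monotone), which is precisely why the factorization detour is needed rather than a one-shot application of \eqref{normbern}.
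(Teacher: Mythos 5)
Your proposal is correct and follows essentially the same route as the paper: the Gamma integral for $v_{\beta,\tau}$, the Bernstein-composition argument plus \eqref{normbern} for $\beta\in(0,1]$, submultiplicativity for $\beta>1$, and the limit at infinity for the lower bound. The only cosmetic difference is that you split $\beta=k+\gamma$ with $\gamma\in(0,1]$ whereas the paper uses $\alpha\in[0,1)$, which neatly sidesteps the $\alpha=0$ edge case; your side remark on why $u_{\beta,\tau}$ fails to be Bernstein for $\beta>1$ is accurate and motivates the detour well.
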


\begin{proof}
Let $\tau >0$ be fixed. Note that $ u_{1,\tau} $ is a
Bernstein function. If $\beta \in (0,1),$ then the function $z \mapsto
z^\beta$ is Bernstein, and $u_{\beta,\tau}=(u_{1,\tau})^\beta$ is
a  Bernstein function too as a composition of Bernstein functions. Moreover, it is bounded since $
u_{1,\tau} $ is bounded. Thus $u_{\beta,\tau}\in \Wip(\C_+)$  and
by \eqref{normbern},
\begin{equation}\label{betebeta}
\|u_{\beta,\tau}\|_{\Wip(\C_+)}=-u_{\beta,\tau}(0)+2u_{\beta,\tau}(\infty)=2,\quad \beta\in (0,1].
\end{equation}
If $\beta>1$ then
\[
\beta=n+\alpha,\qquad n\in  \N\cup \{0\},\quad \alpha \in [0,1),
\]
and
\begin{equation*}\label{Cn}
\|u_{\beta,\tau}\|_{\Wip(\C_+)}\le \|u_{n,\tau}\|_{\Wip(\C_+)}\|u_{\alpha,\tau}\|_{\Wip(\C_+)} \le 2^{n+1}\le 2^{\beta+1}.
\end{equation*}
Moreover,
 $$\|u_{\beta,\tau}\|_{\Wip(\C_+)} \ge \sup_{z \in \mathbb
C_+}|u_{\beta,\tau}(z)| \ge \lim_{z\to\infty}\,|u_{\beta,\tau}(z)|=1. $$
This gives the second estimate in \eqref{inequ}.

To prove the assertion about $v_{\beta,\tau},$ note that
\begin{equation}\label{Exp}
v_{\beta,\tau}(z) =\frac{1}{\Gamma(\beta)}\int_0^\infty
e^{-zs}e^{-\tau s} s^{\beta-1}\,ds, \qquad z \in \mathbb C_+,
\end{equation}
where $\Gamma$ is the gamma-function.
 Thus $v_{\beta,\tau}$ is a bounded completely monotone
function for all $\beta >0$ and  $\tau> 0.$ Hence
\begin{equation*}
\|v_{\beta,\tau}\|_{\Wip(\C_+)}=v_{\beta,\tau}(0+)=\frac{1}{\tau^\beta}.
\end{equation*}
\end{proof}
Define now
\begin{equation}\label{denote}
c_\beta:=\|u_{\beta,\tau}\|_{\Wip(\C_+)}
=\|u_{\beta,1}\|_{\Wip(\C_+)},\quad \beta>0.
\end{equation}
Note that the bounds for $c_\beta$ in \eqref{inequ} are far from
being optimal and better estimates requiring however much more
involved arguments are given in Appendix. For instance, since
\[
u_{2,1}(z)=\left(1-\frac{1}{z+1}\right)^2=
1+\int_0^\infty e^{-zs} e^{-s}(s-2)\,ds,
\]
it follows that
\begin{equation}\label{c2}
c_2=\|u_{2,1}\|_{\Wip(\C_+)}=1+\int_0^\infty  e^{-s} | s-2|\,ds
=2(1+e^{-2})\le \frac{5}{2}.
\end{equation}
On the other hand, \eqref{inequ} is completely sufficient for our purposes. The inequality \eqref{c2}
will be used in the next section.

Our arguments depend essentially on sharp estimates of $\|F
f\|_{\Wip(\C_+)}$ for $F\in \Wip(\C_+)$ and $f \in \mathcal{S}_\beta.$
The next result of independent interest yields  bounds for $\|F
f\|_{\Wip(\C_+)}$ in terms of $\|F\|_{\Wip(\C_+)}$ and
$\|z^{-\beta} F\|_{\Wip(\C_+)}$ thus allowing us to simplify
technical details and to make our presentation more transparent.

\begin{thm}\label{FP}
Let $F\in \Wip(\C_+)$ be such that $z^{-\beta}F \in \Wip(C_+)$ for
some $\beta>0$, and denote
\begin{equation}\label{condition}
\|F\|_{\Wip(\C_+)}=a \qquad \text{and} \qquad \|z^{-\beta}
F\|_{\Wip(\C_+)}=b.
\end{equation}
Then for any $f\in \mathcal{S}_\beta,$
\begin{equation}\label{res}
\|F f\|_{\Wip(\C_+)}\le  2^{\beta}   a f\left(\left({a}/{ b}\right)^{1/\beta}c_\beta^{-1/\beta}\right).
\end{equation}
\end{thm}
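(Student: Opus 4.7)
The strategy is to plug the Stieltjes representation of $f$ into $Ff$, estimate each resulting piece using two different factorizations of $F/(z+s)^\beta$, and then pick the break-point $s_0$ at which the two estimates balance. This is a concrete interpolation trick: the two hypotheses $\|F\|_{\Wip(\C_+)}=a$ and $\|z^{-\beta}F\|_{\Wip(\C_+)}=b$ encode endpoint information and the proof selects a scale at which they are commensurate.

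First, using \eqref{stilrep} for $f\in\mathcal{S}_\beta$ with triple $(a_0,b_0,\nu)$, I would write
\[
F(z)f(z)=a_0\,\frac{F(z)}{z^\beta}+b_0\,F(z)+\int_{0+}^\infty\frac{F(z)}{(z+s)^\beta}\,\nu(ds),
\]
where the first two terms lie in $\Wip(\C_+)$ by hypothesis with norms $a_0 b$ and $a b_0$. For the integrand I would exploit two factorizations: from $F/(z+s)^\beta=v_{\beta,s}\cdot F$ and Lemma~\ref{Propbeta} one gets $\|F/(z+s)^\beta\|_{\Wip(\C_+)}\le a/s^\beta$, while from $F/(z+s)^\beta=u_{\beta,s}\cdot(F/z^\beta)$ one gets $\|F/(z+s)^\beta\|_{\Wip(\C_+)}\le c_\beta b$. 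Taking the minimum,
\[
\left\|\frac{F}{(z+s)^\beta}\right\|_{\Wip(\C_+)}\le \min\!\left(c_\beta b,\,\frac{a}{s^\beta}\right).
\]

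Next, set $s_0:=(a/b)^{1/\beta}c_\beta^{-1/\beta}$, so that $s_0^\beta=a/(c_\beta b)$; this is exactly the point at which the two estimates in the minimum coincide. The key pointwise comparison is
\[
\min\!\left(c_\beta b,\,\frac{a}{s^\beta}\right)\le \frac{2^\beta a}{(s_0+s)^\beta},\qquad s>0,
\]
which I would verify by splitting at $s=s_0$: for $s\le s_0$ the minimum equals $c_\beta b=a/s_0^\beta$ and $(s_0+s)^\beta\le(2s_0)^\beta$; for $s>s_0$ the minimum equals $a/s^\beta$ and $(s_0+s)^\beta\le(2s)^\beta$. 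Integrating against $\nu$ yields
\[
\int_{0+}^\infty\left\|\frac{F}{(z+s)^\beta}\right\|_{\Wip(\C_+)}\nu(ds)\le 2^\beta a\int_{0+}^\infty\frac{\nu(ds)}{(s_0+s)^\beta};
\]
convergence of the $\Wip(\C_+)$-valued Bochner integral is guaranteed because the last integral is part of $f(s_0)<\infty$.

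Finally, combining everything and using $c_\beta\ge 1$ (Lemma~\ref{Propbeta}) together with $2^\beta c_\beta b\cdot a_0\ge b\,a_0$ and $2^\beta a\,b_0\ge a\,b_0$,
\[
\|Ff\|_{\Wip(\C_+)}\le b a_0+a b_0+2^\beta a\!\int_{0+}^\infty\!\frac{\nu(ds)}{(s_0+s)^\beta}\le 2^\beta a\!\left(\frac{a_0}{s_0^\beta}+b_0+\int_{0+}^\infty\!\frac{\nu(ds)}{(s_0+s)^\beta}\right)=2^\beta a\,f(s_0),
\]
which is \eqref{res}. The main obstacle, and the heart of the proof, is the choice of $s_0$ and the pointwise domination by $2^\beta a/(s_0+s)^\beta$; the rest is bookkeeping with the Stieltjes representation and Lemma~\ref{Propbeta}.
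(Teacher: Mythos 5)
Your proof is correct and follows essentially the same route as the paper's: decompose $Ff$ via the Stieltjes representation, estimate $\|F/(z+s)^\beta\|_{\Wip(\C_+)}$ two ways through the factorizations $v_{\beta,s}\cdot F$ and $u_{\beta,s}\cdot(z^{-\beta}F)$, combine via $\min(x^{-\beta},y^{-\beta})\le 2^\beta/(x+y)^\beta$ at the balance point $s_0$, and absorb the $a_0/z^\beta+b_0$ terms using $c_\beta\ge 1$. The only cosmetic differences are that the paper first treats the pure-integral case $f_0$ and then adds the affine terms, and it invokes the elementary inequality abstractly where you verify it directly by splitting at $s_0$; the paper is also slightly more explicit about why the $\Wip(\C_+)$-valued Bochner integral exists and represents $Ff_0$ (continuity of $\tau\mapsto Fv_{\beta,\tau}$ plus the fact that point evaluations separate $\Wip(\C_+)$).
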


\begin{proof}
Fix $\beta >0$ and
consider first $f_0\in
\mathcal{S}_\beta$ of the form
\begin{equation}\label{fnol}
f_0(z)=\int_{0+}^\infty \frac{\nu(d\tau)}{(z+\tau)^\beta}, \qquad
z \in \mathbb C_+,
\end{equation}
where $\mu$ is a Radon measure on $(0,\infty)$ satisfying
\begin{equation*}
\int_{0+}^\infty\frac{\nu(d\tau)}{(1+\tau)^\beta}<\infty.
\end{equation*}
 For $\tau >0$ let
$u_{\beta,\tau}$ and $v_{\beta,\tau}$ be the functions defined in
\eqref{betaW}. Then
\begin{equation}\label{repdelta}
F(z)f_0(z)= \int_{0+}^\infty v_{\beta,\tau}(z) F(z)\,\nu(d\tau).
\end{equation}

 We  use that
$u_{\beta,\tau}, v_{\beta, \tau} \in \Wip(\C_+)$ by Lemma \ref{Propbeta}.
First, note that
\[
\left\|v_{\beta,\tau} F\right\|_{\Wip(\C_+)}\le
\left\|v_{\beta,\tau}\right\|_{\Wip(\C_+)}\left\|
F\right\|_{\Wip(\C_+)}\le \frac{a}{\tau^\beta}.
\]
On the other hand,
\[
\left\|v_{\beta,\tau} F\right\|_{\Wip(\C_+)}
\le \|u_{\beta,\tau}\|_{\Wip(\C_+)}
\| z^{-\beta}\cdot F \|_{\Wip(\C_+)}
\le c_\beta b.
\]
Then using the elementary inequality
\begin{equation}\label{elementary}
\min\left(x^{-\beta}, y^{-\beta}\right)\le \frac{2^\beta}{(x+y)^\beta},\quad x,y>0,
\end{equation}
we obtain
\begin{eqnarray}\label{12a}
\left\|v_{\beta,\tau} F\right\|_{\Wip(\C_+)} &\le& a
\min\left(\tau^{-\beta}, c_\beta b/a\right)
 \\
&\le& \frac{2^\beta a}{((c_\beta b/a)^{-1/\beta}+\tau)^\beta}.
\notag
\end{eqnarray}
Observe further that, in view of \eqref{Exp}, $\tau \mapsto F
v_{\beta,\tau}$ is a continuous, ${\rm A}^1_+(\mathbb C_+)$-valued
function on $(0,\infty).$ Moreover, by \eqref{12a}, the function
$\tau \mapsto \|F v_{\beta,\tau} \|_{{\rm A}^1_+(\mathbb C_+)}$ is
$\nu$-integrable on $(0,\infty).$   Thus the ${\Wip(\C_+)}$-valued
Bochner integral
\begin{equation*}
\int_{0+}^\infty  F v_{\beta,\tau}\,\nu(d\tau)
\end{equation*}
is well-defined. Since point evaluations $f \mapsto f(z) (z \in
\mathbb C_+)$ are bounded linear functionals on $\Wip(\C_+),$ and
separate elements of $\Wip(\C_+),$ \eqref{repdelta} implies that
the integral coincides with $F f_0.$ Hence by \eqref{repdelta},
\eqref{12a}, and a standard  inequality for Bochner integrals we
obtain
\begin{equation}\label{FFF}
\|F f_0\|_{\Wip(\C_+)} \le \int_{0+}^\infty \|F v_{\beta,
\tau}\|_{\Wip(\C_+)}\,\nu(d\tau)\le
  2^{\beta}   a f_0\left(\left({a}/{ b}\right)^{1/\beta}c_\beta^{-1/\beta}\right).
\end{equation}
Let now $f \in S_\beta.$  Then  $f(z) = c_0 z^{-\beta}+c_1 + f_0,$ where $c_0,c_1 \ge 0$ and $f_0$ is given by \eqref{fnol}. Using \eqref{FFF} and and taking into account that $c_\beta \ge 1$, we have
\begin{eqnarray*}
\|Ff\|_{\Wip(\C_+)}&\le& c_0 b + c_1 a + 2^{\beta}   a f_0\left(\left({a}/{b}\right)^{1/\beta}c_\beta^{-1/\beta}\right)\\
&=& 2^{\beta} a \left(c_0 \frac{b}{2^\beta a}+\frac{c_1}{2^\beta}+f_0((a/b)^{1/\beta} c_\beta^{-1/\beta}\right)\\
&\le& 2^{\beta}  a
f\left(\left({a}/{b}\right)^{1/\beta}c_\beta^{-1/\beta}\right).
\end{eqnarray*}
\end{proof}

In our studies of approximation formulas, we will be using the
following straightforward corollary of Theorem \ref{FP} rather
than Theorem  \ref{FP} itself. The corollary clarifies the
interpolation nature of Theorem \ref{FP}.

\begin{cor}\label{coralpha}
Let $F\in \Wip(\C_+)$ be such that
$z^{-\beta}  F \in \Wip(\C_+)$ for some  $\beta>0$, and denote
\begin{equation}\label{condition1}
\|F\|_{\Wip(\C_+)}=a \qquad \text{and} \qquad \|z^{-\beta} F\|_{\Wip(\C_+)}=b.
\end{equation}
Then for every $\alpha\in [0,\beta]$ one has $z^{-\alpha} F \in
\Wip(\C_+)$ and
\begin{equation}\label{res1}
\|z^{-\alpha} F\|_{\Wip(\C_+)}\le  2^{\beta} c_\beta^{\alpha/\beta} a^{1-\alpha/\beta} b^{\alpha/\beta}.
\end{equation}
\end{cor}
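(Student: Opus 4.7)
The plan is to derive Corollary \ref{coralpha} as a direct specialization of Theorem \ref{FP} by taking the test function to be $f(z) = z^{-\alpha}$. First I would verify that, for every $\alpha \in [0,\beta]$, the function $z \mapsto z^{-\alpha}$ lies in the class $\mathcal{S}_\beta$ of generalized Stieltjes functions of order $\beta$. The endpoints are trivial: $z^0 = 1$ fits into the representation \eqref{stilrep} by taking $a=0$, $b=1$, $\nu=0$, while $z^{-\beta}$ fits by taking $a=1$, $b=0$, $\nu=0$. For the interior range $\alpha \in (0,\beta)$, the identity
\[
\frac{1}{z^\alpha} = \frac{1}{B(\beta-\alpha,\alpha)} \int_{0+}^\infty \frac{s^{\beta - \alpha - 1}}{(z+s)^\beta}\, ds, \qquad z \in \C_+,
\]
(obtained by the substitution $s = zt$ and the Beta function integral) exhibits $z^{-\alpha}$ as an element of $\mathcal{S}_\beta$ with Stieltjes measure $\nu(ds) = B(\beta-\alpha,\alpha)^{-1} s^{\beta-\alpha-1}\, ds$.

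Next I would apply Theorem \ref{FP} with this choice of $f$. By \eqref{res} and the hypotheses \eqref{condition1},
\[
\|z^{-\alpha} F\|_{\Wip(\C_+)} \le 2^\beta a \, f\!\left((a/b)^{1/\beta} c_\beta^{-1/\beta}\right)
= 2^\beta a \left[(a/b)^{1/\beta} c_\beta^{-1/\beta}\right]^{-\alpha}.
\]
Regrouping the powers of $a$, $b$ and $c_\beta$ yields exactly the bound \eqref{res1}. In particular, $z^{-\alpha} F \in \Wip(\C_+)$ because Theorem \ref{FP} produces a finite norm.

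The only step that requires a small amount of care is confirming that the constant function $1$ and, more generally, $z^{-\alpha}$ for $\alpha \in [0,\beta]$ sit inside $\mathcal{S}_\beta$ as defined via \eqref{stilrep}; this is the Beta-function identity above, which also checks the admissibility condition $\int_{0+}^\infty s^{\beta-\alpha-1}(1+s)^{-\beta}\, ds < \infty$ precisely when $\alpha \in (0,\beta)$. Once this membership is in hand, the corollary is a one-line substitution into Theorem \ref{FP}, with the interpolation character made manifest by the appearance of the exponents $1 - \alpha/\beta$ and $\alpha/\beta$ on $a$ and $b$ respectively.
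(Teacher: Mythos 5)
Your proof is correct and follows essentially the same route as the paper: the paper simply notes that $z^{-\alpha} \in \mathcal{S}_\beta$ for $\alpha \in [0,\beta]$ (citing Karp's work) and applies Theorem \ref{FP}, which is exactly your substitution. The only difference is that you supply the Beta-function integral representation explicitly and check the admissibility condition by hand rather than by reference, which is a harmless and self-contained elaboration of the same argument.
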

For the proof of Corollary \ref{coralpha} it suffices to note that
$z^{-\alpha}\in \mathcal{S}_\beta$ if $\alpha \in [0,\beta]$ (see
\cite[Theorem 3]{Karp}).
\smallskip

 For a
 function $\varphi \in \Phi$ define
\begin{equation}\label{deltaphi}
\Delta^{\varphi}_t(z):=e^{-t\varphi(z)}-e^{-tz}, \qquad z \in \mathbb C_+, \quad t >0.
\end{equation}

We proceed with deriving several consequences of Corollary
\ref{coralpha} which will be instrumental in Sections
\ref{applgeneral} and \ref{secanal}  dealing with convergence
 rates in approximation formulas for $C_0$-semigroups.
The convergence
 rates will follow from estimates of the $\Wip(\mathbb C_+)$-norm
 of $z^{-\alpha} \Delta^{\varphi}_t, \alpha \in [0,2].$ To get
 bounds for $\| z^{-\alpha} \Delta^{\varphi}_t\|_{\Wip(\mathbb
 C_+)}$ using Corollary \ref{coralpha}
 we first compute the $\Wip(\mathbb C_+)$-norm of $z^{-2} \Delta^{\varphi}_t.$
\begin{prop}\label{T1Dop}
Let $\varphi \in \Phi.$ For every  $t >0,$ the function
$z^{-2} \Delta^{\varphi}_t$ belongs to $\Wip(\C_+)$ and
\[
\|z^{-2} \Delta^{\varphi}_t\|_{\Wip(\C_+)}=\frac{t}{2}|\varphi''(0+)|, \qquad t
>0.
\]
\end{prop}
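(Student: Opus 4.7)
The plan is to factor $\Delta_t^\varphi(z)$ so that the $z^{-2}$ singularity is absorbed into a bounded completely monotone function, and then to exploit equation \eqref{normcomp}. More precisely, I would write
\[
\Delta_t^\varphi(z) = e^{-t\varphi(z)} - e^{-tz} = (z - \varphi(z)) \int_0^t e^{-(t-s)z - s \varphi(z)}\, ds
\]
by the fundamental theorem of calculus applied to $s \mapsto e^{-s\varphi(z) - (t-s)z}$, and then check that both factors, after dividing $z - \varphi(z)$ by $z^2$, are bounded and completely monotone on $(0,\infty)$.

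For the second factor, for each fixed $s \in [0,t]$ the function $z \mapsto e^{-(t-s)z}e^{-s\varphi(z)}$ is the Laplace transform of the probability measure $\delta_{t-s}\ast\mu_s$, where $(\mu_s)$ is the convolution semigroup attached to $\varphi$ by Theorem \ref{subordf}. Integration in $s$ therefore gives a bounded completely monotone function whose value at $0^+$ equals $t$. For the first factor, since $\varphi \in \Phi$ we can write (using Remark \ref{eqCond} and $\varphi\sim (0,b,\mu)$ with $b+\int s\,\mu(ds)=1$)
\[
z - \varphi(z) = \int_{0+}^{\infty} \bigl(zs - 1 + e^{-zs}\bigr)\,\mu(ds).
\]
The elementary identity $ys - 1 + e^{-ys} = y^{2} \int_0^s (s-w) e^{-yw}\, dw$, verified by differentiating both sides in $y$, then gives
\[
\frac{z - \varphi(z)}{z^{2}} = \int_{0+}^{\infty} \int_0^s (s - w) e^{-zw}\,dw\,\mu(ds),
\]
which is manifestly a Laplace transform of a positive measure, bounded in view of $\int_{0+}^\infty s^{2}\mu(ds) < \infty$, and hence a bounded completely monotone function; its value at $0^+$ is $\tfrac12\int_{0+}^{\infty} s^{2}\,\mu(ds) = \tfrac12 |\varphi''(0+)|$.

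Combining the two factors, $z^{-2}\Delta_t^\varphi$ is a product of bounded completely monotone functions, hence itself bounded and completely monotone. By \eqref{normcomp} its $\Wip(\C_+)$-norm equals its value at $z = 0^+$, which is precisely $\tfrac{t}{2}|\varphi''(0+)|$.

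The main obstacle is the completely monotone representation of $(z-\varphi(z))/z^2$ in the second step. Its positivity at $z>0$ is immediate from concavity of $\varphi$, but to obtain that the \emph{whole} function (not only its value at $0$) is completely monotone one must pass through the L\'evy--Khintchine measure of $\varphi$ and the double-integration identity above; once this is in place, the remaining assembly and the norm identification via \eqref{normcomp} are routine.
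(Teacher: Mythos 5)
Your proof is correct, and it follows a genuinely different route from the one in the paper. The paper's argument works directly with the subordination semigroup $(\nu_t)_{t\ge 0}$ of $\varphi$: it computes the explicit density $G_t(s)$ with
\[
z^{-2}\Delta_t^\varphi(z)=\int_0^\infty e^{-zs}G_t(s)\,ds,
\]
shows by a case analysis ($s\le t$ versus $s>t$) that $G_t\ge 0$, and then finds the norm by two applications of L'H\^opital's rule. You instead use the factorization
\[
\Delta_t^\varphi(z)=(z-\varphi(z))\int_0^t e^{-(t-s)z-s\varphi(z)}\,ds,
\]
and verify that each of the two factors, with $z^{-2}$ applied to the first, is individually a bounded completely monotone function: the second via the convolution semigroup $\delta_{t-s}*\mu_s$, the first via the L\'evy--Khintchine measure together with the identity $ys-1+e^{-ys}=y^2\int_0^s(s-w)e^{-yw}\,dw$. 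Your approach is more modular and avoids both the piecewise description of $G_t$ and the L'H\^opital computation, since the boundary value at $0^+$ is read off from each factor separately ($\tfrac12|\varphi''(0+)|$ and $t$, respectively, with $|\varphi''(0+)|=\int_{0+}^\infty s^2\,\mu(ds)$). What the paper's approach buys is a single closed-form expression for the density $G_t$ that is used nowhere else, so nothing is lost by your route. All the steps you outline (the fundamental theorem of calculus factorization, the Fubini interchange justified by non-negativity of the integrands, the finiteness of $\int_w^\infty(s-w)\,\mu(ds)$ and of $\int_{0+}^\infty s^2\,\mu(ds)$, and the invocation of \eqref{normcomp}) go through without trouble, and you have correctly allowed for a possibly nonzero drift coefficient $b$ in the L\'evy--Khintchine triple.
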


\begin{proof}
Fix $t >0$. By Theorem \ref{subordf}   there exists a convolution semigroup of subprobability  Radon
measures $\nu_t$ on $[0,\infty)$ such that
\begin{equation}\label{subordd}
e^{-t\varphi(z)}=\int_0^\infty e^{-zs}\,\nu_t(ds).
\end{equation}
Since $e^{-t\varphi}$ is completely monotone and $\varphi(0)=0,$ \eqref{normcomp} implies that
\begin{equation}\label{exp1Dop}
\|e^{-t\varphi}\|_{\Wip(\C_{+})}=e^{-t\varphi(0)}=
\int_0^\infty\,\nu_t(ds)=1.
\end{equation}

Let us further show that
\begin{equation}\label{GtSDop}
z^{-2} \Delta^{\varphi}_t(z)=\int_0^\infty e^{-z s}G_t(s)\,ds,
\end{equation}
where
\begin{eqnarray*}
G_t(s)=\chi(t-s)\int_0^s (s-\tau)\, \nu_t(d\tau)
+\chi(s-t)\int_s^\infty (\tau -s)\,\nu_t(d\tau),
\end{eqnarray*}
and $\chi(\cdot)$ stands for the characteristic function of
$\mathbb R_+$. Taking into account \eqref{subordd} and that
\[
\frac{1}{z^2}=\int_0^\infty e^{-z\tau}\tau\,d\tau,\quad z>0,
\]
we have
\begin{eqnarray*}
z^{-2} \Delta^{\varphi}_t(z)&=&\int_0^\infty e^{-z\tau} \tau d\tau \int_0^\infty e^{-zs}
\,\nu_t(ds) -\int_t^\infty e^{-zs}(s-t)\,ds
\\
&=&\int_0^\infty e^{-zs} \,\int_0^s (s-\tau) \,\nu_t(d\tau) \, ds-
\int_t^\infty e^{-zs}(s-t)\,ds\\
&=&\int_{0}^{\infty}e^{-zs} H_t(s)\, ds,
\end{eqnarray*}
where
\[
 H_t(s)= \int_0^s (s-\tau)\, \nu_t(d\tau)-
\chi(s-t)(s-t).
\]
It is clear that if $t \ge s$ then $H_t(s)=G_t(s).$ We prove that
$H_t(s)=G_t(s)$ for $t  < s$ as well. If $s >t,$ then using
\eqref{exp1Dop} and
\[
\int_0^\infty \tau \,\nu_t(d\tau)=-(e^{-t\varphi})'(0+)=t,
\]
we infer that
\begin{eqnarray*}
H_t(s)&=&s\left(\int_0^s\, \nu_t(d\tau)-1\right)+t-\int_0^s \tau \,\nu_t(d\tau)\\
&=&-s\int_s^\infty \,\nu_t(d\tau)+
\int_s^\infty \tau \,\nu_t(d\tau)\\
&=&\int_s^\infty (\tau -s)\,\nu_t(d\tau),
\end{eqnarray*}
hence  $H_t(s)=G_t(s).$ Thus \eqref{GtSDop} holds. Since
$G_t(s)>0,$ $s>0,$ the function $z^{-2} \Delta^{\varphi}_t$ is completely monotone.
Hence, applying Lopital's rule twice, we get
\[
\|z^{-2} \Delta^{\varphi}_t\|_{\Wip(\C_+)}=\lim_{z\to 0+}\,
\frac{e^{-t\varphi(z)}-e^{-tz}}{z^2}= \frac{t}{2}|\varphi''(0+)|.
\]
\end{proof}

 Now we are in position to use Corollary \ref{coralpha} for estimates of
 $\|z^{-\alpha}\Delta^{\varphi}_{t}\|_{\Wip(\C_+)}$ when $\alpha \in [0,2].$
 \begin{cor}\label{corrr}
Let  $\varphi \in \Phi$. Then for every $\alpha \in [0,2],$
\begin{equation}\label{ineqdelta}
 \|z^{-\alpha} \Delta^{\varphi}_{t}\|_{\Wip(\mathbb C_+)} \le 8  \left(t|\varphi''(0+)|\right)^{\alpha/2}.
\end{equation}
\end{cor}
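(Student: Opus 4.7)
The plan is to apply Corollary \ref{coralpha} directly to the function $F = \Delta^{\varphi}_{t}$ with exponent $\beta = 2$, using Proposition \ref{T1Dop} to control the norm of $z^{-2}F$ and a trivial triangle-inequality bound for $\|F\|_{\Wip(\C_+)}$ itself.

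First, I would observe that $\Delta^{\varphi}_t = e^{-t\varphi} - e^{-tz}$ sits in $\Wip(\C_+)$ and the two summands are each bounded completely monotone functions taking the value $1$ at the origin. By \eqref{normcomp} each has $\Wip(\C_+)$-norm equal to $1$, so the triangle inequality yields $\|\Delta^{\varphi}_t\|_{\Wip(\C_+)} \le 2$. Next, Proposition \ref{T1Dop} gives $\|z^{-2}\Delta^{\varphi}_t\|_{\Wip(\C_+)} = \tfrac{t}{2}|\varphi''(0+)|$, so the hypotheses of Corollary \ref{coralpha} hold with $a = 2$ and $b = \tfrac{t}{2}|\varphi''(0+)|$.

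Applying Corollary \ref{coralpha} with $\beta = 2$ and arbitrary $\alpha \in [0,2]$, I get
\begin{equation*}
\|z^{-\alpha}\Delta^{\varphi}_t\|_{\Wip(\C_+)} \le 2^{2}\, c_2^{\alpha/2}\, 2^{1-\alpha/2}\, \Bigl(\tfrac{t}{2}|\varphi''(0+)|\Bigr)^{\alpha/2} = 8\, (c_2/4)^{\alpha/2}\, (t|\varphi''(0+)|)^{\alpha/2}.
\end{equation*}
Using the explicit bound $c_2 \le 5/2$ from \eqref{c2}, one has $(c_2/4)^{\alpha/2} \le (5/8)^{\alpha/2} \le 1$ for $\alpha \in [0,2]$, which yields \eqref{ineqdelta}.

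There is no real obstacle here: the proof is essentially an interpolation between the endpoints $\alpha = 0$ (crude triangle inequality) and $\alpha = 2$ (exact computation from Proposition \ref{T1Dop}), realized through Corollary \ref{coralpha}. The only delicate point is tracking the constants carefully so that the factor $(c_2/4)^{\alpha/2}$ stays bounded by $1$, which is exactly why the earlier quantitative estimate $c_2 \le 5/2$ was recorded in \eqref{c2}.
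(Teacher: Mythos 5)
Your proof is correct and matches the paper's own argument step for step: bound $\|\Delta^{\varphi}_t\|_{\Wip(\C_+)}\le 2$ by the triangle inequality, use Proposition~\ref{T1Dop} for the $z^{-2}$ endpoint, then apply Corollary~\ref{coralpha} with $\beta=2$ and the constant estimate $c_2\le 5/2$ from~\eqref{c2}. The constant-tracking is also exactly as in the paper.
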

\begin{proof}
First note that
\begin{equation*}
\|\Delta^{\varphi}_{t}\|_{\Wip(\C_+)}\le \|e^{-t\varphi(\cdot)}\|_{\Wip(\C_+)}
+ \|e^{-t\cdot}\|_{\Wip(\C_+)}= 2, \quad \quad t>0.
\end{equation*}
Moreover,   Proposition \ref{T1Dop} yields
$$\|z^{-2}\Delta^{\varphi}_{t}\|_{\Wip(\C_+)}=
\frac{t}{2}|\varphi''(0+)|.$$ Then, using  Corollary
\ref{coralpha} (with $\beta=2$) and the bound for $c_2$ in
\eqref{c2}, we obtain
\begin{equation*}
 \|z^{-\alpha}\Delta^{\varphi}_{t}\|_{\Wip(\C_+)} \le 8  \left(t|\varphi''(0+)|\right)^{\alpha/2}.
\end{equation*}

\end{proof}

For all $n \in \mathbb N$, $t> 0,$ and $z \in \C_+$ define the scaled versions of \eqref{deltaphi}:
\begin{eqnarray}
\Delta^{\varphi}_{t,n}(z):=e^{-nt\varphi(z/n)}-e^{-tz},\label{deltadef}
\end{eqnarray}
and
\begin{eqnarray}
E^{\varphi}_{t,n}(z):=e^{-n\varphi(zt/n)}-e^{-tz} \label{edef},
\end{eqnarray}
and note that
\begin{equation}\label{relation}
\Delta^{\varphi}_{t,n}(z)=\Delta^{\varphi}_{nt}(z/n), \qquad E^{\varphi}_{t,n}(z)=\Delta^{\varphi}_n(tz/n).
\end{equation}
The next statement is a scaled version of the preceding corollary.
\begin{cor}\label{mainalpha0}
Let  $\varphi \in \Phi$. Then for all $\alpha \in [0,2], n\in \N$ and $t >0,$
\begin{align}
 \|z^{-\alpha}\Delta^{\varphi}_{t,n}\|_{\Wip(\C_+)}\le 8 \left(|\varphi''(0+)|\frac{t}{n}\right)^{\alpha/2},
  \label{mainalpha}
  \end{align}
  and
  \begin{align} \|z^{-\alpha}E^{\varphi}_{t,n}\|_{\Wip(\C_+)}\le 8
  \left(|\varphi''(0+)|\frac{t^2}{n}\right)^{\alpha/2}. \label{mainalpha1}
\end{align}
\end{cor}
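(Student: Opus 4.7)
The plan is to deduce the scaled bounds directly from Corollary \ref{corrr} by exploiting the scaling relation \eqref{relation} together with the dilation invariance of the norm on $\Wip(\C_+)$. The crucial observation is that if $f = \mathcal L \mu \in \Wip(\C_+)$ and $\sigma>0$, then $f(\sigma \cdot ) = \mathcal L \mu_\sigma$, where $\mu_\sigma$ is the image measure of $\mu$ under $s \mapsto s/\sigma$; in particular $|\mu_\sigma|(\mathbb R_+) = |\mu|(\mathbb R_+)$, so
\[
\|f(\sigma\,\cdot)\|_{\Wip(\C_+)} = \|f\|_{\Wip(\C_+)}, \qquad \sigma>0.
\]
This identity will transfer all estimates through the substitutions $z \mapsto z/n$ and $z \mapsto tz/n$ without loss.

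For \eqref{mainalpha}, I would set $g(w) := w^{-\alpha} \Delta^{\varphi}_{nt}(w)$, which lies in $\Wip(\C_+)$ by Corollary \ref{corrr}, with $\|g\|_{\Wip(\C_+)} \le 8(nt\,|\varphi''(0+)|)^{\alpha/2}$. Using $\Delta^{\varphi}_{t,n}(z) = \Delta^{\varphi}_{nt}(z/n)$ from \eqref{relation}, a direct computation gives
\[
z^{-\alpha}\Delta^{\varphi}_{t,n}(z) = n^{-\alpha}\, (z/n)^{-\alpha} \Delta^{\varphi}_{nt}(z/n) = n^{-\alpha}\, g(z/n).
\]
Dilation invariance then yields $\|z^{-\alpha}\Delta^{\varphi}_{t,n}\|_{\Wip(\C_+)} = n^{-\alpha}\|g\|_{\Wip(\C_+)} \le 8 n^{-\alpha}(nt|\varphi''(0+)|)^{\alpha/2} = 8(|\varphi''(0+)|\,t/n)^{\alpha/2}$, which is precisely \eqref{mainalpha}.

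For \eqref{mainalpha1}, the argument is completely analogous. Setting $h(w) := w^{-\alpha}\Delta^{\varphi}_n(w)$, Corollary \ref{corrr} gives $\|h\|_{\Wip(\C_+)} \le 8(n|\varphi''(0+)|)^{\alpha/2}$. Using $E^{\varphi}_{t,n}(z) = \Delta^{\varphi}_n(tz/n)$ from \eqref{relation}, one checks
\[
z^{-\alpha} E^{\varphi}_{t,n}(z) = (t/n)^\alpha\, (tz/n)^{-\alpha}\Delta^{\varphi}_n(tz/n) = (t/n)^\alpha\, h(tz/n),
\]
and dilation invariance gives $\|z^{-\alpha}E^{\varphi}_{t,n}\|_{\Wip(\C_+)} = (t/n)^\alpha \|h\|_{\Wip(\C_+)} \le 8(t/n)^\alpha (n|\varphi''(0+)|)^{\alpha/2} = 8(|\varphi''(0+)|\,t^2/n)^{\alpha/2}$, which is \eqref{mainalpha1}.

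There is essentially no obstacle here: the substantive content is already contained in Corollary \ref{corrr}, and the present corollary is a bookkeeping exercise. The only point worth stating explicitly is the isometric dilation property of $\Wip(\C_+)$, which justifies pulling the scaling factor in and out of the norm. Once this is recorded, the two inequalities fall out by matching powers of $n$ and $t$.
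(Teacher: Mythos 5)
Your proof is correct and follows the same route as the paper: both arguments rewrite $z^{-\alpha}\Delta^{\varphi}_{t,n}$ and $z^{-\alpha}E^{\varphi}_{t,n}$ via \eqref{relation}, pull out the scalar factor, and invoke the dilation invariance of the $\Wip(\C_+)$-norm together with Corollary~\ref{corrr}. You make the dilation invariance explicit (which the paper uses silently), and there is only a trivial directional slip in identifying the image measure (it should be the pushforward under $s\mapsto\sigma s$, not $s\mapsto s/\sigma$), which does not affect the conclusion since the total variation is preserved either way.
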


\begin{proof}
Fix $n \in \mathbb N$ and $t >0.$
By \eqref{relation},
\begin{equation*}\label{reld}
z^{-\alpha}\Delta^{\varphi}_{t,n}(z)=n^{-\alpha} (z/n)^{-\alpha} \Delta_{nt}(z/n),
\end{equation*}
hence
\begin{eqnarray*}
\|z^{-\alpha}\Delta^{\varphi}_{t,n}\|_{\Wip(\mathbb C_+)}&=&n^{-\alpha}
\|z^{-\alpha} \cdot \Delta_{nt}\|_{\Wip(\mathbb C_+)}\\
&\le& 8 n^{-\alpha} (nt |\varphi''(0+)|)^{\alpha/2}\\
&=& 8 \left(\frac{t}{n}|\varphi''(0+)|\right)^{\alpha/2},
\end{eqnarray*}
and \eqref{mainalpha} is proved.

To prove \eqref{mainalpha1} observe that
\begin{equation*}\label{rele}
z^{-\alpha}E^{\varphi}_{t,n}(z)=\left({t}/{n}\right)^{\alpha} (t z/n)^{-\alpha} \Delta_{n}(t z/n).
\end{equation*}
Thus
\begin{eqnarray*}
\|z^{-\alpha}E^{\varphi}_{t,n}\|_{\Wip(\mathbb C_+)}&=&\left({t}/{n}\right)^{\alpha}
 \|z^{-\alpha} \Delta_{n}(\cdot)\|_{\Wip(\mathbb C_+)}\\
&\le& 8 \left({t}/{n}\right)^{\alpha} (n|\varphi''(0+)|)^{\alpha/2}\\
&=& 8  \left(|\varphi''(0+)|\frac{t^2}{n}\right)^{\alpha/2},
\end{eqnarray*}
and  \eqref{mainalpha1} follows.
\end{proof}

Let us now apply Corollary \ref{mainalpha0} to the approximations
of the exponential function considered in Theorem
\ref{introapprox}.
\begin{exa}\label{Exampp}

$a)$ {\it Yosida's approximation.} Let  $\varphi_1$ be a Bernstein function defined by
\[
\varphi_1(z):=\frac{z}{z+1},\qquad z>0.
\]
Note that $\varphi_1 \in \Phi$ since
\[
\varphi_1(0)=0,\quad \varphi'_1(0+)=1,\quad \varphi''_1(0+)=-2.
\]

Then \eqref{mainalpha} and \eqref{mainalpha1} imply that
\begin{eqnarray}
 \|z^{-\alpha} \Delta^{\varphi_1}_{t,n}\|_{\Wip(\C_+)} \le 16
 \left(\frac{t}{n}\right)^{\alpha/2},\label{y1}
 \end{eqnarray}
 and
 \begin{eqnarray}
 \|z^{-\alpha}  E^{\varphi_1}_{t,n}\|_{\Wip(\C_+)} \le 16
 \left(\frac{t^2}{n}\right)^{\alpha/2},\label{y2}
\end{eqnarray}
for all $t >0$ and $n \in \mathbb N.$

$b)$ \emph{Dunfod-Segal approximation.} Define a  Bernstein function
$\varphi_2$ as
\[
\varphi_2(z):=1-e^{-z},\qquad z>0.
\]
Since
\[
\varphi_2(0)=0,\quad \varphi'_2(0+)=1,\quad \varphi''_2(0+)=-1,
\]
we have $\varphi_2 \in \Phi.$ Now  \eqref{mainalpha} and
\eqref{mainalpha1} yield
\begin{eqnarray*}
 \|z^{-\alpha}  \Delta^{\varphi_2}_{t,n}\|_{\Wip(\C_+)} \le 8
 \left(\frac{t}{n}\right)^{\alpha/2},
 \end{eqnarray*}
 and
 \begin{eqnarray*}
 \|z^{-\alpha}  E^{\varphi_2}_{t,n}\|_{\Wip(\C_+)} \le 8 \left(\frac{t^2}{n}\right)^{\alpha/2},
\end{eqnarray*}
for all $t >0$ and $n \in \mathbb N.$

 $c)$ \emph{Euler's
approximation.} Consider the  Bernstein function
\[
\varphi_{3}(z):=\log (1+z),\quad z>0.
\]
Since
\[
\varphi_{3}(0)=0,\quad \varphi'_{3}(0+)=1,\quad
\varphi''_{3}(0+)=-1,
\]
we have $\varphi_3 \in \Phi.$ By \eqref{mainalpha} and
\eqref{mainalpha1},
\begin{eqnarray*}
\|z^{-\alpha} \Delta^{\varphi_3}_{t,n}\|_{\Wip(\C_+)} \le
8 \left(\frac{t}{n}\right)^{\alpha/2},
\end{eqnarray*}
and
\begin{eqnarray*}
 \|z^{-\alpha} E^{\varphi_3}_{t, n}\|_{\Wip(\C_+)} \le 8
\left(\frac{t^2}{n}\right)^{\alpha/2},
\end{eqnarray*}
for all $t >0$ and $n \in \mathbb N.$

\end{exa}

\section{Applications: rates of approximation of bounded $C_0$-semigroups}\label{applgeneral}
Using  Corollary \ref{mainalpha0} and the HP functional calculus
set up in Section \ref{fcc}, we are now able to get convergence
rates in abstract and concrete approximation schemes for
$C_0$-semigroups on Banach spaces. Without loss of generality, we
can consider only bounded $C_0$-semigroups. (The general case can
be reduced to this case by rescaling.)

 If $-A$ is the generator of a bounded $C_0$-semigroup $(e^{-tA})_{t \ge 0}$
on a Banach space $X,$ then for $t >0$ and $n \in \N$ let
\begin{equation*}
 \Delta^{\varphi}_{t,n}(A):=e^{-nt\varphi(A/n)}-e^{-tA},
 \end{equation*}
 and
 \begin{equation*}
  E^{\varphi}_{t, n}(A):= e^{-n\varphi(t A/n)}-e^{-tA},
\end{equation*}
where the right-hand sides are well-defined by Theorem \ref{subord}.

We need to define the fractional powers of $-A$ in the framework of the extended HP calculus.
Note that
\begin{equation}\label{regular}
z^{\alpha}v_{\alpha,1}(z)=u_{\alpha,1}(z) \quad \text {and} \quad
z^{-\alpha}u_{\alpha,1}(z)=v_{\alpha,1}(z), \quad z \in \mathbb
C_+,\quad \alpha >0,
\end{equation}
and $v_{\alpha, 1}, u_{\alpha,1} \in \Wip (\mathbb C_+)$ by Lemma
\ref{Propbeta}. Thus, by the first equality in \eqref{regular},
the function $z \to z^{\alpha},\alpha >0,$ belongs to the extended
HP calculus for $A$. If $A$ is injective then $A^{\alpha}$ is
injective as well (see \cite[Proposition 3.1.1, d)]{Ha06}), and by
the second equality in \eqref{regular}, the function $z\to
z^{-\alpha}, \alpha >0,$ belongs to the extended HP calculus for
$A$ too. It is  instructive to observe that the
fractional powers defined in this way coincide with the fractional
powers defined by means of the extended holomorphic functional
calculus developed e.g. in \cite[Section 3]{Ha06}. We omit the
details and refer to \cite{BaGoTa}, where, in particular,
compatibility of various calculi is discussed. (See also
\cite[Proposition 1.2.7 and Section 3.3]{Ha06}.)

Now we are in position to recast  estimates of scalar functions
from the previous section into norm estimates of functions of
generators.
\begin{thm}\label{ThC}
Let  $-A$ be the generator of a bounded $C_0$-semigroup $(e^{-tA})_{t \ge 0}$
on a Banach space $X$, and
let $\varphi \in \Phi.$ If $\alpha \in (0,2],$
then for all $x \in \dom (A^\alpha), t >0,$ and $n \in \mathbb N,$
\begin{equation}
\|\Delta^{\varphi}_{t,n}(A)x\|\le 8 M
\left(\frac{t|\varphi''(0+)|}{n}\right)^{\alpha/2}\|A^\alpha
x\|,\label{Estt1} \end{equation} and
\begin{equation}
\|E^{\varphi}_{t,n}(A)x\|\le 8 M
\left(\frac{t^2|\varphi''(0+)|}{n}\right)^{\alpha/2}\|A^\alpha
x\|, \label{Estt1a}
\end{equation}
where $M:=\sup_{t\ge 0}\|e^{-tA}\|$.
\end{thm}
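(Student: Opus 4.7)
The plan is to read both inequalities directly off Corollary \ref{mainalpha0} by transporting the scalar bounds through the extended Hille--Phillips functional calculus. Fix $t>0$, $n\in\N$, and $\alpha\in(0,2]$, and set
\[
 h(z):=z^{-\alpha}\Delta^{\varphi}_{t,n}(z),\qquad k(z):=z^{-\alpha}E^{\varphi}_{t,n}(z),\qquad z\in\C_+.
\]
By Corollary \ref{mainalpha0} both $h$ and $k$ belong to $\Wip(\C_+)$, with
\[
 \|h\|_{\Wip(\C_+)}\le 8\bigl(|\varphi''(0+)|\,t/n\bigr)^{\alpha/2},\qquad
 \|k\|_{\Wip(\C_+)}\le 8\bigl(|\varphi''(0+)|\,t^2/n\bigr)^{\alpha/2}.
\]
Since $\Delta^{\varphi}_{t,n}$ and $E^{\varphi}_{t,n}$ are themselves linear combinations of $\Wip(\C_+)$-functions (Theorem \ref{subordf} shows that $e^{-nt\varphi(\cdot/n)}$ and $e^{-n\varphi(t\cdot/n)}$ are Laplace transforms of subprobability measures), they sit in $\Wip(\C_+)$ as well.

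The main step is the factorisation $\Delta^{\varphi}_{t,n}(z)=z^{\alpha}\cdot h(z)$, and the analogous identity for $E^{\varphi}_{t,n}$ and $k$. Applying the product rule \eqref{hpfc.e.prod} of the extended HP calculus with the regularisable function $f(z)=z^{\alpha}$ (regularised, e.g., by $u_{\alpha,1}$ from Lemma \ref{Propbeta}, so that $f(A)=A^{\alpha}$ in the extended HP calculus) and $g=h\in\Wip(\C_+)$, we get
\[
 h(A)\,A^{\alpha}\subseteq A^{\alpha}\,h(A)=(z^{\alpha}h)(A)=\Delta^{\varphi}_{t,n}(A).
\]
Hence, for every $x\in\dom(A^{\alpha})$,
\[
 \Delta^{\varphi}_{t,n}(A)x=h(A)\,A^{\alpha}x,
\]
and the same identity, with $k$ in place of $h$, holds for $E^{\varphi}_{t,n}(A)x$.

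It remains to estimate $\|h(A)\|$ and $\|k(A)\|$ by the fundamental HP bound \eqref{hille-pillips}:
\[
 \|h(A)\|\le M\,\|h\|_{\Wip(\C_+)}\le 8M\bigl(|\varphi''(0+)|\,t/n\bigr)^{\alpha/2},
\]
and likewise $\|k(A)\|\le 8M\bigl(|\varphi''(0+)|\,t^2/n\bigr)^{\alpha/2}$. Combining this with the factorisation above yields \eqref{Estt1} and \eqref{Estt1a}. The only subtle point is the use of the product rule when $A$ is not assumed injective: this is handled by taking the regularisable factor to be $z^{\alpha}$ (always in the extended calculus for a bounded semigroup generator by Lemma \ref{exthil} applied to $u_{\alpha,1}$), rather than $z^{-\alpha}$, so that no inverse of $A^{\alpha}$ ever appears explicitly. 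I expect this careful choice of regulariser, together with the bookkeeping that $\dom(f(A))=\dom(A^{\alpha})$ on the right-hand side of the product rule, to be the only delicate step; the rest is a direct substitution of the scalar estimates from Corollary \ref{mainalpha0}.
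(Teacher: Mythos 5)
Your proof is correct, and it takes a genuinely cleaner route than the paper's. The paper first passes from $A$ to the shifted operator $A+\delta$, where $(A+\delta)^{-\alpha}$ is a bounded operator, derives the estimate for $A+\delta$ from \eqref{hille-pillips} and \eqref{mainalpha}, and then lets $\delta\to 0^{+}$; the limit step needs two auxiliary facts ($(A+\delta)^{\alpha}x\to A^{\alpha}x$, cited from \cite[Prop.\ 3.1.9]{Ha06}, and $\Wip(\C_+)$-convergence of $\Delta^{\varphi}_{t,n}(\cdot+\delta)\to\Delta^{\varphi}_{t,n}$ via dominated convergence). You instead apply the product rule \eqref{hpfc.e.prod} directly at $A$, choosing $z^{\alpha}$ — not $z^{-\alpha}$ — as the regularizable factor and $h=z^{-\alpha}\Delta^{\varphi}_{t,n}\in\Wip(\C_+)$ as the bounded factor. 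This sidesteps both the injectivity issue and the $\delta$-limit entirely, and reads off the bound in one line from Corollary \ref{mainalpha0}. Both proofs hinge on the same scalar estimate; yours buys simplicity by picking the right regularizable factor, while the paper's perturbation-and-limit scheme is the more pedestrian but also more routine device.

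One small slip in your writeup: $z^{\alpha}$ is \emph{not} regularized by $u_{\alpha,1}$. Indeed $u_{\alpha,1}(z)\,z^{\alpha}=z^{2\alpha}/(z+1)^{\alpha}$ is unbounded and does not lie in $\Wip(\C_+)$. The correct regularizer, as recorded in \eqref{regular}, is $e=v_{\alpha,1}=(z+1)^{-\alpha}$: then $e\in\Wip(\C_+)$, $e(A)=(A+1)^{-\alpha}$ is injective, and $e(z)z^{\alpha}=u_{\alpha,1}(z)\in\Wip(\C_+)$. Similarly, the parenthetical appeal to Lemma \ref{exthil} (which decomposes Bernstein functions) is not the right justification; the relevant fact is the discussion around \eqref{regular} in Section~5. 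Neither of these affects the validity of the argument, only the citation hygiene.
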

\begin{proof}
Let $n \in \mathbb N, t >0,$ $\alpha \in (0,2]$ and $x \in \dom
(A^\alpha)$ be fixed. We use the HP calculus to estimate the norm
of  $(A+\delta)^{-\alpha} \Delta_{t,n}(A+\delta), \delta >0.$ Since $\dom
(A^\alpha)=\dom(A+\delta)^\alpha$
 for every $\delta >0,$
  \eqref{hille-pillips} and \eqref{mainalpha} imply that
\begin{equation}\label{delta}
\| \Delta^\varphi_{t,n}(A+\delta) x \| \le 8 M\left(\frac{t}{n}|\varphi(0+)|\right)^{\alpha/2}
 \| (A+\delta)^{\alpha}x \|.
\end{equation}
By \cite[Proposition 3.1.9]{Ha06},
\begin{equation}
\lim_{\delta \to 0+}(A+\delta)^\alpha x = A^\alpha x.
\end{equation}
Furthermore, from Theorem \ref{subordf} it follows that
\begin{equation}
 \Delta_{t,n}^{\varphi}(\cdot +\delta)-
 \Delta_{t,n}^{\varphi}(\cdot)=\int_{0}^{\infty}e^{-zs}(e^{-\delta s}-1)\,
 d \nu_{t,n}(s),
\end{equation}
where $\nu_{n,t}$ are  bounded Radon  measures on $\mathbb R_+.$
Hence, by the Lebesgue bounded convergence theorem,
\begin{equation*}
\Wip(\mathbb C_+)-\lim_{\delta \to 0+} \Delta^{\varphi}_{t,n}(\cdot +\delta)
= \Delta^{\varphi}_{t,n},
\end{equation*}
and by \eqref{hille-pillips},
\begin{equation*}
\lim_{\delta \to 0+} \Delta^{\varphi}_{t,n}(A +\delta)x
= \Delta^{\varphi}_{t,n}(A)x.
\end{equation*}
Letting now   $\delta \to 0+$  in \eqref{delta} we get
\begin{equation*}
\| \Delta^{\varphi}_{t,n}(A) x \| \le 8 M\left(\frac{t}{n}|\varphi(0+)|\right)^{\alpha/2} \| A^{\alpha}x \|,
\end{equation*}
and  \eqref{Estt1} is proved.

The proof of \eqref{Estt1a} is completely analogous to the proof of \eqref{Estt1} being based on
 \eqref{hille-pillips} and \eqref{mainalpha1}, and is therefore omitted.
\end{proof}
\begin{rem}
If $\alpha=2$ then \eqref{Estt1} and \eqref{Estt1a} hold with
better constants. Indeed, arguing as in the proof of Theorem
\ref{ThC} and using Theorem \ref{FP}, we obtain
\begin{equation}
\|\Delta^{\varphi}_{t,n}(A)x\|\le
 M \frac{t|\varphi''(0+)|}{2n}\|A^2 x\|,\label{Estt11}
 \end{equation}
 and
 \begin{equation}
\|E^{\varphi}_{t,n}(A)x\|\le  M \frac{t^2|\varphi''(0+)|}{2n}\|A^2
x\|, \label{Estt11a}
\end{equation}
for all $n\in \N, t>0$ and $x \in \dom(A^2).$
\end{rem}

\begin{rem}\label{rangealpha}
Observe that Theorem \ref{ThC} does not in general hold  if  $\alpha>2.$ To see this, we first note that
if $\varphi \in \Phi$ and $t>0$ then
\[
\lim_{z\to 0,\;{\rm Re}\,z\ge 0}\frac{\Delta^{\varphi}_{t,n}(z)}{z^2}=
\lim_{z\to 0,\,{\rm Re}\,z\ge 0}\,\frac{e^{-nt\varphi(z/n)}-e^{-tz}}{z^2}=\frac{t}{2n}|\varphi''(0+)|.
\]
Hence if   $\varphi''(0+)\not = 0$
(that is if $\varphi(z) \not\equiv z$), and if $-A$ is a
generator of a bounded $C_0$-semigroup on a Banach space $X$  such that
$\cls{\ran}(A)=X$ and
$\sigma(A)$ has accumulation point at zero, then
$A^{-\alpha}\Delta_{t,n}^\varphi(A)$ is not bounded for all $t >0$ and $n \in \mathbb N.$
Indeed, if $A^{-\alpha}\Delta_{t,n}^\varphi(A)$ is bounded for some $t>0$ and $n \in \mathbb N,$
then, using the product rule \eqref{hpfc.e.prod}, for any $\tau>0$ we have
\[
\|v_{\alpha,\tau}(A)\Delta_{t,n}^\varphi(A)\|=\|A^{-\alpha}\Delta_{t,n}^\varphi(A)u_{\alpha,\tau}(A)\|\le
Mc_\alpha \|A^{-\alpha}\Delta_{t,n}^\varphi(A)\|.
\]
On the other hand, choosing
 $\{\lambda_k\}_{k=1}^\infty\subset \sigma(A)$ such that $\lambda_k\to 0$ as $k\to\infty$, and $\tau_k:=2^{-1} |\lambda_k|, k \in \mathbb N,$
and employing  Theorem \ref{mapping},
we get
\begin{eqnarray*}
\| v_{\alpha,\tau_k}(A)\Delta^{\varphi}_{t,n}(A)\| &\ge&
\sup_{\lambda \in \sigma(A)}\,|(\lambda+\tau_k)^{-\alpha}
\Delta^{\varphi}_{t,n}(\lambda)|\\
&\ge&
 |(1+ 2^{-1} e^{-i\arg \, \lambda_k})|^{-\alpha}
|\lambda_k^{-\alpha}\Delta^{\varphi}_{t,n}(\lambda_k)|\to \infty,
k \to \infty,
\end{eqnarray*}
a contradiction.

Similarly, $A^{-\alpha} E^{\varphi}_{t,n}(A)$ is an unbounded
operator for $\alpha>2$ if $\varphi''(0+)\not = 0$ ($\varphi(z) \not\equiv z$) and $\sigma(A)$
has accumulation point at zero.
\end{rem}

The following quantification of Theorem \ref{introapprox} is a
direct consequence of Theorem \ref{ThC} and Example \ref{Exampp}.
It is one of the main results of the paper.

\begin{cor}\label{Ex22}
Let  $-A$ be the generator of a bounded $C_0$-semigroup
$(e^{-tA})_{t \ge 0}$ on a Banach space $X,$ and let $\alpha \in (0,2].$ Then for
all $x\in \dom(A^\alpha),\quad t>0,$ and $n \in \mathbb N,$
\begin{itemize}
\item [a)] \, {\rm [Yosida's approximation]}
\begin{equation*}\label{yosid1}
\|e^{-tA}x-e^{-n t A(n+A)^{-1}}x\|
\le 16 M
\left(\frac{t}{n}\right)^{\alpha/2} \|A^\alpha x\|;
\end{equation*}
\item [b)]\, {\rm [Dunford-Segal's approximation]}
\[
\|e^{-tA}x-e^{-nt(1-e^{-A/n})}x\|
\le 8 M \left(\frac{t}{n}\right)^{\alpha/2} \|A^\alpha x\|;
\]

\item [c)]\, {\rm [Euler's approximation]}
\[
\|e^{-tA}x - (1+tA/n)^{-n}x\|\le
8
M\left(\frac{t^2}{n}\right)^{\alpha/2} \|A^\alpha x\|. 
\]
\end{itemize}
where $M:=\sup_{t \ge0} \|e^{-tA}\|.$
\end{cor}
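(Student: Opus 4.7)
The plan is to derive each of parts a), b), c) as a direct specialization of Theorem \ref{ThC} to the three Bernstein functions $\varphi_1, \varphi_2, \varphi_3 \in \Phi$ exhibited in Example \ref{Exampp}. The values $|\varphi_1''(0+)|=2$ and $|\varphi_2''(0+)|=|\varphi_3''(0+)|=1$ computed there are precisely what is needed to convert the abstract bounds \eqref{Estt1} and \eqref{Estt1a} into the concrete constants in the statement. Once the Bernstein function is fixed, the only remaining task is to identify the subordinated semigroups $e^{-nt\varphi_j(A/n)}$ or $e^{-n\varphi_3(tA/n)}$ with the classical approximants on the left-hand sides.

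First I would handle parts a) and b). Here $\varphi_1(z)=z/(z+1)$ and $\varphi_2(z)=1-e^{-z}$ are bounded, so $n\varphi_j(A/n)$ is a bounded operator for each $n$, and the HP functional calculus gives directly
\[
n\varphi_1(A/n)=nA(n+A)^{-1},\qquad n\varphi_2(A/n)=n(I-e^{-A/n}).
\]
Theorem \ref{subord} then identifies $e^{-nt\varphi_j(A/n)}$ with the norm-continuous semigroup generated by $-n\varphi_j(A/n)$, i.e.\ with the operator appearing on the left-hand side of a), b). Substituting $|\varphi_1''(0+)|=2$ into \eqref{Estt1} produces the factor $8M\cdot 2^{\alpha/2}\le 16M$ (using $\alpha\le 2$), and $|\varphi_2''(0+)|=1$ gives the factor $8M$.

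For part c) I would apply \eqref{Estt1a} with $\varphi=\varphi_3(z)=\log(1+z)$ and $|\varphi_3''(0+)|=1$. The step requiring explicit verification is the operator identity $e^{-n\varphi_3(tA/n)}=(1+tA/n)^{-n}$. On scalars $e^{-n\log(1+tz/n)}=(1+tz/n)^{-n}$, and because $(1+tz/n)^{-n}$ is bounded and completely monotone on $\mathbb C_+$ it lies in $\Wip(\mathbb C_+)$; Theorem \ref{subordf} then shows that both sides are Laplace transforms of the same convolution semigroup of subprobability measures associated with $\varphi_3$, so they coincide as bounded operators via the HP calculus. Substituting $|\varphi_3''(0+)|=1$ into \eqref{Estt1a} yields the bound $8M(t^2/n)^{\alpha/2}$.

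Overall, the argument is essentially bookkeeping: once Theorem \ref{ThC} and the scalar computations of Example \ref{Exampp} are in place, nothing substantive remains. The only step that deserves care is the operator identity $e^{-n\varphi_3(tA/n)}=(1+tA/n)^{-n}$ in part c), but this reduces to a standard subordination calculation and I do not expect any genuine obstacle.
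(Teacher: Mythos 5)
Your proposal is correct and is essentially the same as the paper's own proof: the paper states that Corollary \ref{Ex22} is a direct consequence of Theorem \ref{ThC} and Example \ref{Exampp}, and your argument fills in exactly those specializations (plugging in $|\varphi_1''(0+)|=2$, $|\varphi_2''(0+)|=|\varphi_3''(0+)|=1$, using $2^{\alpha/2}\le 2$ for $\alpha\le 2$) together with the routine identifications $n\varphi_1(A/n)=nA(n+A)^{-1}$, $n\varphi_2(A/n)=n(I-e^{-A/n})$, and $e^{-n\varphi_3(tA/n)}=(1+tA/n)^{-n}$.
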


\begin{rem}\label{newapprox}
We may use Theorem \ref{ThC} and Example \ref{Exampp} to get  new
approximation formulas. In particular, applying  \eqref{y2}
instead of \eqref{y1} we obtain the following variation upon
Yosida's approximation. Let $-A$ be the generator of a bounded
$C_0$-semigroup $(e^{-tA})_{t \ge 0}$ on $X,$ and let $\alpha \in
(0,2].$ Then for all $x\in \dom(A^\alpha), t>0,$ and $n \in
\mathbb N,$
\begin{equation}\label{yosid}
\|e^{-tA}x-e^{-n t A(n+tA)^{-1}}x\| \le 8 M
\left(\frac{t^2}{n}\right)^{\alpha/2} \|A^\alpha x\|.
\end{equation}
The estimate \eqref{yosid} is clearly better than Corollary \ref{Ex22},a)
for small $t$, while  Corollary \ref{Ex22},a) gives  approximation better
than \eqref{yosid} for big $t.$ We leave the formulation of other
possible approximation relations to the reader.
\end{rem}
We will prove in Section \ref{optimalsection} below that the
convergence rate estimates provided by Theorem \ref{ThC} (and Corollary
\ref{Ex22}) are sharp under certain spectral conditions on $-A.$

\section{Rates of approximation of analytic semigroups}\label{secanal}

In this section, we improve Theorem \ref{ThC} (and Corollary
\ref{Ex22}) if $(e^{-tA})_{t \ge 0}$ is a bounded analytic
$C_0$-semigroup on a Banach space $X$. Moreover, we get convergence
rates even if the semigroup approximation takes place on the
whole of $X.$

Recall that a $C_0$-semigroup $(e^{-tA})_{t \ge 0}$ on $X$ is said
to be analytic  of angle $\theta$ if it has an analytic
 extension to $S_\theta=\{\lambda \in \mathbb C: |\arg \lambda| <\theta\}$ for
some $\theta \in (0,\frac{\pi}{2}]$ which is bounded on
$S_{\theta'} \cap  \{\lambda \in \mathbb C:   |\lambda| \le 1 \}$
for all $\theta' \in (0,\theta).$ Moreover, a $C_0$-semigroup
$(e^{-tA})_{t \ge 0}$ is called bounded analytic of angle $\theta$
if it has a bounded analytic extension to $S_{\theta'}$ for each
$\theta' \in (0,\theta).$  We call a semigroup $(e^{-tA})_{t \ge
0}$ bounded analytic if it is bounded analytic of angle $\theta$
for some $\theta \in (0,\frac{\pi}{2}].$ For basic properties of
analytic $C_0$-semigroups see e.g. \cite[Chapter 3.7]{ABHN01} and
\cite[Chapter II.4]{EngNag}.

It is well-known that bounded analytic semigroups
can be characterized in terms of their asymptotics on the real
axis. Namely,  $-A$ is the generator of a bounded
analytic semigroup $(e^{-tA})_{t \ge 0}$ on a Banach space $X$ if
and only if $\sup_{t\ge 0}\,\|e^{-tA}\|$ and
$\sup_{t>0}\,\|tAe^{-tA}\|$ are finite, see e.g. \cite[Theorem 4.6]{EngNag}.
 In this case, let for the rest of this paper
\begin{equation}\label{M1}
M_0:=\sup_{t\ge 0}\,\|e^{-tA}\|,\qquad
M_1:=\sup_{t>0}\,\|tAe^{-tA}\|,
\end{equation}
and
\begin{equation}\label{MMG}
 M:=\max \left(2M_0,M_1\right).
\end{equation}
Consider now the Banach space $\mathcal{X}=X\oplus X$ with the
norm $\|\cdot\|_{\mathcal{X}}$ given by
\[
\|(x_1,x_2)\|_{\mathcal{X}}:=\max\left(\|x_1\|,\|x_2\|\right),
\qquad (x_1,x_2) \in \mathcal X.
\]
If $A$ is a closed densely defined operator on $X,$ then define
the operator $\mathcal{A}$ on $\mathcal X$ by
\begin{equation}\label{GGG}
\dom (\mathcal A):= \dom (A)\oplus\dom (A), \quad \mathcal{A}:=
\left(\begin{array}{cc}
A & A\\
0 & A
\end{array}
\right).
\end{equation}
By \cite[p. 367--368]{CrPaTa79} (see also \cite[Chapter 5]{Cast}),
$-\mathcal{A}$ is
 the generator of an  analytic $C_0$-semigroup
 $(e^{-t\mathcal{A}})_{t \ge 0}$ on $\mathcal{X}$
if and only if $-A$ generates an analytic semigroup on $X.$ The
semigroup $(e^{-t\mathcal{A}})_{t \ge 0}$ is given by
\begin{equation}\label{AAA}
e^{-t\mathcal{A}}=
\left(\begin{array}{cc}
e^{-tA} & -tAe^{-tA}\\
0 & e^{-tA}
\end{array}
\right),\quad t> 0.
\end{equation}
Combining the two results cited above, we formulate a
statement which  will be one of the basic tools in this section.
\begin{thm}\label{crandall}
The operator $-A$ is the generator of a  bounded analytic
$C_0$-semigroup  $(e^{-tA})_{t \ge 0}$ on $X$ if and only if
$-{\mathcal A}$ defined by means of \eqref{GGG} is the generator
of a
 bounded analytic $C_0$-semigroup
 $(e^{-t\mathcal{A}})_{t \ge 0}$ on $\mathcal{X}$ given by
 \eqref{AAA}.
Moreover, if $(e^{-tA})_{t \ge 0}$ satisfies \eqref{M1}, then
\begin{eqnarray}
\sup_{t\ge 0}\,\|e^{-t\mathcal{A}}\|&\le& M_0+M_1,\label{tildeM}\\
\sup_{t >0}\, \|t {\mathcal A} e^{-t\mathcal{A}}\|&\le& 2M_1 + 4
M_1^2.\label{tildeM1}
\end{eqnarray}
\end{thm}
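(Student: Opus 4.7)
Since the bi-implication between $-A$ generating an analytic $C_0$-semigroup on $X$ and $-\mathcal{A}$ generating an analytic $C_0$-semigroup on $\mathcal{X}$, together with the matrix formula \eqref{AAA}, is already cited from \cite{CrPaTa79} and \cite{Cast}, the real content of the statement lies in the quantitative bounds \eqref{tildeM} and \eqref{tildeM1}. Combined with the characterization of bounded analyticity via simultaneous finiteness of $\sup_{t\ge 0}\|e^{-tA}\|$ and $\sup_{t>0}\|tAe^{-tA}\|$ (quoted just before the statement), these two bounds will yield the full equivalence: the forward direction is immediate from \eqref{tildeM}--\eqref{tildeM1}, while the converse is trivial since $e^{-tA}$ and $-tAe^{-tA}$ appear as the upper-left and upper-right blocks of $e^{-t\mathcal{A}}$, so $M_0$ and $M_1$ are controlled by $\sup_{t \ge 0}\|e^{-t\mathcal{A}}\|$ and $\sup_{t>0}\|t\mathcal{A}e^{-t\mathcal{A}}\|$ respectively.

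To prove \eqref{tildeM}, I would simply apply \eqref{AAA} to an arbitrary $(x_1,x_2)\in\mathcal{X}$:
\[
e^{-t\mathcal{A}}(x_1,x_2) = (e^{-tA}x_1 - tAe^{-tA}x_2,\; e^{-tA}x_2),
\]
and estimate each component using \eqref{M1}, obtaining
\[
\|e^{-t\mathcal{A}}(x_1,x_2)\|_{\mathcal{X}} \le \max\bigl(M_0\|x_1\|+M_1\|x_2\|,\; M_0\|x_2\|\bigr) \le (M_0+M_1)\|(x_1,x_2)\|_{\mathcal{X}}.
\]

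For \eqref{tildeM1}, I would compute the product of operator matrices on $\dom(\mathcal A)$:
\[
t\mathcal{A}e^{-t\mathcal{A}} = \begin{pmatrix} tAe^{-tA} & tAe^{-tA} - t^2A^2e^{-tA} \\ 0 & tAe^{-tA}\end{pmatrix}.
\]
The diagonal entries each have norm at most $M_1$ by \eqref{M1}. The crux is the off-diagonal entry, and the main trick is to use the semigroup law $e^{-tA}=(e^{-(t/2)A})^2$ to write
\[
t^2 A^2 e^{-tA} = 4\bigl((t/2)Ae^{-(t/2)A}\bigr)^2,
\]
which gives $\|t^2 A^2 e^{-tA}\|\le 4M_1^2$. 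Hence the off-diagonal entry has norm at most $M_1+4M_1^2$. Applying the matrix to $(x_1,x_2)$ and taking the max-norm yields
\[
\|t\mathcal{A}e^{-t\mathcal{A}}(x_1,x_2)\|_{\mathcal{X}} \le \max\bigl(M_1\|x_1\|+(M_1+4M_1^2)\|x_2\|,\; M_1\|x_2\|\bigr) \le (2M_1+4M_1^2)\|(x_1,x_2)\|_{\mathcal{X}},
\]
which is \eqref{tildeM1}.

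The only nontrivial step is the bound on $t^2 A^2 e^{-tA}$ via the half-time factorization; the rest is straightforward bookkeeping with the $2\times 2$ matrix form of $e^{-t\mathcal{A}}$ and the $\max$-norm on $\mathcal{X}$. No delicate functional-calculus arguments are needed here, since the identities hold on $\dom(\mathcal{A})=\dom(A)\oplus\dom(A)$, which is a core, and the resulting operators extend by continuity once the norm bounds are established.
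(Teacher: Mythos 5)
Your proof is correct. The paper does not give an explicit proof of the bounds \eqref{tildeM}--\eqref{tildeM1} --- it formulates the theorem as a combination of the cited results from \cite{CrPaTa79} and the characterization of bounded analyticity via finiteness of $M_0,M_1$ --- so the main content to supply is exactly the two block-matrix norm estimates, which you carry out correctly: the computation for \eqref{tildeM} is straightforward, and the key step for \eqref{tildeM1}, namely $t^2A^2e^{-tA}=4\bigl((t/2)Ae^{-(t/2)A}\bigr)^2$ giving $\|t^2A^2e^{-tA}\|\le 4M_1^2$, is precisely the standard trick one needs. One small inaccuracy in the plan: to recover $M_1$ from $\mathcal{A}$ in the converse direction you do not need $\sup_t\|t\mathcal{A}e^{-t\mathcal{A}}\|$ at all --- applying $e^{-t\mathcal{A}}$ to $(0,x)$ already bounds $\|tAe^{-tA}x\|$ by $\sup_t\|e^{-t\mathcal{A}}\|$, so both $M_0$ and $M_1$ are controlled by the single quantity $\sup_t\|e^{-t\mathcal{A}}\|$. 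This does not affect the validity of the argument.
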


If $\varphi$ is a
Bernstein function, then by Theorem \ref{subord}, $\varphi(A)$ is
the generator of a bounded $C_0$-semigroup
$(e^{-t\varphi(\mathcal{A})})_{t\ge 0}$ on $\mathcal X.$ Moreover,
subject to a mild restriction, $(e^{-t\varphi(\mathcal{A})})_{t\ge
0}$ has a matrix structure similar to \eqref{AAA} as the next
proposition shows.  \begin{prop} Let $-A$ be the generator of a
bounded analytic $C_0$-semigroup $(e^{-tA})_{t \ge 0}$ on $X$ and
let $\varphi$ be a Bernstein function such that
\begin{equation}\label{der1}
\varphi'(0+)<\infty.
\end{equation}
Then
$(e^{-t\varphi(\mathcal{A})})_{t\ge 0}$ is a bounded $C_0$-semigroup
on $\mathcal{X}$ and
\begin{equation}\label{BerAn}
e^{-t\varphi(\mathcal{A})}=
\left(\begin{array}{cc}
e^{-t \varphi(A)} & -tA\varphi'(A)e^{-t\varphi(A)}\\
0 & e^{-t\varphi(A)}
\end{array}
\right),\quad t>0.
\end{equation}
\end{prop}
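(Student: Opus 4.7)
The plan is to compute $e^{-t\varphi(\mathcal{A})}$ directly from the subordination representation of Theorem \ref{subord} applied to $\mathcal{A}$, and then evaluate the resulting operator-valued integral entrywise using the explicit matrix form \eqref{AAA} of $(e^{-s\mathcal{A}})_{s \ge 0}$. Since $-A$ generates a bounded analytic $C_0$-semigroup, Theorem \ref{crandall} ensures that $-\mathcal{A}$ does as well on $\mathcal{X}$; in particular $-\mathcal{A}$ generates a bounded $C_0$-semigroup, so by Theorem \ref{subord},
\[
e^{-t\varphi(\mathcal{A})} = \int_{0}^{\infty} e^{-s\mathcal{A}}\,\mu_t(ds), \qquad t \ge 0,
\]
where $(\mu_t)_{t \ge 0}$ is the vaguely continuous convolution semigroup of subprobability measures corresponding to $\varphi$.

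Substituting \eqref{AAA} into this integral and integrating blockwise gives
\[
e^{-t\varphi(\mathcal{A})} = \begin{pmatrix} \int_0^\infty e^{-sA}\mu_t(ds) & -\int_0^\infty sAe^{-sA}\mu_t(ds) \\ 0 & \int_0^\infty e^{-sA}\mu_t(ds) \end{pmatrix},
\]
where all Bochner integrals converge absolutely because $\|e^{-sA}\| \le M_0$, $\|sAe^{-sA}\| \le M_1$ uniformly in $s > 0$ and $\mu_t(\mathbb R_+) \le 1$. The diagonal entries equal $e^{-t\varphi(A)}$ by Theorem \ref{subord} applied to $A$.

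It remains to identify the upper-right entry with $-tA\varphi'(A)e^{-t\varphi(A)}$. The assumption $\varphi'(0+) < \infty$ enters at two places. On the one hand, $\varphi'$ is completely monotone and, being decreasing, bounded on $\mathbb R_+$, hence lies in $\Wip(\mathbb C_+)$, so $\varphi'(A)$ is a bounded operator. On the other hand, differentiating the Laplace-transform identity $\int_0^\infty e^{-sz}\mu_t(ds) = e^{-t\varphi(z)}$ and letting $z \to 0+$ yields $\int_0^\infty s\,\mu_t(ds) = t\varphi'(0+) < \infty$, so $s\,\mu_t(ds)$ is a bounded positive measure with Laplace transform
\[
\int_0^\infty s\,e^{-sz}\mu_t(ds) = -\frac{d}{dz}e^{-t\varphi(z)} = t\varphi'(z)e^{-t\varphi(z)}.
\]
Hence $t\varphi'(\cdot)e^{-t\varphi(\cdot)} \in \Wip(\mathbb C_+)$, and the HP calculus identifies
\[
\int_0^\infty s\,e^{-sA}\mu_t(ds) = t\varphi'(A)e^{-t\varphi(A)}.
\]

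Finally, the closedness of $A$ lets us pull $A$ out of the Bochner integral: for every $s > 0$, $se^{-sA}x \in \dom(A)$ with $A(se^{-sA}x) = sAe^{-sA}x$, and since $\|sAe^{-sA}x\| \le M_1\|x\|$ is $\mu_t$-integrable,
\[
\int_0^\infty sAe^{-sA}x\,\mu_t(ds) = A\int_0^\infty se^{-sA}x\,\mu_t(ds) = tA\varphi'(A)e^{-t\varphi(A)}x,
\]
which yields \eqref{BerAn}. The main technical point is the identification of $\int_0^\infty s\,e^{-sA}\mu_t(ds)$ with $t\varphi'(A)e^{-t\varphi(A)}$ via the HP calculus; this is precisely where the hypothesis $\varphi'(0+) < \infty$ is indispensable, since without it $s\,\mu_t(ds)$ need not be finite and its symbol need not lie in $\Wip(\mathbb C_+)$.
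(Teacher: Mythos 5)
Your proof is correct and follows essentially the same route as the paper's: integrate the matrix form of $e^{-s\mathcal A}$ blockwise against the subordinating convolution semigroup, use closedness of $A$ to pull it out of the off-diagonal Bochner integral, and identify $\int_0^\infty s e^{-sA}\,\mu_t(ds)$ with $t\varphi'(A)e^{-t\varphi(A)}$ via the HP calculus and the product rule (the paper phrases this as $[-t\varphi'e^{-t\varphi}](A)=[e^{-t\varphi}]'(A)$, which is the same computation). One small slip: differentiating $\int_0^\infty e^{-sz}\mu_t(ds)=e^{-t\varphi(z)}$ and letting $z\to 0+$ gives $\int_0^\infty s\,\mu_t(ds)=t\varphi'(0+)\,e^{-t\varphi(0)}$, not $t\varphi'(0+)$ (they coincide only if $\varphi(0)=0$, which is not assumed in the proposition); the finiteness you actually need is unaffected.
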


\begin{proof}
By Theorem \ref{subord}, $-\varphi(A)$ is the generator of a
$C_0$-semigroup $(e^{-t\varphi(A)})_{t\ge 0}$ on $X$ and there
exists a convolution semigroup of subprobability measures
$(\nu_t)_{t \ge0}$ on $\mathbb R_+$ such that
\[
e^{-t\varphi(A)}=\int_0^\infty e^{-s A}\,\nu_t(ds)
\]
(where the integral converges strongly). Hence, by Theorem
\ref{crandall},
\begin{eqnarray}\label{matrix}
e^{-t\varphi(\mathcal{A})}&=&
\int_0^\infty
\left(\begin{array}{cc}
e^{-sA} & -sAe^{-sA}\\
0 & e^{-sA}
\end{array}
\right)\,\nu_t(ds)\\
&=&\left(\begin{array}{cc}
e^{-t \varphi(A)} & -\int_0^\infty s A e^{-sA}\,\nu_t(ds)\\
0 & e^{-t\varphi(A)} \notag
\end{array}
\right).
\end{eqnarray}
By \eqref{der1}, we have  $\varphi' \in \Wip(\mathbb C_+).$ Moreover $e^{-t\varphi} \in \Wip(\mathbb C_+)$ as well.
Hence by the product rule for the Hille-Phillips functional calculus we have
\begin{equation*}
-t \varphi'(A) e^{-t\varphi(A)}= [-t
\varphi'e^{-t\varphi}](A)=[e^{-t\varphi}]'(A)=-\int_{0}^{\infty}se^{-sA}
\, \nu_t(ds),
\end{equation*}
and
\[
-\int_0^\infty s A e^{-sA}\,\nu_t(ds) = -A \int_0^\infty  se^{-sA}\, \nu_t(ds)=-tA\varphi'(A) e^{-t\varphi(A)}.
\]
In view of \eqref{matrix}, this yields (\ref{BerAn}).
\end{proof}
\begin{rem}
One can prove that if $\varphi$ is a Bernstein function then  $(e^{-t\varphi(\mathcal{A})})_{t\ge 0}$ is a
bounded analytic $C_0$-semigroup if $(e^{-tA})_{t\ge
0}$ is so. This fact however will not be needed in the sequel. For
its proof as well as other related statements see \cite{GoTo13}.
\end{rem}
The following result  provides convergence rates in approximation formulas for bounded
analytic semigroups on the domains of their generators.
\begin{thm}\label{thtr}
Let  $-A$ be the generator of a  bounded analytic
$C_0$-semigroup $(e^{-tA})_{t\ge 0}$ on $X$, and let $\varphi \in
\Phi.$ Then for all  $t>0$ and $x \in
\dom(A),$
\begin{equation}\label{AnEst1}
\|\Delta^{\varphi}_t(A) x\|\le \left(2M_0+ M_1\right)
|\varphi''(0+)|\|A x\|.
\end{equation}
\end{thm}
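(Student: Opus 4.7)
The plan is to combine the subordination formula from Theorem~\ref{subord} with a Taylor-remainder argument exploiting the moment structure induced by $\varphi \in \Phi$ and the analytic bound $\|Ae^{-rA}\| \le M_1/r$. By Theorem~\ref{subord} there is a vaguely continuous convolution semigroup $(\nu_t)_{t\ge 0}$ on $\mathbb R_+$ with Laplace transform $e^{-t\varphi}$; since $\varphi \in \Phi$, each $\nu_t$ is in fact a probability measure with $\int s\,d\nu_t(s) = t\varphi'(0+) = t$ and (as in the proof of Proposition~\ref{T1Dop}) $\int (s-t)^2\,d\nu_t(s) = t|\varphi''(0+)|$. The first-order term $(s-t)Ae^{-tA}x$ thus integrates to zero and may be inserted to give
\[
\Delta^\varphi_t(A) x = \int_0^\infty \Psi(s)\,d\nu_t(s), \qquad \Psi(s) := (e^{-sA}-e^{-tA})x + (s-t)Ae^{-tA}x.
\]

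For $x \in \dom(A)$ and $s,t>0$, a direct verification (matching derivatives in $s$ and the common value $0$ at $s=t$) yields the Taylor-remainder identity
\[
\Psi(s) = \int_t^s (s-r)\, A^2 e^{-rA} x\,dr
\]
with the signed convention $\int_t^s = -\int_s^t$ when $s<t$; analyticity makes $A^2 e^{-rA} x = Ae^{-rA}(Ax)$ well defined for $r>0$. Combining $\|Ae^{-rA}\| \le M_1/r$ with the elementary inequality
\[
\int_{\min(s,t)}^{\max(s,t)} \frac{|s-r|}{r}\,dr \le \frac{(s-t)^2}{t}, \qquad s,t>0,
\]
(which follows from the standard bound $\log y \le y-1$ applied with $y=s/t$ or $y=t/s$) gives the pointwise estimate $\|\Psi(s)\| \le (M_1/t)(s-t)^2 \|Ax\|$.

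Integrating against $d\nu_t(s)$ and invoking the second-moment identity finally yields
\[
\|\Delta^\varphi_t(A)x\| \le \frac{M_1}{t}\,\|Ax\|\, \int_0^\infty (s-t)^2\,d\nu_t(s) = M_1 |\varphi''(0+)|\,\|Ax\| \le (2M_0+M_1)|\varphi''(0+)|\,\|Ax\|,
\]
which is the claim (with the slightly sharper constant $M_1$ in place of $2M_0+M_1$). The main obstacle is the uniform logarithmic inequality above: it is essential because $\nu_t$ may carry an atom at $0$ (for bounded Bernstein functions such as $\varphi_1,\varphi_2$ one has $\nu_t(\{0\}) = e^{-t\varphi(\infty)} > 0$), so a naive pointwise bound of the form $(s-t)^2/\min(s,t)$ would diverge there; the logarithmic estimate is precisely what produces a $t$-independent control.
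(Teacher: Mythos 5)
Your proof is correct and takes a genuinely different route from the paper. The paper passes through the matrix construction $\mathcal A = \begin{pmatrix} A & A \\ 0 & A\end{pmatrix}$ of Crandall--Pazy--Tartar: it applies the $\alpha=2$ estimate \eqref{Estt11} (from the Hille--Phillips calculus bound of Section~4) to the matrix semigroup $(e^{-t\mathcal A})_{t\ge 0}$ on $X\oplus X$, extracts from the top-right entries the bound $\|e^{-tA}x - \varphi'(A)e^{-t\varphi(A)}x\| \le (M_0+M_1)|\varphi''(0+)|\|Ax\|$, and then separately controls the correction term $\|(1-\varphi'(A))e^{-t\varphi(A)}x\| \le M_0|\varphi''(0+)|\|Ax\|$ via the L\'evy--Khintchine representation of $\varphi'$; the constant $2M_0+M_1$ is the sum. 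You instead work directly with the subordinating probability measure $\nu_t$ (Theorem~\ref{subord}), insert a zero-mean linear correction to reduce the pointwise integrand to a second-order Taylor remainder $\int_t^s (s-r)Ae^{-rA}(Ax)\,dr$, and then combine the analytic bound $\|Ae^{-rA}\|\le M_1/r$ with the uniform inequality $\int_{\min(s,t)}^{\max(s,t)}\frac{|s-r|}{r}\,dr\le (s-t)^2/t$ (equivalent to $\log y\le y-1$). Your observation that this logarithmic estimate is the essential point — a naive bound by $1/\min(s,t)$ would blow up at the atom of $\nu_t$ at $0$ for bounded $\varphi$ — is exactly right. The tradeoffs: your argument is self-contained, more elementary (no matrix lift, no $\Wip(\C_+)$ interpolation), and produces the sharper constant $M_1$ in place of $2M_0+M_1$; the paper's matrix approach, on the other hand, is reused verbatim in Lemma~\ref{leminterm} to get the operator-norm $O(1/t)$ estimate, so the machinery pays off later in the section even though it is heavier here.
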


\begin{proof}
Without loss of generality we may assume that $A$ is injective.
(Otherwise we can proceed as in the proof of Theorem \ref{ThC}.)

Let  $t>0$ be fixed. We apply  \eqref{Estt11} (with $n=1$) to the
semigroup $(e^{-t\mathcal{A}})_{t\ge 0}$ taking into account the
matrix representations (\ref{AAA}) and (\ref{BerAn}) for
$(e^{-t\mathcal{A}})_{t\ge 0}$ and
$(e^{-t\mathcal{\varphi(A})})_{t\ge 0},$
 respectively. If  $x\in \dom(A^2)$ then $y=(0,x)\in \dom(\mathcal{A}^2)$ and
\begin{eqnarray*}
\left\|tAe^{-tA}x-
t A \varphi'(A)e^{-t\varphi(A)}x
\right\|
&\le&
\|e^{-t \varphi(\mathcal{A})}y-e^{-t\mathcal{A}}y\|_{\mathcal{X}}\\
&\le&  2^{-1} t(M_0+M_1) |\varphi''(0+)|\|\mathcal{A}^2y\|_{\mathcal{X}}\\
&\le&  t (M_0+M_1)\,| \varphi''(0+)|\|A^2x\|.
\end{eqnarray*}
Hence for every $x \in \dom (A),$
\begin{equation}\label{Pred1}
\left\|e^{-tA}x-\varphi'(A)e^{-t\varphi(A)}x \right\| \le
(M_0+M_1)|\varphi''(0+)| \|Ax\|.
\end{equation}
Since $\varphi \in \Phi,$
\begin{equation*}
\varphi'(z)=c+ \int_{0+}^{\infty} e^{-zs} \,s \mu(ds), \qquad z
>0,
\end{equation*}
for some $c >0$ and a bounded positive Radon measure $s\cdot \mu$
on $(0,\infty)$ such that
$$\int_{0+}^{\infty}\,s\mu(ds)=1, \qquad \int_{0+}^{\infty} s^2 \, \mu(ds) < \infty.$$
Therefore, by the HP calculus,
\begin{equation*}
(1-\varphi'(A))x=(\varphi'(0+)-\varphi'(A))x=\int_{0+}^\infty
(1-e^{-sA}) x \,s \mu(ds),
\end{equation*}
and then, in view of Theorem \ref{subord},
\begin{equation*}
e^{-t\varphi(A)}(1-\varphi'(A))x =
\int_{0}^{\infty}\int_{0+}^{\infty} (e^{-\tau A}-e^{-(s+\tau)A})x
\,  \, \mu(ds) \nu_t(d\tau)
\end{equation*}
for a  subprobability Radon measure $\nu_t.$ Thus, if $x \in \dom
(A),$ then
\begin{align}\label{Diff1}
 \|e^{-t\varphi(A)}(\varphi'(A)-1)x\| \le&
  \int_{0}^{\infty} \int_{0+}^\infty \| (e^{-\tau A}-e^{-(s+\tau)A}) x \|\,s \mu(ds)\nu_t(d\tau) \\
 =&  \int_{0}^{\infty}
 \int_{0+}^\infty  \int_{\tau}^{s+\tau} \|e^{-r A}Ax \|\,d r \,s\mu(ds) \, d\nu_t(d\tau) \notag\\
 \le& M_0 \|Ax\| \int_{0+}^{\infty} s^2 \, \mu (ds)\notag\\
  =& M_0 \|Ax\| |\varphi''(0+)|.\notag
\end{align}

Combining (\ref{Pred1}) and (\ref{Diff1}), we obtain
\begin{equation*}\label{eet1XXDD}
\left\|e^{-tA}x-e^{-t\varphi(A)}x \right\| \le \left(2M_0+ M_1
\right)\,|\varphi''(0)| \|Ax\|,
\end{equation*}
for $x\in \dom(A),$ which is (\ref{AnEst1}).

\end{proof}

Our aim now is to prove an estimate of the form
\begin{equation}\label{estimatee}
\|e^{-tA} -e^{- t\varphi(A)}\|\le \frac{C}{t}, \qquad \quad t >0.
\end{equation}
This will be done by writing, as in the proof of Theorem \ref{thtr},
\begin{equation}\label{sum}
\|e^{-tA} -e^{- t\varphi(A)}\| \le \|e^{-tA} - \varphi'(A) e^{-
t\varphi(A)}\| +\|(1- \varphi'(A)) e^{- t\varphi(A)}\|
\end{equation}
and estimating each term in the right-hand side of \eqref{sum}
separately. The first term can be handled easily by our techniques.

\begin{lemma}\label{leminterm}
Let  $-A$ be the generator of a  bounded analytic
$C_0$-semigroup $(e^{-tA})_{t\ge 0}$ on $X$ and let  $\varphi \in
\Phi.$ Then
\begin{equation}\label{AnEst1a}
\|e^{-tA} - \varphi'(A) e^{- t\varphi(A)}\|\le \frac{2(M_0+2
M_1)+4M_1^2}{t}|\varphi''(0+)|.
\end{equation}

\end{lemma}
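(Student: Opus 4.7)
The plan is to apply Theorem~\ref{thtr} to the generator $-\mathcal{A}$ of the bounded analytic $C_0$-semigroup on $\mathcal{X} = X \oplus X$ supplied by Theorem~\ref{crandall}, and then to extract the desired operator-norm bound by reading off a single matrix entry. Specifically, Theorem~\ref{thtr} applied to $\mathcal{A}$ yields
\[
\|\Delta^{\varphi}_t(\mathcal{A})\,Y\|_{\mathcal{X}} \le (2\tilde M_0 + \tilde M_1)\,|\varphi''(0+)|\,\|\mathcal{A}Y\|_{\mathcal{X}}, \qquad Y \in \dom(\mathcal{A}),
\]
with $\tilde M_0 = \sup_{t\ge 0}\|e^{-t\mathcal{A}}\|$ and $\tilde M_1 = \sup_{t>0}\|t\mathcal{A}e^{-t\mathcal{A}}\|$. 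The bounds \eqref{tildeM}--\eqref{tildeM1} from Theorem~\ref{crandall} give $2\tilde M_0 + \tilde M_1 \le 2(M_0+M_1)+2M_1+4M_1^2 = 2(M_0+2M_1)+4M_1^2$, which matches precisely the constant claimed in \eqref{AnEst1a}.

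With this in hand, I would pick $Y = (0,y)$ with $y \in \dom(A)$ (so $Y \in \dom(\mathcal{A})$). Then $\mathcal{A}Y = (Ay, Ay)$, hence $\|\mathcal{A}Y\|_{\mathcal{X}} = \|Ay\|$, and from the matrix forms \eqref{AAA} and \eqref{BerAn} the first coordinate of $\Delta^{\varphi}_t(\mathcal{A})Y$ equals
\[
-tA\varphi'(A)e^{-t\varphi(A)}y + tAe^{-tA}y = tA\bigl(e^{-tA} - \varphi'(A)e^{-t\varphi(A)}\bigr)y = t\bigl(e^{-tA} - \varphi'(A)e^{-t\varphi(A)}\bigr)Ay,
\]
where the last equality uses the commutation of $A$ with $e^{-tA}$ and $\varphi'(A)e^{-t\varphi(A)}$ inside the HP calculus. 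Taking the first-coordinate bound and dividing by $t$ gives
\[
\bigl\|\bigl(e^{-tA} - \varphi'(A)e^{-t\varphi(A)}\bigr)z\bigr\| \le \frac{2(M_0+2M_1)+4M_1^2}{t}\,|\varphi''(0+)|\,\|z\|
\]
for every $z \in \ran(A)$ (set $z = Ay$).

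It remains to propagate the estimate from $\ran(A)$ to the whole of $X$. On $N(A)$ the operator $e^{-tA} - \varphi'(A)e^{-t\varphi(A)}$ is identically zero, since $\varphi(0)=0$ and $\varphi'(0+)=1$ force both $e^{-tA}$ and $\varphi'(A)e^{-t\varphi(A)}$ to act as the identity on $N(A)$; on $\overline{\ran(A)}$ the bound follows by continuity of the bounded operator $e^{-tA} - \varphi'(A)e^{-t\varphi(A)}$. The decomposition $X = N(A) \oplus \overline{\ran(A)}$, available for generators of bounded analytic $C_0$-semigroups via mean ergodicity, then delivers \eqref{AnEst1a} on all of $X$. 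The main technical point is precisely this extension step: should the decomposition be regarded as not quite evident in full generality, a clean alternative is to run the matrix argument for the shifted generator $-(A+\varepsilon)$ (which is invertible and whose semigroup satisfies bounds uniform in $\varepsilon>0$) and then let $\varepsilon \to 0+$, at the cost of a small inflation of constants.
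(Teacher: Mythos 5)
Your proposal follows the paper's argument step for step: apply Theorem \ref{thtr} to the matrix operator $\mathcal{A}$, evaluate on $Y=(0,y)$, read off the first coordinate $tA\bigl(e^{-tA}-\varphi'(A)e^{-t\varphi(A)}\bigr)y$, commute $A$ through, divide by $t$, and thereby get the estimate on $\ran(A)$; the constant-tracking via \eqref{tildeM}--\eqref{tildeM1} is also exactly the paper's. The only point of divergence is the final extension from $\ran(A)$ to $X$. The paper invokes a density argument after assuming (without loss of generality, via a shift $A\mapsto A+\delta$ as in Theorem \ref{ThC}) that $A$ is injective, so that $\ran(A)$ is dense; you instead lead with the decomposition $X=N(A)\oplus\overline{\ran(A)}$ together with the observation that the operator vanishes on $N(A)$. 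That decomposition is \emph{not} automatic for every bounded analytic semigroup on a general Banach space -- bounded analyticity ensures uniform boundedness of $\lambda(\lambda+A)^{-1}$, but not strong convergence of the Abel means, which is what the direct sum is equivalent to -- so the decomposition route, as stated, has a gap. You do flag this yourself and offer the $A+\varepsilon$ shift as a fallback; that fallback is precisely the paper's device and makes the argument complete, so on balance your proof is essentially the paper's with the extension step phrased a little more riskily in its primary form.
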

\begin{proof}
As in the proof of Theorem \ref{thtr} we can assume that $A$ is injective.
 If $y=(0,x)\in \dom(\mathcal{A})$, $x\in \dom(A)$, then  applying  (\ref{AnEst1}) to the
bounded analytic semigroup $(e^{-t\mathcal{A}})_{t
\ge 0}$ we get
\begin{eqnarray*}
\left\|tAe^{-tA}x- tA \varphi'(A)
e^{- t\varphi(A)}x
\right\| &\le&  \left\|e^{-t \mathcal A} y-e^{- t\varphi(\mathcal A)}y
\right\|_{\mathcal X}\\
 &\le& \left(2(M_0+2M_1) +4M_1^2\right)|\varphi''(0+)| \|Ax\|,
\end{eqnarray*}
so
\begin{equation*}\label{latter}
\left\|e^{-tA}x- \varphi'(A) e^{- t\varphi(A)}x \right\| \le
\frac{2(M_0+2M_1)+4M_1^2}{t}|\varphi''(0+)| \|x\|.
\end{equation*}
Since $\dom(A)$ is dense in $X,$ the latter inequality holds for
every $x \in X$, and \eqref{AnEst1a} follows.
\end{proof}

To estimate the second term in the right-hand side of \eqref{sum} we need to prove
several auxiliary inequalities.

\begin{lemma}\label{P1G}
Let $p\not\equiv\mbox{const}$ be a Bernstein function and let
\[
q\in L^\infty_{{\rm loc}}(\mathbb R_+) \cap L^1(\R_{+}).
\]
Then
\[
\int_0^\infty e^{-tp(s)}|q(s)|\,ds\le Ct^{-1},\qquad t>0,
\]
where
\[
C=\inf_{a>0}\, \left[\frac{\|q\|_{L^\infty[0,a]}}{p'(a)}+
\frac{\|q\|_{L^1[a,\infty)}}{p(a)}\right].
\]
\end{lemma}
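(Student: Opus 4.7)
The plan is to split the integral at a free parameter $a>0$, handle each piece with a separate elementary estimate, and finally take the infimum. The basic point is that a non-constant Bernstein function $p$ is strictly increasing on $\R_+$ and, moreover, its derivative $p'$ is completely monotone, hence decreasing (and strictly positive, since $p'(a)=0$ would force $p'\equiv 0$, contradicting $p\not\equiv\mathrm{const}$). Also $p(0)\ge 0$ and $p(a)>0$ for $a>0$.

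Fix $a>0$ and write
\[
\int_0^\infty e^{-tp(s)}|q(s)|\,ds = \int_0^a e^{-tp(s)}|q(s)|\,ds + \int_a^\infty e^{-tp(s)}|q(s)|\,ds.
\]
On the interval $[a,\infty)$, the monotonicity of $p$ gives $p(s)\ge p(a)$, and I will use the elementary inequality $e^{-x}\le 1/x$ (valid for $x>0$ since $xe^{-x}\le e^{-1}<1$) with $x=tp(a)$. This yields
\[
\int_a^\infty e^{-tp(s)}|q(s)|\,ds \le e^{-tp(a)}\,\|q\|_{L^1[a,\infty)} \le \frac{\|q\|_{L^1[a,\infty)}}{t\,p(a)}.
\]

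On $[0,a]$, the bound $|q(s)|\le \|q\|_{L^\infty[0,a]}$ is applied, and then the key step is to exploit that $p'$ is decreasing, so $p'(s)\ge p'(a)$ for $s\in[0,a]$. This allows the change of variable $u=p(s)$:
\[
\int_0^a e^{-tp(s)}\,ds \le \frac{1}{p'(a)}\int_0^a e^{-tp(s)}p'(s)\,ds = \frac{1}{p'(a)}\int_{p(0)}^{p(a)} e^{-tu}\,du \le \frac{1}{t\,p'(a)},
\]
where in the last step I use $p(0)\ge 0$ to extend the domain and obtain $\int_0^\infty e^{-tu}\,du = 1/t$. Combining both pieces,
\[
\int_0^\infty e^{-tp(s)}|q(s)|\,ds \le \frac{1}{t}\left[\frac{\|q\|_{L^\infty[0,a]}}{p'(a)}+\frac{\|q\|_{L^1[a,\infty)}}{p(a)}\right],
\]
and taking the infimum over $a>0$ finishes the proof.

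The argument is essentially routine once the two structural facts about Bernstein functions (monotonicity of $p$ and of $p'$) are invoked, and no part really presents an obstacle — the only mildly delicate point is to recognize that $p'$ is decreasing (which is what replaces a direct change of variable by a sharp one-sided estimate), and to apply the correct elementary bound $e^{-x}\le 1/x$ on the tail piece rather than $e^{-x}\le 1$.
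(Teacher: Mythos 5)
Your proof is correct and follows essentially the same route as the paper: split at a free parameter $a>0$, bound the tail by $e^{-tp(a)}\le 1/(tp(a))$, and exploit the monotonicity of $p'$ on the head piece before taking the infimum over $a$. The only cosmetic difference is on $[0,a]$: the paper first derives the linear lower bound $p(s)\ge p'(a)s$ from the monotonicity of $p'$ and then integrates $e^{-tp'(a)s}$, whereas you keep $p$ itself and perform the change of variable $u=p(s)$ after inserting $p'(s)\ge p'(a)$; both yield $\int_0^a e^{-tp(s)}\,ds\le 1/(tp'(a))$ by the same mechanism, so this is a variant of the same argument rather than a different one.
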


\begin{proof}
Since $p$ is Bernstein, $p$ is increasing on $[0,\infty)$, while
$p'$ is decreasing on $(0,\infty).$  Hence, for all  $a>\epsilon
>0,$
\[
p(s)=\int_\epsilon^s p'(t)\,dt+p(\epsilon)\ge p'(a)
(s-\epsilon),\qquad s\in [0,a],
\]
and, letting $\epsilon \to 0+,$ we get
\[
p(s) \ge p'(a) s,\qquad s\in [0,a].
\]
Therefore, taking into account that  $p'$ is strictly positive on
$(0,\infty)$ by assumption,
\begin{eqnarray*}
\int_0^\infty e^{-tp(s)}|q(s)|\,ds &\le& \int_0^a
e^{-tp(s)}|q(s)|\,ds +\int_a^\infty
e^{-tp(s)}|q(s)|\,ds \\
&\le& \|q\|_{L^\infty[0,a]}\int_0^a
e^{-tp'(a)s}\,ds +e^{-tp(a)}\int_a^\infty |q(s)|\,ds\\
&\le& \left[\frac{\|q\|_{L^\infty[0,a]}}{p'(a)}+
\frac{\|q\|_{L^1[a,\infty)}}{p(a)}\right]t^{-1},\qquad a>0,
\end{eqnarray*}
and the statement follows.
\end{proof}

\begin{lemma}\label{lemmaxG}
Let $-A$ be the generator of a  bounded analytic $C_0$-semigroup
$(e^{-tA})_{t \ge 0}$ on a Banach space $X.$
Then
\begin{equation}\label{maxG}
\|(1-e^{-s A})\,e^{-\tau A}\|\le \frac{2M s}{\tau+s},
\end{equation}
where $M$ is given by \eqref{MMG}.
\end{lemma}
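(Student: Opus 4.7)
The plan is to split the argument according to whether $s \ge \tau$ or $s < \tau$ and to bound the left-hand side by a different elementary estimate in each regime. The two bounds will then match the right-hand side $\frac{2Ms}{\tau+s}$ once one invokes the definition $M = \max(2M_0, M_1)$ together with the elementary observation that the envelope $\frac{2Ms}{\tau+s}$ decreases past the crossover point $s=\tau$, where the two natural bounds meet.

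When $s \ge \tau$, the triangle inequality and uniform boundedness of the semigroup give
\begin{equation*}
\|(1-e^{-sA})e^{-\tau A}\| \le \|e^{-\tau A}\| + \|e^{-(\tau+s)A}\| \le 2M_0 \le M.
\end{equation*}
Since $s \ge \tau$ implies $\tau+s \le 2s$, we have $\frac{2Ms}{\tau+s} \ge M$, and the asserted inequality follows in this regime.

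When $s < \tau$, I will exploit the analyticity through the operator-norm identity
\begin{equation*}
(1-e^{-sA})\,e^{-\tau A} \;=\; \int_0^s A\,e^{-(\tau+u)A}\,du,
\end{equation*}
which is valid as a Bochner integral because $\tau>0$ keeps the integrand uniformly bounded. Applying the analytic-semigroup estimate $\|Ae^{-rA}\| \le M_1/r$ and integrating produces $M_1\log(1+s/\tau)$; the elementary inequality $\log(1+x)\le x$ replaces this by $M_1\,s/\tau$. Finally, $s<\tau$ gives $\tau+s < 2\tau$, hence $M_1 s/\tau \le 2M_1 s/(\tau+s) \le 2Ms/(\tau+s)$.

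None of the individual steps is delicate; the real content is the recognition that the two natural bounds — the triangle-inequality bound $2M_0$ and the smoothing bound $M_1 s/\tau$ — are interpolated cleanly by the envelope $\frac{2Ms}{\tau+s}$, and that the specific choice $M=\max(2M_0,M_1)$ in \eqref{MMG} is precisely what makes the two regimes splice together at $s=\tau$. The one point that requires a brief justification is the operator-norm validity of the integral identity above, but this is immediate once one observes $\|Ae^{-(\tau+u)A}\|\le M_1/\tau$ on $[0,s]$.
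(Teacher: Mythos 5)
Your proof is correct and follows essentially the same route as the paper: you establish the two basic bounds $2M_0$ and $M_1 s/\tau$ (the latter via the integral representation and $\|Ae^{-rA}\|\le M_1/r$) and then interpolate to the envelope $2Ms/(\tau+s)$. The only cosmetic difference is that you carry out the interpolation by an explicit case split at $s=\tau$, whereas the paper invokes its elementary inequality \eqref{elementary} with $\beta=1$, which encodes the same dichotomy.
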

\begin{proof}
By assumption for all  $s,\tau>0,$
\begin{eqnarray*}
\|(1-e^{-s A})\,e^{-\tau A}\|\le 2M_0, 
\end{eqnarray*}
and
\begin{eqnarray*}
\|(1-e^{-s A})\,e^{- \tau A}\|\le \int_{\tau}^{s+\tau}
\|Ae^{-rA}\|\,dr \le M_1 \int_{\tau}^{s+\tau}\, \frac{dr}{r}\le
M_1\frac{s}{\tau}.
\end{eqnarray*}
Then, using \eqref{elementary} with $\beta =1$, we obtain
\begin{equation*}
\|(1-e^{-s A})\,e^{-\tau A}\|\le \min\,(2M_0,M_1s\tau^{-1})\le
\frac{2M s}{\tau+s}.
\end{equation*}
\end{proof}

Using Lemma \ref{lemmaxG} and  Proposition \ref{P1G}, we prove now
a statement which can be considered as a generalization of the
second inequality in \eqref{tildeM1}.
\begin{thm}\label{AunboundedG}
Let $\psi$ be a bounded Bernstein  function satisfying
\begin{equation}\label{CGG}
\psi(0)=0,\qquad \psi'(0+)<\infty,
\end{equation}
and let $\varphi\not\equiv \mbox{const}$ be a Bernstein function.
Let $-A$ be the generator of a  bounded analytic $C_0$-semigroup
$(e^{-tA})_{t \ge 0}$ on $X$.  Then
\begin{equation}\label{ABC1G}
\sup_{t>0}\, \|t \psi(A)e^{-t\varphi(A)}\|\le
2M\left[\frac{\psi'(0+)}{\varphi'(1)}+\frac{\psi(1)}{\varphi(1)}\right].
\end{equation}
\end{thm}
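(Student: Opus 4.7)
The plan is to reduce the operator norm estimate to a scalar Laplace-type integral by subordination, and then to invoke in sequence the analyticity bound of Lemma~\ref{lemmaxG} and the integral principle of Lemma~\ref{P1G}.

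First, I unpack both operators. Since $\psi$ is a bounded Bernstein function with $\psi(0)=0$, Lemma~\ref{boundedbe} gives the Levy-Khintchine triple $(0,0,\mu)$ with $\mu$ a bounded positive Radon measure on $(0,\infty)$, and the hypothesis $\psi'(0+)<\infty$ translates to $\int_{0+}^\infty s\,\mu(ds)<\infty$. Theorem~\ref{hpfc.c.bf-fc} together with boundedness of $\psi$ yields $\psi(A)=\int_{0+}^\infty (1-e^{-sA})\,\mu(ds)$ on all of $X$, while Theorem~\ref{subord} furnishes the subordination $e^{-t\varphi(A)}=\int_0^\infty e^{-\tau A}\,\nu_t(d\tau)$. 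Combining these via Fubini,
\[
\psi(A)\,e^{-t\varphi(A)} \;=\; \int_0^\infty\!\!\int_{0+}^\infty (1-e^{-sA})e^{-\tau A}\,\mu(ds)\,\nu_t(d\tau).
\]

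Next, I insert the pointwise analyticity bound of Lemma~\ref{lemmaxG}, $\|(1-e^{-sA})e^{-\tau A}\|\le 2Ms/(s+\tau)$, and use the elementary identity $1/(s+\tau)=\int_0^\infty e^{-u(s+\tau)}\,du$. The Laplace transform formula $\int_0^\infty e^{-u\tau}\,\nu_t(d\tau)=e^{-t\varphi(u)}$ from Theorem~\ref{subordf} and the representation $\int_{0+}^\infty se^{-us}\,\mu(ds)=\psi'(u)$ then collapse the double integral by another round of Fubini to the scalar estimate
\[
\|\psi(A)\,e^{-t\varphi(A)}\| \;\le\; 2M\int_0^\infty e^{-t\varphi(u)}\,\psi'(u)\,du.
\]

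Finally, I apply Lemma~\ref{P1G} with $p=\varphi$ and $q=\psi'$. Because $\varphi\not\equiv\mathrm{const}$ is Bernstein, $\varphi'>0$ strictly on $(0,\infty)$; and $\psi'$ is completely monotone, hence non-negative and decreasing, so $\|\psi'\|_{L^\infty[0,1]}=\psi'(0+)$ and $\|\psi'\|_{L^1[1,\infty)}=\psi(\infty)-\psi(1)$. Choosing $a=1$ in Lemma~\ref{P1G} and multiplying by $2Mt$ gives the inequality \eqref{ABC1G} with the second term replaced by $(\psi(\infty)-\psi(1))/\varphi(1)$. The main obstacle is the final refinement from $\psi(\infty)-\psi(1)$ to the sharper $\psi(1)$ as stated in the theorem: since no universal inequality $\psi(\infty)\le 2\psi(1)$ holds for bounded Bernstein $\psi$, the sharpening is not automatic and must come either from a better tail estimate for $\int_1^\infty e^{-t\varphi(u)}\psi'(u)\,du$ that exploits the complete monotonicity of $\psi'$ (rather than just its $L^1$ norm), or from an integration-by-parts on $[1,\infty)$ combined with the concavity bound $\psi(u)\le u\psi(1)$ for $u\ge 1$ and $e^{-x}\le 1/x$, yielding the pointwise value $\psi(1)$ as a boundary contribution.
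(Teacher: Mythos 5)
Your proposal reproduces the paper's argument faithfully: Levy--Khintchine representation of $\psi(A)$ as an integral of $1-e^{-sA}$, subordination for $e^{-t\varphi(A)}$, Lemma~\ref{lemmaxG} for the kernel bound, the Laplace-transform identity collapsing the double integral to $2M\int_0^\infty e^{-t\varphi(u)}\psi'(u)\,du$, and finally Lemma~\ref{P1G} with $p=\varphi$, $q=\psi'$. All of these steps are correct and identical to the paper's.

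The ``obstacle'' you flag is not a gap in your argument but a slip in the paper. The last display of the paper's proof asserts
\begin{equation*}
\inf_{a>0}\left\{\frac{\|\psi'\|_{L^\infty[0,a]}}{\varphi'(a)}+\frac{\|\psi'\|_{L^1[a,\infty)}}{\varphi(a)}\right\}=\inf_{a>0}\left\{\frac{\psi'(0+)}{\varphi'(a)}+\frac{\psi(a)}{\varphi(a)}\right\},
\end{equation*}
i.e.\ it identifies $\|\psi'\|_{L^1[a,\infty)}$ with $\psi(a)$. Since $\psi'\ge0$, the correct value is $\int_a^\infty\psi'(s)\,ds=\psi(\infty)-\psi(a)$, exactly as you compute. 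Taking $a=1$, the method yields
\begin{equation*}
\sup_{t>0}\,t\,\|\psi(A)e^{-t\varphi(A)}\|\le 2M\left[\frac{\psi'(0+)}{\varphi'(1)}+\frac{\psi(\infty)-\psi(1)}{\varphi(1)}\right],
\end{equation*}
and nothing more. There is no ``sharpening'' to $\psi(1)$ available, nor is one needed: the $\psi(1)$ in \eqref{ABC1G} appears to be a typo for $\psi(\infty)-\psi(1)$. This reading is corroborated by Corollary~\ref{C11G}: there $\psi=1-\varphi'$ with $\varphi'(0+)=1$, so $\psi(\infty)-\psi(1)=\varphi'(1)-\varphi'(\infty)\le\varphi'(1)$, which matches the term $\varphi'(1)/\varphi(1)$ printed in the corollary, whereas the literal $\psi(1)=1-\varphi'(1)$ would not. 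So drop your speculations about a better tail estimate or integration by parts; instead record the result with $\psi(\infty)-\psi(1)$ in the numerator (or equivalently leave the infimum in place and note the choice $a=1$), and your proof is complete and in fact corrects the paper.
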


\begin{proof}
Let us start with establishing appropriate integral
representations for $\psi(A)$ and $e^{-t\varphi(A)}, t\ge 0.$

 Since $\psi$ is bounded on $\mathbb R_+$
and $\psi(0)=0,$ the Levy-Hintchine representation of $\psi$ is of
the form
\begin{equation*}\label{derG}
\psi(z)=\int_{0+}^\infty (1 -e^{-zs}) \,\gamma(ds),\quad z>0,
\end{equation*}
where $\gamma$ is a bounded positive Radon measure on
$(0,\infty).$ Then by the HP calculus,
\begin{equation}\label{berG}
\psi(A)=\int_{0+}^\infty (1 -e^{-sA}) \,\gamma(ds),\quad z>0.
\end{equation}

Moreover, by Theorem \ref{subordf},
\begin{equation}\label{AGG}
e^{-t\varphi(z)}=\int_0^\infty e^{- z\tau}\,\nu_t(d\tau),\quad
z>0,
\end{equation}
where $(\nu_t)_{t \ge 0}$ is  a convolution  semigroup of
subprobability Radon measures on $\mathbb R_+.$ From  Theorem
\ref{subord} it follows that $-\varphi (A)$ is the generator of a
bounded $C_0$-semigroup $(e^{-t\varphi(A)})_{t \ge 0}$ given by
\begin{equation}\label{AG}
e^{-t\varphi(A)}=\int_0^\infty e^{- \tau A}\,\nu_t(d\tau),\quad
z>0.
\end{equation}
 Fix $t>0.$ Then
\eqref{berG}, \eqref{AG} and Fubini's theorem imply that
\begin{eqnarray*}
\psi(A)e^{-t\varphi(A)}&=& \int_{0+}^\infty (1-e^{-s
A})\,\gamma(ds)
\int_0^\infty e^{-\tau A}\,\nu_t(d\tau)\\
& =&\int_{0+}^\infty \int_0^\infty (1-e^{-s A})\, e^{-\tau
A}\,\nu_t(d\tau)\,\gamma(ds).
\end{eqnarray*}
Hence, using Lemma \ref{lemmaxG},  we get
\begin{equation}\label{mainG}
\|\psi(A)e^{-t\varphi(A)}\| \le 2M\int_{0+}^\infty s \int_0^\infty
\,\frac{\nu_t(d\tau)}{s+\tau}\,\gamma(ds).
\end{equation}
On the other hand, by (\ref{AGG}), we have
\[
\int_0^\infty \frac{\nu_t(d\tau)}{s+\tau}= \int_0^\infty
e^{-zs}e^{-t\varphi(z)}\,dz,\quad t>0,\quad s>0,
\]
so, in view of (\ref{berG}),
\begin{eqnarray*}
\int_{0+}^\infty s \int_0^\infty
\,\frac{\nu_t(d\tau)}{s+\tau}\,\gamma(ds)&=&\int_{0+}^\infty s
\int_0^\infty e^{-zs}e^{-t\varphi(z)}\,dz \,\gamma(ds)\\
&=&\int_0^\infty e^{-t\varphi(z)} \int_{0+}^\infty s e^{-zs}\,
\,\gamma(ds)\,dz\\
&=&\int_0^\infty e^{-t\varphi(z)} \psi'(z)\,dz.
\end{eqnarray*}
From here by (\ref{mainG}) it follows that
\begin{equation}\label{mainG1}
\|\psi(A) e^{-t\varphi(A)}\| \le 2M \int_0^\infty
e^{-t\varphi(z)}\psi'(z)\,dz,
\end{equation}
where, by Lemma \ref{boundedbe}, $\psi'  \in L^\infty_{{\rm
loc}}(\mathbb R_+) \cap L^1(\R_{+}).$

 Now  \eqref{mainG1} and Lemma
\ref{P1G} with $p=\varphi$ and $q=\psi'$ yield
\[
t\|\psi(A)e^{-t\varphi(A)}\|\le 2M
\inf_{a>0}\,\left\{\frac{\|\psi'\|_{L^\infty[0,a]}}{\varphi'(a)}+
\frac{\|\psi'\|_{L^1[a,\infty)}}{\varphi(a)}\right\},\qquad t>0.
\]
It remains to note that
\begin{eqnarray*}
\inf_{a>0}\,\left\{\frac{\|\psi'\|_{L^\infty[0,a]}}{\varphi'(a)}+
\frac{\|\psi'\|_{L^1[a,\infty)}}{\varphi(a)}\right\}&=& \inf_{a>0}\,
\left\{\frac{\psi'(0+)}{\varphi'(a)}+
\frac{\psi(a)}{\varphi(a)}\right\} \\
&\le& \frac{\psi'(0+)}{\varphi'(1)}+\frac{\psi(1)}{\varphi(1)}.
\end{eqnarray*}
\end{proof}

In particular, Theorem \ref{AunboundedG} provides  a desired estimate of the second term
in  the right-hand of \eqref{sum}.
\begin{cor}\label{C11G}
Let $\varphi$ be a Bernstein function such that
\begin{equation}\label{varphi}
\varphi'(0+)=1,\quad |\varphi''(0+)|<\infty.
\end{equation}
Let $-A$ be the generator of a  bounded analytic
$C_0$-semigroup $(e^{-tA})_{t \ge 0}$ on $X$. Then
\begin{equation*}\label{ABC1G11}
\|(1-\varphi'(A))e^{-t\varphi(A)}\|\le
\frac{2M}{t}\left[\frac{|\varphi''(0)|}{\varphi'(1)}+
\frac{\varphi'(1)}{\varphi(1)}\right],
\end{equation*}
for all $t >0$.
\end{cor}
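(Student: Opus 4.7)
The plan is to apply Theorem \ref{AunboundedG} with the bounded Bernstein function $\psi := 1 - \varphi'$, so that $\psi(A) = 1 - \varphi'(A)$ (the identification $\varphi'(A)$ as an element of the HP calculus is legitimate once $\varphi'(0+)<\infty$, since then $\varphi'\in \Wip(\mathbb{C}_+)$, exactly as used in the proof of Theorem \ref{thtr}). Thus the entire task reduces to verifying that $1-\varphi'$ is a bounded Bernstein function meeting the hypotheses of Theorem \ref{AunboundedG}, and then extracting the precise shape of the constant.

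The first step is to unfold the Levy--Khintchine representation of $\varphi$. Writing $\varphi\sim(a,b,\mu)$ gives $\varphi'(z)=b+\int_{0+}^\infty s\,e^{-zs}\,\mu(ds)$, and the normalization $\varphi'(0+)=1$ (so that $b+\int_{0+}^\infty s\,\mu(ds)=1$) yields the explicit representation
$$\psi(z)=1-\varphi'(z)=\int_{0+}^\infty s\,(1-e^{-zs})\,\mu(ds).$$
This exhibits $\psi$ as Bernstein with Levy--Khintchine triple $(0,0,s\,\mu(ds))$. Because $s\,\mu(ds)$ is a positive measure of total mass $1-b\le 1$, Lemma \ref{boundedbe} tells us that $\psi$ is bounded; clearly $\psi(0)=0$; and differentiating under the integral gives $\psi'(0+)=\int_{0+}^\infty s^2\,\mu(ds)=|\varphi''(0+)|$, which is finite by assumption. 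So $\psi$ satisfies all the hypotheses of Theorem \ref{AunboundedG}.

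With $\psi$ in hand, Theorem \ref{AunboundedG} applied to the pair $(\psi,\varphi)$ immediately yields the desired $1/t$ decay. The only subtlety is producing the second summand as $\varphi'(1)/\varphi(1)$ rather than $\psi(1)/\varphi(1)=(1-\varphi'(1))/\varphi(1)$. For this I would re-enter the proof of Theorem \ref{AunboundedG} at the point where Lemma \ref{P1G} is invoked with $p=\varphi$ and $q=\psi'$: since $\psi'=-\varphi''$, complete monotonicity of $-\varphi''$ gives $\|\psi'\|_{L^\infty[0,1]}=\psi'(0+)=|\varphi''(0+)|$, while
$$\|\psi'\|_{L^1[1,\infty)}=\int_1^\infty(-\varphi''(z))\,dz=\varphi'(1)-\varphi'(\infty)\le \varphi'(1).$$
Evaluating Lemma \ref{P1G} at $a=1$ then delivers precisely the constant stated in the corollary, the factor $2M$ coming directly from the estimate of Theorem \ref{AunboundedG}.

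The main (and modest) obstacle is recognizing $1-\varphi'$ as a Bernstein function and reading off its Levy--Khintchine data; the tail bound on $-\varphi''$ needed to sharpen the $\psi(1)$ term to $\varphi'(1)$ is then a one-line computation, and everything else is mechanical bookkeeping from Theorem \ref{AunboundedG}.
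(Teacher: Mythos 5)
Your proof is correct and follows the paper's own route: set $\psi=1-\varphi'$, verify it is a bounded Bernstein function satisfying the hypotheses of Theorem~\ref{AunboundedG} (your Levy--Khintchine reading $\psi\sim(0,0,s\,\mu(ds))$ is exactly right), and invoke that theorem. You are in fact more careful than the paper on one point: a literal application of Theorem~\ref{AunboundedG} as stated would give the second summand $\psi(1)/\varphi(1)=(1-\varphi'(1))/\varphi(1)$, not $\varphi'(1)/\varphi(1)$, and these genuinely differ (e.g.\ $\varphi(z)=1-e^{-z}$ has $\varphi'(1)=e^{-1}<1/2$). The discrepancy is a slip in the proof of Theorem~\ref{AunboundedG}, where $\|\psi'\|_{L^1[a,\infty)}$ is written as $\psi(a)$ rather than the correct value $\psi(\infty)-\psi(a)$; your re-entry into the argument and the computation $\|\psi'\|_{L^1[1,\infty)}=\int_1^\infty(-\varphi'')\,dz=\varphi'(1)-\varphi'(\infty)\le\varphi'(1)$ is precisely what is needed to recover the constant actually stated in the corollary, and it quietly fixes the glitch.
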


\begin{proof}
Note that by \eqref{varphi}, the function $\psi(z)=1-\varphi'(z),
z>0,$ is Bernstein and satisfies \eqref{CGG}. Moreover, $\psi$ is
bounded since $\varphi'$ is bounded by \eqref{varphi}. Hence to
get \eqref{ABC1G11} it suffices to apply Theorem \ref{AunboundedG}
to a Bernstein function $\varphi$
 and a bounded Bernstein function $\psi$.
\end{proof}

As an immediate consequence of \eqref{sum}, Lemma \ref{leminterm} and Corollary
\ref{C11G} we obtain \eqref{estimatee} which is one of the main
results of the paper.
\begin{thm}\label{mainanalytic1}
Let $-A$ be the generator of a  bounded analytic
$C_0$-semigroup $(e^{-tA})_{t \ge 0}$ on $X$ and let $\varphi \in
\Phi.$ Then
\begin{equation}\label{wholespace}
\|e^{-tA}x -e^{- t\varphi(A)}x\|\le \frac{C}{t}, \qquad
x\in X,
\end{equation}
for all $t >0$, where
\begin{equation*}
C=2(M_0+2M_1+ 2M_1^2)|\varphi''(0+)|+
2M\left[\frac{|\varphi''(0+)|}{\varphi'(1)}+
\frac{\varphi'(1)}{\varphi(1)}\right].
\end{equation*}
\end{thm}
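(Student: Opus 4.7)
The plan is to combine the triangle inequality decomposition \eqref{sum} with the two preparatory estimates that have just been established, namely Lemma \ref{leminterm} and Corollary \ref{C11G}. Since the whole mechanism (matrix amplification, interpolation in $\Wip(\C_+)$, and the auxiliary bound of Theorem \ref{AunboundedG}) has already been deployed in the preceding results, the theorem itself is essentially a bookkeeping step, as the author signals with the phrase \emph{``immediate consequence''}.

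Concretely, I would first write, for any $x \in X$ and $t > 0,$
\[
\|e^{-tA}x - e^{-t\varphi(A)}x\| \le \|e^{-tA}x - \varphi'(A)e^{-t\varphi(A)}x\| + \|(1 - \varphi'(A))e^{-t\varphi(A)}x\|,
\]
which is just \eqref{sum} applied pointwise. The first summand is controlled by Lemma \ref{leminterm}, giving the bound
\[
\|e^{-tA} - \varphi'(A)e^{-t\varphi(A)}\| \le \frac{2(M_0 + 2M_1) + 4M_1^2}{t}\,|\varphi''(0+)|.
\]
The second summand is controlled by Corollary \ref{C11G}, which applies since $\varphi \in \Phi$ ensures $\varphi'(0+) = 1$ and $|\varphi''(0+)| < \infty$, yielding
\[
\|(1 - \varphi'(A))e^{-t\varphi(A)}\| \le \frac{2M}{t}\left[\frac{|\varphi''(0+)|}{\varphi'(1)} + \frac{\varphi'(1)}{\varphi(1)}\right].
\]

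Adding the two displays and rearranging $2(M_0 + 2M_1) + 4M_1^2 = 2(M_0 + 2M_1 + 2M_1^2)$ produces exactly the stated constant $C$, and the supremum over $\|x\| = 1$ then delivers \eqref{wholespace}. Since Lemma \ref{leminterm} and Corollary \ref{C11G} are already in hand, there is no genuine obstacle at this stage; the only thing to verify is that the algebraic combination of the two constants matches the $C$ written in the statement, which it does after the trivial regrouping above. (The real work — namely the matrix-model argument of Lemma \ref{leminterm} relying on Theorem \ref{thtr} applied to $(e^{-t\mathcal A})_{t \ge 0}$, and the Bernstein/Stieltjes estimate of Theorem \ref{AunboundedG} underlying Corollary \ref{C11G} — has already been carried out.)
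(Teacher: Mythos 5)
Your proof is correct and follows exactly the same route the paper takes: apply the decomposition \eqref{sum}, invoke Lemma \ref{leminterm} for the first summand and Corollary \ref{C11G} for the second, and add. The regrouping $2(M_0+2M_1)+4M_1^2 = 2(M_0+2M_1+2M_1^2)$ indeed recovers the stated constant $C$.
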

\begin{rem}
It is interesting to note that Theorem \ref{mainanalytic1} provides an estimate for
the difference of semigroups $(e^{-tA})_{t \ge 0}$
and $(e^{- t\varphi(A)})_{t \ge 0}$ at infinity. If $\varphi$ is bounded,
then $\varphi(A)$ is a bounded operator,
and $(e^{-t A})_{t \ge 0}$ is asymptotically close to the semigroup
$(e^{- t\varphi(A)})_{t \ge 0}$ with bounded generator.
\end{rem}

We continue with ``interpolating''  between \eqref{AnEst1} and
\eqref{wholespace}. To this aim we prove an operator analogue of
Theorem \ref{FP}, which is of independent interest.

\begin{thm}\label{FPAn}
Let $-A$ be the generator of a bounded $C_0$-semigroup
$(e^{-tA})_{t \ge 0}$ on $X,$ with dense range. Let $B$ and
$A^{-1}B$ be bounded linear operators on $X$, and denote
\begin{equation*}\label{conditionAn}
\|B\|=a \qquad \text{and} \qquad  \|A^{-1} B\|=b.
\end{equation*}
Then for any Stieltjes function $f,$ the operator $f(A)B$ is
bounded and
\begin{equation}\label{resAn}
\|f(A)B\|\le 2(1+M_0)a f(a/b),
\end{equation}
where $M_0 :=\sup_{t \ge 0}\|e^{-tA}\|.$
\end{thm}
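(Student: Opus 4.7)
The plan is to transfer the scalar argument behind Theorem \ref{FP} to the operator setting, with the two competing bounds on $\|v_{\beta,\tau}F\|_{\Wip(\C_+)}$ replaced by two competing bounds on the operator $(A+\tau)^{-1}B$. Note first that, as remarked after Proposition \ref{resolventst}, the density of $\ran(A)$ forces $A$ to be injective, so $A^{-1}$ makes sense as a closed operator; moreover, the hypothesis that $A^{-1}B$ is bounded on $X$ means exactly that $\ran(B)\subseteq \ran(A) = \dom(A^{-1})$. This inclusion is what will let me feed Proposition \ref{resolventst} with the vector $By$ for arbitrary $y\in X$.

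The key step is to estimate $\|(A+\tau)^{-1}B\|$ for each $\tau>0$ in two ways. On the one hand, since $(A+\tau)^{-1} = \int_0^\infty e^{-\tau t}e^{-tA}\,dt$, one has $\|(A+\tau)^{-1}\|\le M_0/\tau$, which gives
\[
\|(A+\tau)^{-1}B\|\le \frac{M_0 a}{\tau}.
\]
On the other hand, writing $(A+\tau)^{-1}B = (I-\tau(A+\tau)^{-1})A^{-1}B$ yields
\[
\|(A+\tau)^{-1}B\|\le (1+M_0)\,b.
\]
Combining these via $\min(M_0 a/\tau,(1+M_0)b)\le (1+M_0)\min(a/\tau,b)$ and then invoking the elementary inequality \eqref{elementary} with $\beta=1$ (in the form $\min(\tau^{-1},(a/b)^{-1})\le 2/(\tau+a/b)$), I obtain the single bound
\[
\|(A+\tau)^{-1}B\|\le \frac{2(1+M_0)a}{\tau+a/b},\qquad \tau>0.
\]

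With this in hand, I decompose the Stieltjes function via its representation \eqref{hpfc.e.stieltjes}, namely $f(z)=c_0/z+c_1+f_0(z)$ with $f_0(z)=\int_{0+}^\infty (z+s)^{-1}\nu(ds)$, where $c_0,c_1\ge 0$ and $\nu$ is a positive Radon measure. For any $y\in X$, the inclusion $\ran(B)\subseteq \ran(A)$ combined with Proposition \ref{resolventst} applied to the Stieltjes function $f_0\sim(0,0,\nu)$ gives the representation
\[
f_0(A)By=\int_{0+}^\infty (s+A)^{-1}By\,\nu(ds),
\]
with the integral converging absolutely in operator norm by virtue of the preceding bound. Taking norms under the integral then yields
\[
\|f_0(A)B\|\le \int_{0+}^\infty \frac{2(1+M_0)a}{s+a/b}\,\nu(ds)=2(1+M_0)a\,f_0(a/b).
\]
Adding the trivial estimates $\|c_0 A^{-1}B\|\le c_0 b$ and $\|c_1 B\|\le c_1 a$ and using $2(1+M_0)\ge 1$ together with the identity $f(a/b)=c_0 b/a+c_1+f_0(a/b)$, I conclude
\[
\|f(A)B\|\le c_0 b+c_1 a+2(1+M_0)a\,f_0(a/b)\le 2(1+M_0)a\,f(a/b),
\]
which is \eqref{resAn}.

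The only subtle point is the justification that the operator defined by the vector-valued Bochner integral actually equals $f_0(A)B$; this is precisely what Proposition \ref{resolventst} provides, once one has observed that $By\in \ran(A)$ for every $y\in X$. Everything else is a straightforward transcription of the proof of Theorem \ref{FP}, with norm estimates on the resolvent playing the role of $\Wip(\C_+)$-norm estimates on the functions $v_{\beta,\tau}$.
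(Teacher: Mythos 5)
Your proof is correct and follows essentially the same route as the paper's: you decompose $f=c_0/z+c_1+f_0$, invoke Proposition \ref{resolventst} to get the Bochner integral representation of $f_0(A)B$ (using $\ran(B)\subseteq\ran(A)$), bound $\|(\tau+A)^{-1}B\|$ by the minimum of $M_0 a/\tau$ and $(1+M_0)b$, pass to $2(1+M_0)a/(\tau+a/b)$ via \eqref{elementary}, and integrate. The only cosmetic difference is that you write $(A+\tau)^{-1}B=(I-\tau(A+\tau)^{-1})A^{-1}B$ where the paper factors through $A(\tau+A)^{-1}\cdot A^{-1}B$, which yields the same bound.
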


\begin{proof}
Let a Stieltjes $f_0$ be of the form
\begin{equation}\label{f0}
f_0(z)=\int_{0+}^\infty \frac{\nu(d\tau)}{z+\tau},
\end{equation}
where $\nu$ is a Radon measure on $(0,\infty)$ satisfying
\begin{equation*}
\int_{0+}^\infty \frac{\nu(d\tau)}{1+\tau}<\infty.
\end{equation*}
 First
note that by Proposition \ref{resolventst},
\[
f_0(A)x=\int_{0+}^\infty (\tau+A)^{-1}x\,\nu(d\tau)
\]
for every $x\in \ran(A)\subset \dom(f(A)).$ Since $A^{-1}B$ is
bounded we have $\ran(B)\subset \dom(A^{-1})=\ran(A)$ and
\begin{equation}\label{repdeltaAn}
f_0(A)Bx=\int_{0+}^\infty (\tau+A)^{-1} B x\,\nu(d\tau),\quad x\in
X.
\end{equation}
Now we estimate the operator kernel $(\tau+A)^{-1}B$ in the
integral transform above. Observe that
\[
\|(\tau+A)^{-1} B\|\le \frac{M_0a}{\tau},\quad \tau>0.
\]
On the other hand,
\[
\|(\tau+A)^{-1}B\|  \le \|A(\tau+A)^{-1}\|\|A^{-1}B\|\le (1+M_0)
b.
\]
Then, using \eqref{elementary} with $\beta=1$, we obtain
\begin{equation}\label{12aAn}
\|(\tau+A)^{-1} B\|\le \min\, (M_0a \tau^{-1}, (1+M_0) b)
 \le \frac{2(1+M_0)
a}{a/b+\tau}.
\end{equation}

Now \eqref{repdeltaAn} and \eqref{12aAn} imply
\begin{equation}\label{f00}
\|f_0(A)B\|\le \int_{0+}^\infty \|(\tau+A)^{-1}B\|\,\nu(d\tau)\le
\, 2(1+M_0)a f_0(a/b).
\end{equation}
Finally, if  $f$ is Stieltjes then  $f(z)=c_0z^{-1}+c_1+f_0(z),$ where  $c_0,c_1 \ge 0$ and $f_0$ is as in \eqref{f0}.
Hence, by \eqref{f00},
\begin{eqnarray*}
\|f(A)B\| &\le& c_0b + c_1 a + 2(1+M_0) a f_0(a/b)\\
&=&  2a(1+M_0)  \left(\frac{c_0}{2(1+M_0)}b/a+ \frac{c_1}{2(1+M_0)}+f_0(a/b)\right)\\
&\le& 2a(1+M_0) f(a/b).
\end{eqnarray*}
\end{proof}

\begin{rem}
Recall that a closed densely defined linear operator $A$ on $X$ is called sectorial if  $(-\infty,0)\subset \rho (A)$ and
\[
K:=\sup_{\lambda>0}\, \lambda \|(\lambda+A)^{-1}\|<\infty.
\]
As, by \cite[Theorem 2.5,{\it (ii)}]{GoTo}, Proposition \ref{resolventst} holds for sectorial operators $A$
with dense range, the inequality  \eqref{resAn} is true for such operators too if
 $M_0$ is replaced by
$K.$ The proof remains the same.
\end{rem}

Since the functions $z^{-\alpha}, \alpha \in [0,1],$ are
Stieltjes, the next corollary is a direct consequence of Theorem
\ref{FPAn} and is similar to Corollary \ref{coralpha}.
\begin{cor}\label{coralphaAn}
Under the assumptions of Theorem \ref{FPAn}, if $\alpha\in [0,1],$
then
\begin{equation}\label{res1An}
\|A^{-\alpha} B\|\le  2(1+M_0) a^{1-\alpha} b^\alpha.
\end{equation}
\end{cor}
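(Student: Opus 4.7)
The plan is to invoke Theorem \ref{FPAn} with the choice $f(z)=z^{-\alpha}$ for $\alpha\in[0,1]$, since the corollary as stated in the paper promises that the result is "a direct consequence of Theorem \ref{FPAn}." The output $f(A)B$ will then coincide with $A^{-\alpha}B$ in the extended HP calculus, and evaluating $f$ at the point $a/b$ produces exactly the interpolation exponent we want.

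First I would verify the Stieltjes property of $f(z)=z^{-\alpha}$ on $[0,1]$. For $\alpha\in(0,1)$ the classical identity
\[
z^{-\alpha}=\frac{\sin(\pi\alpha)}{\pi}\int_{0+}^{\infty}\frac{s^{-\alpha}}{z+s}\,ds,\qquad z>0,
\]
exhibits $z^{-\alpha}$ as a Stieltjes function with $a=b=0$ and $\nu(ds)=\frac{\sin(\pi\alpha)}{\pi}s^{-\alpha}\,ds$ (the integrability condition $\int_{0+}^\infty (1+s)^{-1}s^{-\alpha}\,ds<\infty$ is satisfied). The endpoint cases are trivial: $\alpha=0$ gives the constant Stieltjes function with Stieltjes triple $(0,1,0)$, while $\alpha=1$ gives $z^{-1}$, Stieltjes with triple $(1,0,0)$. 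Hence $f\in\mathcal S_1$ throughout $[0,1]$, and Theorem \ref{FPAn} applies.

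Second, I would apply Theorem \ref{FPAn} directly to conclude that $f(A)B=A^{-\alpha}B$ is bounded and
\[
\|A^{-\alpha}B\|\le 2(1+M_0)\,a\,f(a/b)=2(1+M_0)\,a\,(a/b)^{-\alpha}=2(1+M_0)\,a^{1-\alpha}b^{\alpha},
\]
which is precisely \eqref{res1An}. The only small technicality is to note that the hypothesis of Theorem \ref{FPAn} requires $A$ to have dense range, so $A^{-\alpha}$ is well-defined in the extended HP calculus by the discussion in Section \ref{fcc} (injectivity of $A$ follows from dense range via the mean ergodic theorem, and the fractional-power notation is consistent with the Stieltjes-function symbol $z^{-\alpha}$).

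There is no genuine obstacle here; the proof is essentially a single-line application once the Stieltjes representation of $z^{-\alpha}$ is recorded and the identification $f(A)=A^{-\alpha}$ is acknowledged. If desired, one may separate the endpoints $\alpha\in\{0,1\}$ from the generic case $\alpha\in(0,1)$ for clarity, but both reduce to the same bound and both are already covered by the argument above.
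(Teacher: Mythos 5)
Your proposal is correct and follows exactly the paper's own route: the paper's proof of Corollary \ref{coralphaAn} is precisely the one-line observation that $z^{-\alpha}$ is Stieltjes for $\alpha\in[0,1]$, so one may plug $f(z)=z^{-\alpha}$ into the bound $\|f(A)B\|\le 2(1+M_0)\,a\,f(a/b)$ of Theorem \ref{FPAn} and simplify $a\,(a/b)^{-\alpha}=a^{1-\alpha}b^\alpha$. Your explicit Stieltjes representation of $z^{-\alpha}$ and the endpoint checks are fine, just spelling out details the paper leaves implicit.
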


With a special choice of $B,$ Corollary \ref{coralphaAn} becomes
the next approximation estimate.

\begin{thm}\label{IntAn}
Let  $-A$ be the generator of a  bounded analytic
$C_0$-semigroup $(e^{-tA})_{t\ge 0}$ on $X,$ with dense range. If
$\varphi \in \Phi$ and $\alpha \in [0,1]$ then there exists
$C>0$ such that
\begin{equation}\label{EstS1exp1An}
\|A^{-\alpha} \Delta^{\varphi}_{t}(A)\|\le
\frac{C}{t^{1-\alpha}}
\end{equation}
for all $t>0$.
\end{thm}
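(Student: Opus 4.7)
The proof will be a direct application of Corollary \ref{coralphaAn} with $B := \Delta^{\varphi}_t(A)$, interpolating between the endpoint cases $\alpha=0$ (Theorem \ref{mainanalytic1}) and $\alpha=1$ (Theorem \ref{thtr}). The plan is as follows.

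First I would verify the hypotheses of Corollary \ref{coralphaAn} for $B=\Delta^{\varphi}_t(A)$. Boundedness of $B$ is immediate. For boundedness of $A^{-1}B$, note that since $A$ has dense range it is injective, so $A^{-1}$ is a well-defined (possibly unbounded) closed operator that fits into the extended HP functional calculus via the regularizers $v_{1,1}, u_{1,1}\in \Wip(\C_+)$ (cf.\ the discussion around \eqref{regular}). From Theorem \ref{thtr},
\[
\|\Delta^{\varphi}_t(A)A^{-1}y\|\le (2M_0+M_1)|\varphi''(0+)|\,\|y\|,\qquad y\in \ran(A),
\]
so $\Delta^{\varphi}_t(A)A^{-1}$ extends uniquely to a bounded operator on $X$ with norm at most $C_1:=(2M_0+M_1)|\varphi''(0+)|$, independently of $t$. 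Invoking the product rule \eqref{hpfc.e.prod} with $g=\Delta^{\varphi}_t\in \Wip(\C_+)$ and $f(z)=z^{-1}$ gives the inclusion $\Delta^{\varphi}_t(A)A^{-1}\subseteq A^{-1}\Delta^{\varphi}_t(A)=(z^{-1}\Delta^{\varphi}_t)(A)$. Because $\varphi\in\Phi$ forces $\Delta^{\varphi}_t(z)=O(z^2)$ as $z\to 0$, the function $z^{-1}\Delta^{\varphi}_t$ still lies in $\Wip(\C_+)$, so the right-hand side is bounded; the two operators then agree on the dense range of $A$ and hence everywhere. Consequently $\|A^{-1}B\|\le C_1$ uniformly in $t>0$.

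Next I would record the endpoint bound on $\|B\|$ itself. Theorem \ref{mainanalytic1} yields $\|B\|=\|\Delta^{\varphi}_t(A)\|\le C_0/t$ with an explicit constant $C_0$ depending only on $M_0,M_1,\varphi''(0+),\varphi'(1),\varphi(1)$.

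Finally, applying Corollary \ref{coralphaAn} with the parameters $a=\|B\|\le C_0/t$ and $b=\|A^{-1}B\|\le C_1$, for every $\alpha\in[0,1]$ and $t>0$ we obtain
\[
\|A^{-\alpha}\Delta^{\varphi}_t(A)\|\le 2(1+M_0)\,a^{1-\alpha}b^{\alpha}\le 2(1+M_0)\,C_0^{1-\alpha}C_1^{\alpha}\,t^{-(1-\alpha)}=\frac{C}{t^{1-\alpha}},
\]
which is the required estimate. The only delicate point in this plan is identifying the two a priori different expressions $\Delta^{\varphi}_t(A)A^{-1}$ and $A^{-1}\Delta^{\varphi}_t(A)$ as the same bounded operator; once this functional-calculus bookkeeping is done, the rest is a one-line application of the interpolation corollary, and no further computation is needed.
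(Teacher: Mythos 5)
Your proposal is correct and follows exactly the same route as the paper: set $B=\Delta^\varphi_t(A)$, take the endpoint bounds $\|B\|\le C_0/t$ from Theorem \ref{mainanalytic1} and $\|A^{-1}B\|\le C_1$ from Theorem \ref{thtr}, and apply Corollary \ref{coralphaAn}. The only difference is that you spell out the functional-calculus bookkeeping identifying $\Delta^\varphi_t(A)A^{-1}$ with $A^{-1}\Delta^\varphi_t(A)=(z^{-1}\Delta^\varphi_t)(A)$ (a detail the paper leaves implicit), and that step is justified correctly via the product rule \eqref{hpfc.e.prod} and Corollary \ref{corrr} which ensures $z^{-1}\Delta^\varphi_t\in\Wip(\C_+)$.
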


\begin{proof}
For a fixed $t>0$ set
 $B=
\Delta^{\varphi}_{t}(A)$. By Theorems \ref{thtr} and
\ref{mainanalytic1}  there exist $C_1\ge C_0 >0$ depending on
$M_0,M_1$ and $\varphi''(0+)$ such that (\ref{EstS1exp1An}) holds
for $\alpha=0$ and $\alpha=1$ with $C_0$ and $C_1$ respectively.
By applying  Corollary \ref{coralphaAn} with
\[
a=\frac{C_0}{t},\qquad b= C_1,\qquad t>0,
\]
and with $B$ as above,  we obtain
\[
\|A^{-\alpha} \Delta^{\varphi}_{t}(A)\|\le
 2(1+M_0) C_0^{1-\alpha} C_1^\alpha t^{\alpha-1}, \qquad \alpha \in [0,1].
\]
Then \eqref{EstS1exp1An} follows with
$$C= 2(1+M_0)\max_{\alpha \in [0,1]}C_0^{1-\alpha} C_1^\alpha=2(1+M_0)C_1.$$
\end{proof}

The following scaled version of Theorem \ref{IntAn} yields
(optimal) convergence rates for approximations of bounded analytic
semigroups. It complements and extends Theorem \ref{ThC} in the
case when a $C_0$-semigroup $(e^{-tA})_{t \ge 0}$ is bounded
analytic.
\begin{thm}\label{IntAna}
Let $\varphi \in \Phi,$ and let $\Delta^{\varphi}_{t,n}$ and
$E^{\varphi}_{t,n}$ be defined by \eqref{deltadef} and
\eqref{edef}, respectively. Let $-A$ be the generator of a bounded
analytic $C_0$-semigroup $(e^{-tA})_{t \ge 0}$ on $X$. Then there
exists $C>0$ such that for every $\alpha \in [0,1]$,
\begin{eqnarray}
\|\Delta^{\varphi}_{t,n} (A)x \| &\le& \frac{C}{n t^{1-\alpha}} \|A^{\alpha} x\|, \label{delanap}\\
\| E^{\varphi}_{t,n} (A)x \| &\le& \frac{C t^{\alpha}}{n} \|
A^{\alpha} x\|,  \label{eanap}
\end{eqnarray}
for all $x \in \dom(A^\alpha),$ $n \in \mathbb N$ and $t >0.$
\end{thm}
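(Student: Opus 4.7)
The plan is to deduce Theorem~\ref{IntAna} from Theorem~\ref{IntAn} by exploiting the scaling identities \eqref{relation}, together with the invariance of the bounded analyticity constants under time-rescaling. We first treat the case when $A$ has dense range, where $A^{-\alpha}$ is well-defined in the extended HP calculus, and then remove this assumption by the regularization trick used in the proof of Theorem~\ref{ThC}.

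For the first estimate, I would use the identity $\Delta^{\varphi}_{t,n}(z)=\Delta^{\varphi}_{nt}(z/n)$ from \eqref{relation}. The operator $-A/n$ generates a bounded analytic $C_0$-semigroup on $X$ with \emph{the same} constants $M_0$ and $M_1$ as $(e^{-tA})_{t\ge 0}$, since $\|e^{-s(A/n)}\|=\|e^{-(s/n)A}\|$ and $\|s(A/n)e^{-s(A/n)}\|=\|(s/n)A\,e^{-(s/n)A}\|$. Theorem~\ref{IntAn} applied to $A/n$ in place of $A$ and $nt$ in place of $t$ therefore yields
\[
\|(A/n)^{-\alpha}\Delta^{\varphi}_{nt}(A/n)\|\le \frac{C}{(nt)^{1-\alpha}},
\]
with $C$ depending only on $M_0, M_1, \varphi''(0+)$. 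Since $(A/n)^{-\alpha}=n^{\alpha}A^{-\alpha}$ in the extended HP calculus, this becomes
\[
\|A^{-\alpha}\Delta^{\varphi}_{t,n}(A)\|\le \frac{C}{n^{\alpha}(nt)^{1-\alpha}}=\frac{C}{n\,t^{1-\alpha}}.
\]
The product rule \eqref{hpfc.e.prod} then gives $\Delta^{\varphi}_{t,n}(A)x=A^{-\alpha}\Delta^{\varphi}_{t,n}(A)\,A^{\alpha}x$ for every $x\in\dom(A^{\alpha})$, and \eqref{delanap} follows.

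For the second estimate, analogously, $E^{\varphi}_{t,n}(z)=\Delta^{\varphi}_{n}(tz/n)$ from \eqref{relation}. The operator $-tA/n$ again generates a bounded analytic semigroup on $X$ with the same constants $M_0, M_1$. Applying Theorem~\ref{IntAn} to $tA/n$ in place of $A$ and $n$ in place of $t$ gives
\[
\|(tA/n)^{-\alpha}\Delta^{\varphi}_{n}(tA/n)\|\le \frac{C}{n^{1-\alpha}},
\]
and using $(tA/n)^{-\alpha}=(n/t)^{\alpha}A^{-\alpha}$ one obtains
\[
\|A^{-\alpha}E^{\varphi}_{t,n}(A)\|\le \frac{C\,t^{\alpha}}{n^{\alpha}\cdot n^{1-\alpha}}=\frac{C\,t^{\alpha}}{n},
\]
which upon multiplication by $A^{\alpha}x$ gives \eqref{eanap}.

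Finally, to dispense with the dense range hypothesis, I would mimic the proof of Theorem~\ref{ThC}: for each $\delta>0$ the operator $-(A+\delta)$ still generates a bounded analytic $C_0$-semigroup, $A+\delta$ has dense range (indeed is invertible), and $\dom((A+\delta)^{\alpha})=\dom(A^{\alpha})$. The preceding step applies to $A+\delta$ with constants uniform in $\delta$, and letting $\delta\to 0+$ one passes to the limit using $\lim_{\delta\to 0+}(A+\delta)^{\alpha}x=A^{\alpha}x$ and the $\Wip(\mathbb{C}_+)$-continuity $\Delta^{\varphi}_{t,n}(\cdot+\delta)\to \Delta^{\varphi}_{t,n}$, $E^{\varphi}_{t,n}(\cdot+\delta)\to E^{\varphi}_{t,n}$ as in Theorem~\ref{ThC}. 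The only real subtlety is bookkeeping of the rescaling exponents and verifying that the constant in Theorem~\ref{IntAn} is indeed a function of only $M_0, M_1, \varphi''(0+)$ and so remains unchanged after replacing $A$ by $A/n$ or $tA/n$; this is the main (and essentially the only) thing one has to check carefully, but it is clear from inspecting the proofs of Theorems~\ref{thtr} and \ref{mainanalytic1} which underlie Theorem~\ref{IntAn}.
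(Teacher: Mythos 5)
Your proof is correct and takes essentially the same route as the paper's: use the scaling identities \eqref{relation} to reduce to Theorem~\ref{IntAn} applied to the rescaled generators $(A+\delta)/n$ and $t(A+\delta)/n$, track the powers of $n$ and $t$, and pass $\delta\to 0+$ by the same regularization argument as in Theorem~\ref{ThC}. Your explicit check that $M_0$, $M_1$ are invariant under $A\mapsto cA$ (so the constant from Theorem~\ref{IntAn} is genuinely uniform in $n$ and $t$) is precisely the point the paper leaves implicit, and it is good that you flagged it.
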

\begin{proof}
Let $\delta >0, \alpha \in [0,1],$ $n \in \N,$ and $x \in \dom
(A^\alpha)$ be fixed. Then by \eqref{relation} and Theorem
\ref{IntAn}, there exists $C>0$ such that
\begin{equation*}
\|\Delta^{\varphi}_{t,n} (A+\delta)x\| = \|\Delta^{\varphi}_{nt}
((A+\delta)/n)x\| \le \frac{C}{n t^{1-\alpha}}
\|(A+\delta)^{\alpha} x\|.
\end{equation*}
Arguing as in the proof of Theorem \ref{ThC} and letting $\delta
\to 0+$ we get
\begin{equation*}
\|\Delta^{\varphi}_{t,n} (A)x\|  \le \frac{C}{n t^{1-\alpha}} \|A^{\alpha} x\|.
\end{equation*}

Similarly, since
\begin{eqnarray*}
\|E^{\varphi}_{t,n} (A+\delta)x\| &=& \|\Delta^{\varphi}_{n} (t(A+\delta)/n)x\| \\
&\le& \frac{C}{n^{1-\alpha}} \left(\frac{t}{n}\right)^\alpha\|(A+\delta)^{\alpha} x\|\\
&=& \frac{C t^{\alpha}}{n} \|(A+\delta)^{\alpha} x\|
\end{eqnarray*}
by  \eqref{relation} and Theorem \ref{IntAn},
we obtain as above
\begin{eqnarray*}
\|E^{\varphi}_{t,n} (A)x\| \le
\frac{C t^{\alpha}}{n} \| A^{\alpha} x\|.
\end{eqnarray*}
\end{proof}

We now specify Theorem \ref{IntAna} for the three classical
semigroup approximations considered in Section \ref{applgeneral}.
The next result is an improvement of Corollary \ref{Ex22} for
bounded analytic semigroups.
\begin{cor}\label{Ex2}
Let $-A$ be the generator of a  bounded analytic $C_0$-semigroup
$(e^{-tA})_{t \ge 0}$ on $X$. Then there exists $C>0$ such that for every
$\alpha \in [0,1],$
\begin{itemize}
\item [a)] \, {\rm [Yosida approximation]}
\[
\|e^{-tA}x-e^{-n t A(n+A)^{-1}}x\| \le C
(nt^{1-\alpha})^{-1} \|A^\alpha x\|;
\]
\item [b)]\, {\rm [Dunford-Segal approximation]}
\[
\|e^{-tA}x-e^{-nt(1-e^{-A/n})}x\| \le
C(nt^{1-\alpha})^{-1} \|A^\alpha x\|;
\]

\item [c)]\, {\rm [Euler approximation]}
\[
\|e^{-tA}x - (1+t/nA)^{-n}x\|\le
 C(nt^{-\alpha})^{-1} \|A^\alpha x\|. 
\]
\end{itemize}
for all
$t
>0, n \in \mathbb N$ and $x \in \dom (A^{\alpha}).$
\end{cor}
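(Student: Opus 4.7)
The plan is to observe that Corollary \ref{Ex2} is essentially a direct transcription of Theorem \ref{IntAna} for the three specific Bernstein functions treated in Example \ref{Exampp}, namely
\[
\varphi_1(z)=\frac{z}{z+1},\qquad \varphi_2(z)=1-e^{-z},\qquad \varphi_3(z)=\log(1+z),
\]
all of which belong to the class $\Phi$. The key point is that each of the three approximants in the statement coincides with an object of the form $e^{-nt\varphi(A/n)}$ (for a) and b)) or $e^{-n\varphi(tA/n)}$ (for c)), and so is controlled by the operators $\Delta^{\varphi}_{t,n}(A)$ or $E^{\varphi}_{t,n}(A)$ to which Theorem \ref{IntAna} directly applies.

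The steps I would carry out are the following. First, I would check the operator identities matching the Bernstein functions to the three formulas. For Yosida, $\varphi_1(A/n)=A/n\cdot(A/n+1)^{-1}=A(n+A)^{-1}$, which is well-defined in the HP calculus since $-A$ generates a bounded semigroup and thus $-n\in\rho(A)$; hence $nt\,\varphi_1(A/n)=ntA(n+A)^{-1}$ and $\Delta^{\varphi_1}_{t,n}(A)=e^{-ntA(n+A)^{-1}}-e^{-tA}$. For Dunford--Segal, by Theorem \ref{subord} applied (rescaled) to $\varphi_2$, the operator $\varphi_2(A/n)=1-e^{-A/n}$, giving $\Delta^{\varphi_2}_{t,n}(A)=e^{-nt(1-e^{-A/n})}-e^{-tA}$. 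For Euler, $\varphi_3(tA/n)=\log(1+tA/n)$ and Theorem \ref{subord} together with the Laplace transform identity $e^{-n\log(1+z)}=(1+z)^{-n}$ yield $e^{-n\varphi_3(tA/n)}=(1+tA/n)^{-n}$, so that $E^{\varphi_3}_{t,n}(A)=(1+tA/n)^{-n}-e^{-tA}$.

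Next, I would apply Theorem \ref{IntAna} separately to each $\varphi_i$. For a) and b) I use \eqref{delanap} to obtain
\[
\|\Delta^{\varphi_i}_{t,n}(A)x\|\le \frac{C_i}{nt^{1-\alpha}}\|A^{\alpha}x\|,\qquad i=1,2,
\]
for all $x\in\dom(A^{\alpha})$, $t>0$, $n\in\mathbb N$ and $\alpha\in[0,1]$. For c) I invoke \eqref{eanap} to get
\[
\|E^{\varphi_3}_{t,n}(A)x\|\le \frac{C_3 t^{\alpha}}{n}\|A^{\alpha}x\|=\frac{C_3}{n t^{-\alpha}}\|A^{\alpha}x\|.
\]
Setting $C:=\max(C_1,C_2,C_3)$ gives a single constant that works for all three assertions.

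There is essentially no obstacle here beyond bookkeeping, since the heavy lifting was already carried out in Theorems \ref{thtr}, \ref{mainanalytic1}, \ref{FPAn}, and \ref{IntAna}. The only minor points worth verifying are the identification of $(1+tA/n)^{-n}$ as $e^{-n\varphi_3(tA/n)}$ inside the extended HP calculus (which follows from compatibility of the functional calculi, since both sides equal the HP-calculus image of $(1+tz/n)^{-n}\in \Wip(\mathbb C_+)$) and the observation that each $\varphi_i\in\Phi$, already recorded in Example \ref{Exampp}. Once these identifications are in place, the corollary follows immediately.
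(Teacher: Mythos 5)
Your proposal is correct and follows exactly the route the paper takes: Corollary \ref{Ex2} is presented there as a direct specialization of Theorem \ref{IntAna} to the three Bernstein functions $\varphi_1,\varphi_2,\varphi_3\in\Phi$ from Example \ref{Exampp}, with the same operator identifications you spell out. Taking the maximum of the three constants is the only bookkeeping step, and you have handled it.
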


\section{Optimality of rate estimates}\label{optimalsection}

In this section, we show that Theorems \ref{ThC}  and \ref{IntAn} as well as their
respective Corollaries \ref{Ex22} and  \ref{Ex2} are sharp under
natural spectral conditions on the generator.

We start with two auxiliary technical lemmas.

\begin{lemma}\label{exp}
If  $\varphi \in \Phi, \varphi(z)\not \equiv z,$ then there exist
$c>0$ and $T >0$ such that
\begin{equation}\label{deltaexp}
|\Delta^{\varphi}_{t}(\pm i/\sqrt{t})|\ge c,
\end{equation}
for all $t \ge T$.
\end{lemma}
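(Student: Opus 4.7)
The plan is to expand $\Delta^{\varphi}_t$ along the imaginary axis at $z_t := \pm i/\sqrt t$ and compute a non-zero limit as $t \to \infty$. Writing
\[
\Delta^{\varphi}_t(z_t) = e^{\mp i\sqrt t}\bigl(e^{-t(\varphi(z_t) - z_t)} - 1\bigr),
\]
the first factor has modulus one, so the task reduces to showing that $t(\varphi(z_t) - z_t)$ has a non-zero limit.

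\smallskip\noindent\textbf{Step 1 ($\varphi''(0+)\neq 0$).} From the Levy--Khintchine representation $\varphi(z) = bz + \int_{0+}^\infty (1 - e^{-zs})\mu(ds)$ (with $a=0$ by $\varphi(0)=0$) and monotone convergence one has $\varphi''(0+) = -\int_{0+}^\infty s^2 \mu(ds) \le 0$. If this quantity vanished, positivity of $\mu$ would force $\mu \equiv 0$, whence $\varphi'(0+)=b=1$ would yield $\varphi(z) = z$, contradicting the hypothesis. Hence $\varphi''(0+)<0$.

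\smallskip\noindent\textbf{Step 2 (asymptotics).} Using $b - 1 = -\int_{0+}^\infty s\,\mu(ds)$ (which follows from $\varphi'(0+)=1$), I would rewrite
\[
\varphi(z) - z = \int_{0+}^\infty (1 - e^{-zs} - zs)\,\mu(ds)
\]
on $\overline{\mathbb C_+}$, substitute $z = \pm i/\sqrt t$, and obtain
\[
t\bigl(\varphi(\pm i/\sqrt t) \mp i/\sqrt t\bigr) = \int_{0+}^\infty t\, h(\pm s/\sqrt t)\,\mu(ds),\qquad h(u) := 1 - e^{-iu} - iu.
\]
Since $h(0)=0$ and $|h'(u)| = |e^{-iu}-1| \le |u|$, integration gives the uniform bound $|h(u)| \le u^2/2$ on $\mathbb R$, so the integrand above is dominated by $s^2/2$, which is $\mu$-integrable by $|\varphi''(0+)|<\infty$. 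As $h(u) = u^2/2 + O(|u|^3)$ near $0$, we have $t\,h(\pm s/\sqrt t) \to s^2/2$ pointwise, and dominated convergence yields
\[
\lim_{t\to\infty}\, t\bigl(\varphi(\pm i/\sqrt t) \mp i/\sqrt t\bigr) = \int_{0+}^\infty \frac{s^2}{2}\,\mu(ds) = -\frac{\varphi''(0+)}{2} > 0.
\]

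\smallskip\noindent\textbf{Step 3 (conclusion).} Continuity of the exponential together with $|e^{\mp i\sqrt t}| = 1$ then gives
\[
\bigl|\Delta^{\varphi}_t(\pm i/\sqrt t)\bigr| \longrightarrow \bigl|e^{\varphi''(0+)/2} - 1\bigr|,
\]
and the limit is strictly positive by Step 1. Hence \eqref{deltaexp} holds with $c := \tfrac{1}{2}\bigl|e^{\varphi''(0+)/2}-1\bigr|$ for all $t \ge T$ with $T$ sufficiently large.

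The only mildly delicate point is the uniform estimate $|h(u)| \le u^2/2$, which lets dominated convergence be applied without splitting the $s$-integral into regions $s \le \sqrt t$ and $s > \sqrt t$; the rest is direct manipulation of the Levy--Khintchine data of $\varphi$.
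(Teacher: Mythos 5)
Your proof is correct and takes a genuinely different route from the paper's. You factor out the unimodular term $e^{\mp i\sqrt t}$ and then compute the exact limit of $t\bigl(\varphi(\pm i/\sqrt t)\mp i/\sqrt t\bigr)$ via dominated convergence on the L\'evy--Khintchine integral of $1-e^{-zs}-zs$, identifying the limiting value $\bigl|e^{\varphi''(0+)/2}-1\bigr|$ of $|\Delta^\varphi_t(\pm i/\sqrt t)|$. The paper instead never computes a limit: it applies the reverse triangle inequality to get $|\Delta^\varphi_t(i\tau)|\ge 1-e^{-t\,\mathrm{Re}\,\varphi(i\tau)}$, and then bounds $\mathrm{Re}\,\varphi(i\tau)=2\int\sin^2(\tau s/2)\,\mu(ds)$ from below on a truncated range using Jordan's inequality, obtaining a constant $c=1-e^{-2d/\pi^2}$ with $d=\int_{0+}^r s^2\,\mu(ds)$. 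Your argument buys the sharp asymptotic constant and a transparent reason why $\varphi''(0+)\neq 0$ is the governing quantity; the paper's argument buys a slightly more elementary, fully inequality-based derivation and an explicit threshold $T=r^2/\pi^2$ tied to the support of $\mu$, with no need for the dominated-convergence machinery or the uniform bound $|h(u)|\le u^2/2$. Both arguments reduce $\varphi\not\equiv z$ to $\mu\not\equiv 0$ in the same way. All your intermediate claims check out: $b-1=-\int_{0+}^\infty s\,\mu(ds)$ does follow from $\varphi'(0+)=1$, $|h'(u)|=|e^{-iu}-1|=2|\sin(u/2)|\le|u|$ gives the global bound $|h(u)|\le u^2/2$, and the pointwise limit $t\,h(\pm s/\sqrt t)\to s^2/2$ together with $\mu$-integrability of $s^2$ justifies the passage to the limit.
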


\begin{proof}
By assumption, $\varphi$ is of the form
\[
\varphi(z)=bz+\int_{0+}^\infty (1-e^{-zs})\,\mu(ds),
\]
where a positive Radon measure $\mu$ satisfies $\int_{0+}^\infty
\frac{s\,\mu(ds)}{1+s}<\infty.$ Since $\mu \not\equiv 0$ there
exists $r>0$ such that $\mu ((0,r)) >0$ and then $d:=\int_{0+}^{r}
s^2\,\mu(ds) >0.$ Note that for every $\tau \in
(0,\frac{\pi}{T}],$
\[
{\rm Re}\,(\varphi(i\tau)) = 2\int_{0+}^\infty \sin^2(\tau
s/2)\,\mu(ds)\ge \frac{2 \tau^2 }{\pi^2}\int_{0+}^{\pi/\tau}
s^2\,\mu(ds)\ge \frac{2 \tau^2 d}{\pi^2}.
\]
Hence for any $\tau\in (0,\frac{\pi}{T}],$
\[
|\Delta^{\varphi}_{t}(-i\tau)|=|\Delta^{\varphi}_{t}(i\tau)|\ge
1-e^{-t{\rm Re}\,\varphi(i\tau)}\ge 1-e^{-2d t\tau^2/\pi^2}.
\]
Setting $\tau =1/\sqrt{t}$, we thus obtain
\[
|\Delta^{\varphi}_t(-i/\sqrt{t})|=|\Delta^{\varphi}_t(i/\sqrt{t})|\ge
1-e^{-2d/\pi^2},\quad t\ge \frac{r^2}{\pi^2}.
\]
It remains to put $c:=1-e^{-2d/\pi^2}>0$ and
$T=\frac{r^2}{\pi^2}.$
\end{proof}

\begin{lemma}\label{lemmaL}
 If  $\varphi \in \Phi, \varphi(z)\not \equiv z,$ then there exist $\delta>0$ and $c>0$
 such that
 \begin{equation}\label{AH}
e^{-\varphi(\tau)/\tau}-e^{-1}\ge c \tau, \qquad\tau \in (0,\delta].
 \end{equation}
\end{lemma}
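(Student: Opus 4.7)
The plan is to establish the inequality by a Taylor expansion of $\varphi$ at $0$, using that the hypothesis $\varphi \not\equiv z$ forces $\varphi''(0+)$ to be strictly negative.

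First I would argue that $\varphi''(0+) < 0$. Since $\varphi \in \Phi$, the Lévy-Khintchine representation has the form $\varphi(z) = bz + \int_{0+}^{\infty}(1 - e^{-zs})\,\mu(ds)$ with $\varphi(0) = 0$, so $a = 0$. Using $\varphi'(0+) = b + \int_{0+}^{\infty} s\,\mu(ds) = 1$ and differentiating twice under the integral sign (justified by $\int_{0+}^\infty s^2\mu(ds)<\infty$, by Remark \ref{eqCond}) yields
\[
 \varphi''(0+) = -\int_{0+}^{\infty} s^2\,\mu(ds).
\]
If this were zero, then $\mu \equiv 0$, forcing $b = 1$ and $\varphi(z) \equiv z$, contradicting the hypothesis. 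Hence $\kappa := |\varphi''(0+)|/2 > 0$.

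Next I would perform a Taylor expansion at $\tau = 0$. Since $\varphi(0) = 0$, $\varphi'(0+) = 1$, and $\varphi''(0+)$ is finite, the standard expansion gives, as $\tau \to 0+$,
\[
\varphi(\tau) = \tau - \kappa \tau^2 + o(\tau^2), \qquad \frac{\varphi(\tau)}{\tau} = 1 - \kappa\tau + o(\tau).
\]
Consequently
\[
 e^{-\varphi(\tau)/\tau} - e^{-1} = e^{-1}\bigl(e^{\kappa\tau + o(\tau)} - 1\bigr) = e^{-1}\kappa\tau + o(\tau).
\]
Therefore, choosing $\delta > 0$ small enough so that the $o(\tau)$ term is bounded in modulus by $e^{-1}\kappa\tau/2$ on $(0,\delta]$, we obtain
\[
 e^{-\varphi(\tau)/\tau} - e^{-1} \ge \frac{e^{-1}\kappa}{2}\,\tau, \qquad \tau \in (0,\delta],
\]
and the lemma follows with $c = e^{-1}\kappa/2$.

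There is no real obstacle: the only subtle point is ruling out $\varphi''(0+) = 0$, which is a consequence of the uniqueness of the Lévy-Khintchine triple (so that $\int s^2 \mu(ds) = 0 \Rightarrow \mu \equiv 0$) combined with $\varphi'(0+) = 1$. Everything else is a routine Taylor expansion.
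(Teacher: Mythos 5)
Your proof is correct and takes essentially the same approach as the paper: both compute $\lim_{\tau\to 0+}\bigl(e^{-\varphi(\tau)/\tau}-e^{-1}\bigr)/\tau = |\varphi''(0+)|/(2e)$ and deduce the bound from the positivity of this limit. The only cosmetic difference is that the paper evaluates the limit via L'H\^opital's rule rather than a Taylor expansion, and you make explicit the step (left implicit in the paper) that $\varphi \not\equiv z$ forces $\varphi''(0+)\neq 0$ via the L\'evy--Khintchine representation.
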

\begin{proof} By applying Lopital's rule twice, we obtain
\begin{eqnarray*}
\lim_{\tau\to
0+}\,\frac{e^{-\varphi(\tau)/\tau}-e^{-1}}{\tau}&=&\lim_{\tau\to
0+}\frac{d}{d\tau}e^{-\varphi(\tau)/\tau}\\
 &=&e^{-1}\lim_{\tau\to
0+}\,\frac{\varphi(\tau)-\tau \varphi'(\tau)}{\tau^2}
=\frac{1}{2e}|\varphi''(0+)|>0.
\end{eqnarray*}
This clearly implies the claim.
\end{proof}

We also need a simple result on  functional calculi which is
convenient to separate as the next lemma.
\begin{lemma}Let  $-A$ be the generator of a bounded $C_0$-semigroup
 on $X$ such that
$\overline{{\rm ran}}\, (A)=X.$ If $\alpha \ge 0$ and
$f(z)=z^{-\alpha}\Delta_t(z), z \in \mathbb C_+,$ then $f$ belongs
to the extended Hille-Philips functional calculus for $A$.
\end{lemma}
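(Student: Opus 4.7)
The plan is to exhibit an explicit regularizer for $f$ in the sense of the extended Hille--Phillips calculus described in Section \ref{fcc}. If $\alpha = 0$ there is nothing to prove, since $\Delta_t = e^{-t\varphi(\cdot)} - e^{-t\cdot} \in \Wip(\C_+)$ (difference of bounded completely monotone functions, each having $\Wip$-norm $1$) already lies in the ordinary HP calculus. So I may assume $\alpha > 0$.

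For $\alpha > 0$, the natural candidate for a regularizer is
\[
e(z) := u_{\alpha,1}(z) = \left(\frac{z}{z+1}\right)^{\alpha},
\]
which belongs to $\Wip(\C_+)$ by Lemma \ref{Propbeta}. Multiplying,
\[
(ef)(z) = \left(\frac{z}{z+1}\right)^{\alpha} z^{-\alpha}\Delta_t(z) = (z+1)^{-\alpha}\Delta_t(z) = v_{\alpha,1}(z)\,\Delta_t(z),
\]
and the right-hand side is a product of two elements of the Banach algebra $\Wip(\C_+)$ (use Lemma \ref{Propbeta} for $v_{\alpha,1}$ and the remark above for $\Delta_t$), so $ef \in \Wip(\C_+)$.

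It remains to check that $e(A)$ is injective. By the mean ergodic theorem, the hypothesis $\cls{\ran}(A) = X$ forces $A$ to be injective; then, by the discussion preceding \eqref{regular}, $A^{\alpha}$ is injective as an operator in the extended HP calculus. From the factorisation $u_{\alpha,1}(z) = z^{\alpha}\,v_{\alpha,1}(z)$ together with \eqref{regular} and the product rule \eqref{hpfc.e.prod}, one has
\[
e(A) = u_{\alpha,1}(A) = A^{\alpha}(1+A)^{-\alpha},
\]
where $(1+A)^{-\alpha}$ is a bounded injective operator (its inverse $(1+A)^{\alpha}$ is defined on a dense domain). Hence $e(A)x = 0$ forces first $(1+A)^{-\alpha}x \in \ker(A^{\alpha}) = \{0\}$ and then $x = 0$, so $e(A)$ is injective. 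Consequently $f$ is regularizable by $e$, which places $f$ in the extended HP functional calculus for $A$, completing the argument.

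The only step that is not essentially a bookkeeping exercise is the injectivity verification, but this reduces cleanly to the two facts that injectivity of $A$ is inherited by $A^{\alpha}$ (already recorded in the paper) and that $(1+A)^{-\alpha}$ is injective as a bounded inverse of a fractional resolvent; no further analytic work is needed.
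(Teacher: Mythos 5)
Your proposal is correct and follows the paper's own argument exactly: the same regularizer $e = u_{\alpha,1}$, the same observation that $ef = v_{\alpha,1}\Delta_t \in \Wip(\C_+)$ via Lemma \ref{Propbeta}, and injectivity of $e(A)$ reduced to injectivity of $A^{\alpha}$ (which the paper states in one compressed sentence). The only difference is cosmetic: you spell out the factorisation $e(A) = A^{\alpha}(1+A)^{-\alpha}$ and the chase through the kernels, and you dispose of the trivial case $\alpha = 0$ separately, both of which the paper leaves implicit.
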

\begin{proof}
If $e(z):=\frac{z^{\alpha}}{(z+1)^{\alpha}},$ then $e \in \Wip
(\mathbb C_+)$ by Lemma \ref{betaW}. Moreover $ef \in \Wip
(\mathbb C_+),$ since $(z+1)^{-\alpha} \in \Wip (\mathbb C_+)$ by
Lemma \ref{betaW} again. Thus, $f$  belongs to the extended
Hille-Philips functional calculus if $e(A)$ is injective. To prove
the injectivity  of  $e(A)$ it suffices to note that, since $A$ is
injective, $A^{\alpha}$ is injective as well by \cite[Proposition
3.1.1, d)]{Ha06}.
\end{proof}

Now we are ready to prove the main result of this section.
\begin{thm}\label{sharpHY1}
Let  $-A$ be the generator of a bounded $C_0$-semigroup
$(e^{-tA})_{t\ge 0}$ on a Banach space $X$ such that
$\overline{{\rm ran}}\, (A)=X$, and let  $\varphi \in \Phi,
\varphi(z)\not \equiv z.$
\begin{itemize}
\item [(i)]
If $ \{|s|:\,s\in \mathbb R,\,is\in \sigma(A)\}=\mathbb R_{+}, $
then  there exist $c>0$ and $T>0$ independent of $A$ such that for
every $\alpha\in [0,2]$ and all $t \ge T,$
\begin{equation}\label{EstS1exp}
\|A^{-\alpha}\Delta^\varphi_t(A)\|\ge
c {t}^{\alpha/2}.
\end{equation}

\item [(ii)] If
$\mathbb R_{+}\subset \sigma(A), $ then  there exist $c>0$ and
$T>0$ independent of $A$ such that for every
$\alpha\in [0,2]$ and
all $t\ge T,$
\begin{equation}\label{EstS1exp1}
\|A^{-\alpha}\Delta^\varphi_t(A)\|\ge c t^{\alpha-1}.
\end{equation}
\end{itemize}
\end{thm}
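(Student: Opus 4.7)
My strategy is to apply the spectral inclusion of Theorem~\ref{mapping} to a bounded regularization of $z^{-\alpha}\Delta^{\varphi}_t(z)$, then evaluate at spectral points specifically chosen so that the scalar lower bounds in Lemma~\ref{exp} and Lemma~\ref{lemmaL} apply. These two lemmas are visibly tailor-made to produce the $t^{\alpha/2}$ rate at the imaginary-axis points $\pm i/\sqrt{t}$ and the $t^{\alpha-1}$ rate at the real-axis point $1/t$ respectively, so the main task is packaging them into an estimate for $\|A^{-\alpha}\Delta^{\varphi}_t(A)\|$.

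Since $A$ has dense range and generates a bounded semigroup, the mean ergodic theorem gives that $A$ is injective, so $A^{-\alpha}$ is defined inside the extended HP calculus. In the notation of~\eqref{betaW} I would use the identity $v_{\alpha,\tau}(z) = z^{-\alpha}u_{\alpha,\tau}(z)$. Since $v_{\alpha,\tau}$ and $\Delta^{\varphi}_t$ both lie in $\Wip(\mathbb{C}_+)$, the product rule~\eqref{hpfc.e.prod} yields
$$v_{\alpha,\tau}(A)\Delta^{\varphi}_t(A) \,=\, A^{-\alpha}\Delta^{\varphi}_t(A)\,u_{\alpha,\tau}(A),$$
with $\|u_{\alpha,\tau}(A)\|\le Mc_\alpha$ by~\eqref{hille-pillips}, Lemma~\ref{Propbeta} and~\eqref{denote}. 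Combined with Theorem~\ref{mapping} applied to $v_{\alpha,\tau}\Delta^{\varphi}_t \in \Wip(\mathbb{C}_+)$, this gives the master inequality
$$\|A^{-\alpha}\Delta^{\varphi}_t(A)\|\;\ge\;\frac{1}{Mc_\alpha}\sup_{\lambda\in\sigma(A)}\bigl|v_{\alpha,\tau}(\lambda)\bigr|\,\bigl|\Delta^{\varphi}_t(\lambda)\bigr|.$$

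For part (i), for $t\ge T$ with $T$ as in Lemma~\ref{exp}, the spectral hypothesis furnishes $\lambda_t\in\sigma(A)$ of the form $\pm i/\sqrt{t}$; I would set $\tau = 1/\sqrt{t}$, so that $|v_{\alpha,\tau}(\lambda_t)| = (t/2)^{\alpha/2}$ while $|\Delta^{\varphi}_t(\lambda_t)|\ge c$ by Lemma~\ref{exp}, which delivers~\eqref{EstS1exp}. For part (ii), for $t$ sufficiently large, the inclusion $\mathbb{R}_+\subset\sigma(A)$ lets me take $\lambda_t=1/t$ and $\tau=1/t$; then $v_{\alpha,\tau}(1/t)=(t/2)^{\alpha}$ and, writing $\sigma=1/t\in(0,\delta]$, the identity $\Delta^{\varphi}_t(1/t) = e^{-\varphi(\sigma)/\sigma}-e^{-1}\ge c\sigma = c/t$ from Lemma~\ref{lemmaL} delivers~\eqref{EstS1exp1}.

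The one delicate step is the operator factorization $v_{\alpha,\tau}(A)\Delta^{\varphi}_t(A) = A^{-\alpha}\Delta^{\varphi}_t(A)\,u_{\alpha,\tau}(A)$ inside the extended (unbounded) HP calculus; this is exactly what the unlabelled lemma immediately preceding the theorem (showing that $z^{-\alpha}\Delta^{\varphi}_t$ belongs to the extended calculus) is designed to secure, and~\eqref{hpfc.e.prod} then justifies the algebra. If $\|A^{-\alpha}\Delta^{\varphi}_t(A)\|=+\infty$ for some $\alpha$ the claim is trivial; otherwise the master inequality above carries the proof.
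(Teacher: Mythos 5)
Your proof is correct and follows essentially the same skeleton as the paper's argument: spectral inclusion (Theorem~\ref{mapping}) combined with Lemma~\ref{exp} evaluated at $\pm i/\sqrt{t}$ for part (i) and Lemma~\ref{lemmaL} evaluated at $1/t$ for part (ii). The one genuine technical divergence is how you arrive at the operator lower bound: the paper invokes Corollary~\ref{coralpha} to get $z^{-\alpha}\Delta^{\varphi}_t\in\Wip(\C_+)$ and applies Theorem~\ref{mapping} to it directly, yielding $\|A^{-\alpha}\Delta^{\varphi}_t(A)\|\ge \sup_{\lambda\in\sigma(A)}|\lambda^{-\alpha}\Delta^{\varphi}_t(\lambda)|$ with no loss of constant; you instead regularize by $u_{\alpha,\tau}$, apply the spectral inclusion to $v_{\alpha,\tau}\Delta^{\varphi}_t$, and peel off $u_{\alpha,\tau}(A)$ at the operator level, paying a factor $1/(Mc_\alpha)$. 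That trick is precisely the one the paper itself uses in Remark~\ref{rangealpha} (to show $\alpha>2$ fails), so you have in effect borrowed the Remark's mechanism to prove the Theorem. Your route needs only the elementary facts about $u_{\alpha,\tau}, v_{\alpha,\tau}$ from Lemma~\ref{Propbeta} and avoids the interpolation machinery behind Corollary~\ref{coralpha}, at the cost of a slightly longer argument; and since $c_\alpha\le 2^{\alpha+1}\le 8$ and $2^{-\alpha}\ge 1/4$ for $\alpha\in[0,2]$, the extra factors are bounded below uniformly, so the uniformity in $\alpha$ claimed in the theorem is preserved.
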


\begin{proof}
Let $\alpha \in (0,2]$ be fixed. Recall that by Corollary
\ref{coralpha}, $z^{-\alpha} \Delta^{\varphi}_{t} \in \Wip(\mathbb
C_+)$ for all $t>0.$ By the product rule \eqref{hpfc.e.prod},
\begin{equation*}
A^{-\alpha} (e^{-t\varphi(A)} - e^{-tA})=(z^{-\alpha}\Delta_t)(A).
\end{equation*}
Hence, by Theorem \ref{mapping},
\begin{equation}\label{InclH1}
\|A^{-\alpha} (e^{-t\varphi(A)} - e^{-tA})\| \ge \sup_{\lambda \in
\sigma(A)}\,|\lambda^{-\alpha} (e^{-t\varphi(\lambda)} -
e^{-t\lambda})|,
\end{equation}
for every $t>0.$
Let $c$ and $T$  be given by Lemma \ref{exp}, and let $t$ be such
that $t \ge T.$ By our assumption either $i/\sqrt{t}\in\sigma(A)$
or $-i/\sqrt{t}\in \sigma(A)$. If $i/\sqrt{t} \in\sigma(A)$ then
from (\ref{InclH1}) it follows that
\begin{equation}\label{ShAexp}
\|A^{-\alpha}(e^{-t\varphi(A)}-e^{-tA}) \| \ge t^{\alpha/2}
|\Delta^{\varphi}_{t}(i/\sqrt{t})|\ge ct^{\alpha/2}.
\end{equation}
Then, by Lemma \ref{exp}, \eqref{ShAexp} implies \eqref{EstS1exp}.

The case  $\lambda=-i/\sqrt{t} \in\sigma(A)$, $t \ge T$, is
considered similarly.

To prove (ii) we argue as in the proof of (i) but use Lemma \ref{exp} instead
of Lemma \ref{lemmaL}. Let $c,\delta >0$ be given by Lemma
\ref{lemmaL} and let $t>0$  be such that $t \ge T=1/\delta$. By
assumption $\lambda=1 /t\in\sigma(A),$ hence from (\ref{InclH1}),
 (\ref{AH}) and the product rule \eqref{hpfc.e.prod} it follows that
\[
\|A^{-\alpha} \Delta^\varphi_{t}(A)\| \ge  t^{\alpha}
|(e^{-t\varphi(1/t)} - e^{-1})|\ge c t^{\alpha-1},
\]
and (ii) is proved.

\end{proof}

The following direct corollary of Theorem \ref{sharpHY1} shows
that the convergence rates for $\Delta^\varphi_{t,n}(A)$ and
$E^\varphi_{t,n}(A)$ obtained in the previous sections, e.g. the ones
in Theorems \ref{ThC}, \ref{Ex22}, \ref{IntAna} and \ref{Ex2}, are
optimal if the spectrum of $A$ is large enough.
\begin{cor}\label{corranall}
Let  $-A$ be the generator of a bounded $C_0$-semigroup
$(e^{-tA})_{t\ge 0}$ on a Banach space $X$ such that
$\overline{{\rm ran}}\, (A)=X$, and let   $\varphi \in \Phi,
\varphi(z)\not \equiv z.$
\begin{itemize}
\item [(i)]
If $ \{|s|:\,s\in \mathbb R,\,is\in \sigma(A)\}=\mathbb R_{+},$
then  there exist $c>0$ and $T>0$ such that for every $\alpha\in
(0,2]$ and  all $t \ge T,$
\begin{eqnarray*}
\|A^{-\alpha}\Delta^\varphi_{t,n} (A)\|&\ge& c \left(\frac{t}{n}\right)^{\alpha/2},\\
\|A^{-\alpha} E^\varphi_{t,n}(A)\|&\ge& c
\left(\frac{t^2}{n}\right)^{\alpha/2}.
\end{eqnarray*}
\item [(ii)] If
$ \mathbb R_{+}\subset \sigma(A), $ then  there exist $T>0$ and
$c>0$ such that for every $\alpha\in [0,2]$ and all $t\ge T,$
\begin{eqnarray*}
\|A^{-\alpha}\Delta^\varphi_{t,n}(A)\|&\ge& c n^{-1} t^{\alpha-1},\\
\|A^{-\alpha} E^\varphi_{t,n}(A)\|&\ge& c n^{-1} t^{\alpha}.
\end{eqnarray*}
\end{itemize}
\end{cor}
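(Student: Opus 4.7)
The strategy is to reduce both estimates to Theorem \ref{sharpHY1} by applying it to suitably rescaled operators, using the scaling identities \eqref{relation}: $\Delta^\varphi_{t,n}(z) = \Delta^\varphi_{nt}(z/n)$ and $E^\varphi_{t,n}(z) = \Delta^\varphi_n(tz/n)$. The crucial preliminary observation is that the hypotheses of Theorem \ref{sharpHY1} are scale-invariant: for any $\lambda > 0$ the operator $-\lambda A$ also generates a bounded $C_0$-semigroup with $\cls{\ran}(\lambda A) = X$, and $\sigma(\lambda A) = \lambda \sigma(A)$, so both spectral conditions (i) and (ii) are preserved (since $\lambda \R_+ = \R_+$ and $\lambda i\R = i\R$). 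Hence Theorem \ref{sharpHY1} applies to $\lambda A$ with the same constants $c, T$ for every $\lambda > 0$.

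For the $\Delta$-estimates I would set $B := A/n$. The product rule \eqref{hpfc.e.prod} together with \eqref{relation} yields
\[
A^{-\alpha}\Delta^\varphi_{t,n}(A) = n^{-\alpha}\, B^{-\alpha}\Delta^\varphi_{nt}(B),
\]
and since $nt \ge T$ whenever $t \ge T$ and $n \ge 1$, Theorem \ref{sharpHY1} applied to $B$ at time $nt$ gives
\[
\|A^{-\alpha}\Delta^\varphi_{t,n}(A)\| = n^{-\alpha}\, \|B^{-\alpha}\Delta^\varphi_{nt}(B)\| \ge c\, n^{-\alpha}(nt)^{\alpha/2} = c\,(t/n)^{\alpha/2}
\]
in case (i), and analogously $\ge c\, n^{-\alpha}(nt)^{\alpha-1} = c\,n^{-1}t^{\alpha-1}$ in case (ii).

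For the $E$-estimates I would set $C := tA/n$; then $A^{-\alpha} = (t/n)^\alpha C^{-\alpha}$ and $E^\varphi_{t,n}(A) = \Delta^\varphi_n(C)$, so Theorem \ref{sharpHY1} applied to $C$ at time $n$ yields
\[
\|A^{-\alpha}E^\varphi_{t,n}(A)\| = (t/n)^\alpha \,\|C^{-\alpha}\Delta^\varphi_n(C)\| \ge c\,(t/n)^\alpha n^{\alpha/2} = c\,(t^2/n)^{\alpha/2}
\]
in case (i), and $\ge c\,(t/n)^\alpha n^{\alpha-1} = c\, n^{-1} t^\alpha$ in case (ii).

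The main technical point, and the only subtlety, is that this last application requires $n \ge T$ (whereas the corollary asks for the bound at every $n \ge 1$). For the finitely many values $1 \le n < T$ I would argue directly via the spectral inclusion \eqref{spmapping}: in case (i) one chooses $\lambda = \pm i\sqrt{n}/t \in \sigma(A)$, so that $|E^\varphi_{t,n}(\lambda)| = |\Delta^\varphi_n(\pm i/\sqrt{n})|$, which is strictly positive for each fixed $n$ because $\varphi \not\equiv z$ forces $\re\varphi(\pm i/\sqrt{n}) > 0$ (same computation as in Lemma \ref{exp}); in case (ii) one takes $\lambda = 1/t \in \sigma(A)$ to obtain $|E^\varphi_{t,n}(\lambda)| = |\Delta^\varphi_n(1/n)| > 0$ (strict positivity by Lemma \ref{lemmaL} together with $\varphi \not\equiv z$). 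The minimum of these strictly positive quantities over the finite range $1 \le n < T$ yields a positive constant that can be absorbed into $c$, and the desired lower bound in the respective scale $(t^2/n)^{\alpha/2}$ or $n^{-1}t^\alpha$ follows by combining with $|\lambda|^{-\alpha}$.
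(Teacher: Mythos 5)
Your core reduction is correct and matches what the paper intends (the paper gives no proof, calling this a ``direct corollary''): the scaling identities \eqref{relation}, the scale-invariance of the spectral hypotheses, and the fact that the constants $c,T$ in Theorem~\ref{sharpHY1} are independent of $A$ together yield the $\Delta$-estimates immediately, since $nt\ge T$ whenever $t\ge T$ and $n\ge 1$. You also correctly spot the real subtlety the paper glosses over: for the $E$-estimates the natural reduction applies Theorem~\ref{sharpHY1} to $tA/n$ \emph{at time} $n$, and so only covers $n\ge T$, leaving the finitely many $n<T$ to a separate argument.

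Your direct treatment of those small $n$, however, contains a genuine gap. You assert that $\varphi\not\equiv z$ forces $\re\varphi(\pm i/\sqrt n)>0$ for every fixed $n$, ``by the same computation as in Lemma~\ref{exp}.'' That lemma only yields $\re\varphi(i\tau)>0$ when $\tau$ is small enough that $\mu\bigl((0,\pi/\tau)\bigr)>0$; for a general $\varphi\in\Phi$, $\varphi\not\equiv z$, one can have $\re\varphi(i\tau)=0$ at isolated $\tau$, and one can even arrange $\Delta^\varphi_n(i/\sqrt n)=0$ for a particular $n<T$. Concretely, take $\varphi(z)=bz+\tfrac1{40}\bigl(1-e^{-4\pi\sqrt{10}\,z}\bigr)$ with $b=1-2\pi/\sqrt{40}\in(0,1)$; one checks $\varphi\in\Phi$, $\varphi\not\equiv z$, $\mu=\tfrac1{40}\delta_{4\pi\sqrt{10}}$ gives $T=r^2/\pi^2\approx160$, and $\varphi(i/\sqrt{40})=bi/\sqrt{40}$ so that $40\,\varphi(i/\sqrt{40})=i\sqrt{40}-2\pi i$, whence $\Delta^\varphi_{40}(i/\sqrt{40})=0$ with $40<T$. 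Similarly, in case (ii) the citation of Lemma~\ref{lemmaL} is misplaced for $n<T$: that lemma only applies when $1/n\le\delta$, i.e.\ $n\ge T$, which is exactly the range you did not need it for. Both gaps are repairable along the same lines: one should instead note that $\Delta^\varphi_n\not\equiv 0$ on $i\R_+$ (resp.\ on $\R_+$) for each fixed $n$ — on $i\R_+$ because otherwise analytic continuation and $\varphi(0)=0$ would force $\varphi(z)\equiv z$, and on $\R_+$ because concavity with $\varphi(0)=0$, $\varphi'(0+)=1$, $\varphi\not\equiv z$ gives $\varphi(s)<s$ for $s>0$ — then pick, for each $n<T$, a point $\tau_n$ where it is nonzero, use $\lambda=\pm i n\tau_n/t$ (resp.\ $\lambda=1/t$) in \eqref{spmapping}, and absorb the resulting finitely many $n$-dependent constants (including the factor $\min_\alpha(\sqrt n\,\tau_n)^{-\alpha}$ needed to recover the $(t^2/n)^{\alpha/2}$ shape) into $c$.
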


\section{Appendix: Estimates of $c_\beta$}

In this appendix we give fine estimates for the constants
$c_\beta$ introduced in  \eqref{denote}.

Fix $\beta >0.$
Recall that
\[
c_{\beta}=\|u_{\beta,\tau}\|_{\Wip(\C_{+})}
=\|u_{\beta,1}\|_{\Wip(\C_{+})}, \qquad \beta, \tau >0,
\]
where $u_{\beta,\tau}=\left(\frac{z}{z+\tau}\right)^\beta.$

We start with the expansion
\begin{equation}\label{ExF1}
1-(1-t)^\beta=-\sum_{n=1}^\infty \frac{(-\beta)_n}{n!}t^n,\quad
|t|<1,
\end{equation}
where
\[
(-\beta)_n:=\prod_{k=0}^{n-1} (k-\beta),\qquad n\ge 1,\quad (-\beta)_0:=1.
\]
Then, since
\[
\frac{1}{(1+z)^n}=\frac{1}{(n-1)!}\int_0^\infty e^{-zs} e^{-s}
s^{n-1}\,d s,\qquad z\in \C_+, \,\, n\in \N,
\]
it follows  from (\ref{ExF1}) that
\begin{eqnarray*}
1-\left(\frac{z}{z+1}\right)^\beta
&=&-\sum_{n=1}^\infty
\frac{(-\beta)_n}{n!}\frac{1}{(z+1)^n}\\
&=&-\sum_{n=1}^\infty \frac{(-\beta)_n}{n! (n-1)!}\int_0^\infty
e^{-z s} e^{-s} s^{n-1}\,d s\\
& =&\int_0^\infty e^{-z s}
q_{\beta}(s)\,ds,
\end{eqnarray*}
where
\begin{eqnarray*}
q_{\beta}(s)=\beta e^{-s}\sum_{n=0}^\infty
\frac{(1-\beta)_n}{(n+1)!}\frac{s^n}{n!}= \beta
e^{-s}\left._1F_1\right.(1-\beta;2;s),
\end{eqnarray*}
and $\left._1F_1\right.$ is
 a confluent hypergeometric function. (See \cite[Chapter 4]{AnAsRo99} for a background on confluent hypergeometric functions.)
  Hence
\[
c_{\beta}=1+\int_0^\infty |q_\beta(s)|\,ds= 1+\beta \int_0^\infty
e^{-s}\left._1F_1\right.(1-\beta;2;s)\,ds.
\]

Let now $\beta=m, m \in \N$. Then
\[
q_m(s)=m e^{-s}\left._1F_1\right.(1-m;2;s)=e^{-s}L_{m-1}^{(1)}(s),
\]
where $L_k^{(1)}(\cdot)$ are the (generalized) Laguerre
polynomials, and
\begin{equation}\label{cm}
c_m= 1+\int_0^\infty e^{-s}|L_{m-1}^{(1)}(s)|\,ds,\quad m\in \N.
\end{equation}
Thus estimates of $c_\beta,$ at least for integer $\beta,$
reduce to estimates of means of the Laguerre polynomials.
One of such estimates is provided by the following result.
\begin{theorem}\label{Lag1}
For every $m \in \N,$
\begin{equation}\label{LagEs}
\int_0^\infty e^{-s}|L_{m-1}^{(1)}(s)|\,ds\le 2\sqrt{m}.
\end{equation}
\end{theorem}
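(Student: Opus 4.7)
The plan is to reduce the stated $L^{1}$-estimate to an $L^{2}$-estimate via the Cauchy--Schwarz inequality with respect to the probability measure $e^{-s}\,ds$ on $\R_{+}$. Since this measure has total mass $1$, Cauchy--Schwarz immediately gives
\begin{equation*}
\int_{0}^{\infty} e^{-s}|L_{m-1}^{(1)}(s)|\,ds
\;\le\;
\left(\int_{0}^{\infty} e^{-s}\,ds\right)^{1/2}
\left(\int_{0}^{\infty} e^{-s}(L_{m-1}^{(1)}(s))^{2}\,ds\right)^{1/2}
\;=\;
\left(\int_{0}^{\infty} e^{-s}(L_{m-1}^{(1)}(s))^{2}\,ds\right)^{1/2},
\end{equation*}
so the problem reduces to computing (or bounding) the $L^{2}$-norm of $L_{m-1}^{(1)}$ against the Laguerre weight $e^{-s}$.

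The key ingredient is the classical expansion
\[
L_{n}^{(1)}(x) \;=\; \sum_{k=0}^{n} L_{k}(x), \qquad L_{k}:=L_{k}^{(0)},
\]
which I would derive either from the generating function $\sum_{n\ge 0} L_{n}^{(\alpha)}(x)z^{n}=(1-z)^{-\alpha-1}e^{-xz/(1-z)}$ (divide the $\alpha=0$ generating function by $1-z$ to get the $\alpha=1$ one) or as the telescoped form of the standard identity $L_{n}(x)=L_{n}^{(1)}(x)-L_{n-1}^{(1)}(x)$. Given this, the orthonormality $\int_{0}^{\infty}e^{-s}L_{k}(s)L_{j}(s)\,ds=\delta_{kj}$ of the classical Laguerre polynomials in $L^{2}(\R_{+},e^{-s}\,ds)$ yields, with $n=m-1$,
\[
\int_{0}^{\infty} e^{-s}(L_{m-1}^{(1)}(s))^{2}\,ds
\;=\;
\sum_{k,j=0}^{m-1}\int_{0}^{\infty} e^{-s}L_{k}(s)L_{j}(s)\,ds
\;=\;
\sum_{k=0}^{m-1} 1 \;=\; m.
\]

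Inserting this into the Cauchy--Schwarz bound yields $\int_{0}^{\infty} e^{-s}|L_{m-1}^{(1)}(s)|\,ds\le\sqrt{m}$, which is even a strict refinement of the stated bound $2\sqrt{m}$. There is no real obstacle to the argument: once the expansion $L_{m-1}^{(1)}=\sum_{k=0}^{m-1}L_{k}$ in the orthonormal Laguerre basis is noted, the $L^{2}$-norm is computed by a two-line Bessel-type identity and Cauchy--Schwarz finishes the proof. The only thing worth pausing on is whether $\sqrt{m}$ is in fact sharp; however, for the purposes of estimating $c_{\beta}$ via \eqref{cm} the weaker constant $2\sqrt{m}$ in \eqref{LagEs} is already sufficient, so the proof can stop here.
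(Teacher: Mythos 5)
Your proof is correct, and it is both simpler and sharper than the paper's. You observe that $L_{m-1}^{(1)}=\sum_{k=0}^{m-1}L_k^{(0)}$ (an immediate consequence of dividing the $\alpha=0$ Laguerre generating function by $1-z$), so that by orthonormality of the $L_k^{(0)}$ in $L^2(\R_+,e^{-s}ds)$ one gets $\int_0^\infty e^{-s}(L_{m-1}^{(1)}(s))^2\,ds=m$ directly, and then a single application of Cauchy--Schwarz gives the bound $\sqrt{m}$, strictly better than the claimed $2\sqrt{m}$. The paper instead only has access to the weighted norm $\int_0^\infty e^{-s}s(L_{m-1}^{(1)}(s))^2\,ds=m$ (the standard $\alpha=1$ orthogonality relation), so it is forced to split the integral into $[0,1]$ and $[1,\infty)$ and to control the missing piece $\int_0^1 e^{-s}(L_{m-1}^{(1)}(s))^2\,ds$ by a rather involved computation based on Watson's integral representation for $e^{-s}(L_{m-1}^{(1)}(s))^2$; the payoff of that extra work is only the constant $1+\sqrt{5(1-2/e)/2}<2$, so your route is uniformly better. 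One small presentational point: it is worth explicitly recording the orthonormality $\int_0^\infty e^{-s}L_k^{(0)}(s)L_j^{(0)}(s)\,ds=\delta_{kj}$ (with the normalization $L_k^{(0)}(0)=1$ used throughout the paper), since this is what makes the $L^2$ norm come out exactly to $m$ rather than some combinatorial factor. With that caveat the argument is complete, and it would in fact let one improve the downstream constant in \eqref{cem} from $1+2\sqrt{m}$ to $1+\sqrt{m}$.
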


\begin{proof}
Recalling that
\[
\int_0^\infty e^{-s}s|L_{m-1}^{(1)}(s)|^2\,ds=m
\]
by \cite[Section 5.1]{Szego}, we have
\begin{align}\label{MGest}
\int_0^\infty e^{-s}|L_{m-1}^{(1)}(s)|\,ds \le& \int_0^1
e^{-s}|L_{m-1}^{(1)}(s)|(1-s)\,ds +\int_0^\infty
e^{-s}s|L_{m-1}^{(1)}(s)|\,ds\\
\le& \left(\int_0^1 e^{-s}(1-s)^2\,ds\right)^{1/2} \left(\int_0^1
e^{-s}|L_{m-1}^{(1)}(s)|^2\,ds\right)^{1/2}\notag \\
+&\left(\int_0^\infty s e^{-s}\,ds\right)^{1/2}\left(\int_0^\infty
e^{-s}s|L_{m-1}^{(1)}(s)|^2\,ds\right)^{1/2}\notag \\
\le& \sqrt{1-2e^{-1}} \left(\int_0^1
e^{-s}|L_{m-1}^{(1)}(s)|^2\,ds\right)^{1/2} +\sqrt{m}.\notag
\end{align}

By  Watson's identity \cite[p. 21]{Watson}, if   $s>0$ then
\begin{eqnarray*}
\frac{\pi}{2 m}e^{-s}|L_{m-1}^{(1)}(s)|^2
&=&\int_0^\pi L_{m-1}^{(1)}(2s(1+\cos\theta)) e^{-s(1+\cos\theta)}
\frac{\sin(s\sin\theta)}{s\sin\theta}\sin^2\theta \,d\theta\\
&=&\int_0^\pi L_{m-1}^{(1)}\left(4s\cos^2\frac{\theta}{2}\right) e^{-2s\cos^2\frac{\theta}{2}}
\frac{\sin(s\sin\theta)}{s}\sin\theta \,d\theta,
\end{eqnarray*}
hence
\[
\frac{\pi}{2 m}\int_0^1 |L_{m-1}^{(1)}(s)|^2 e^{-s}ds=\int_0^\pi
\sin\theta \, G_{m-1}(\theta)\,d\theta,
\]
where
\[
G_{m-1}(\theta):=\int_0^1 L_{m-1}^{(1)}\left(4s\cos^2\frac{\theta}{2}\right)
e^{-2s\cos^2\frac{\theta}{2}} \frac{\sin(s\sin\theta)}{s} \,ds, \qquad \theta \in [0,\pi].
\]
Using the properties
\[
L_{m-1}^{(1)}(s)=-\frac{d}{ds}L_m^{(0)}(s),\quad L_m^{(0)}(0)=1,
\]
and integrating by parts we obtain
\begin{eqnarray*}
&&4 \cos^2\frac{\theta}{2}\,G_{m-1}(\theta)= -\int_0^1
e^{-2s\cos^2\frac{\theta}{2}} \frac{\sin(s\sin\theta)}{s} \,d\left(
L_m^{(0)}\left(4s\cos^2 \frac{\theta}{2}\right)\right)\\
&=&- e^{-2\cos^2\frac{\theta}{2}} \sin(\sin\theta) L_m^{(0)}\left(4\cos^2
\frac{\theta}{2}\right) +\sin \theta\\
&-&2\cos^2\frac{\theta}{2} \int_0^1  e^{-2s\cos^2\frac{\theta}{2}}
\frac{\sin(s\sin\theta)}{s} L_m^{(0)}(4s\cos^2 \frac{\theta}{2}) \,ds\\
&+&\int_0^1  e^{-2s\cos^2\frac{\theta}{2}} \left[\sin\theta
\frac{\cos(s\sin\theta)}{s}- \frac{\sin(s\sin\theta)}{s^2}\right]
\, L_m^{(0)}\left(4s\cos^2 \frac{\theta}{2}\right)\,ds.
\end{eqnarray*}
Since  by  \cite[10.18(14)]{Erd2},
\[
|L_m^{(0)}(t)|\le  e^{t/2},\quad t\ge 0,
\]
and
\[
-\left(\frac{\sin\tau}{\tau}\right)'=\frac{\sin
\tau}{\tau^2}-\frac{\cos\tau}{\tau}>0,\qquad \tau\in (0,\pi/2),
\]
we  have
\begin{eqnarray*}
&& 4 \cos^2\frac{\theta}{2}\,G_{m-1}(\theta) \le \sin(\sin
\theta)+\sin\theta+2\cos^2\frac{\theta}{2}\int_0^1
\frac{\sin(s\sin\theta)}{s} \,ds\\
&+&\sin^2\theta \int_0^1
\left[\frac{\sin(s\sin\theta)}{s^2\sin^2\theta}-
\frac{\cos(s\sin\theta)}{s\sin\theta} \right]\,ds\\
&\le& \sin(\sin\theta)+\sin\theta +2\sin\theta \cos^2\frac{\theta}{2}
+\sin\theta\int_0^{\sin\theta}  \left[\frac{\sin\tau}{\tau^2}-
\frac{\cos\tau}{\tau}\right]\,d\tau\\
&=&\sin(\sin\theta)+\sin\theta+2 \sin\theta
\cos^2\frac{\theta}{2}+\sin\theta
\left[1-\frac{\sin(\sin\theta)}{\sin\theta}\right]\\
&=&2 \sin \theta \left(1+\cos^2\frac{\theta}{2}\right),\qquad \theta\in (0,\pi),
\end{eqnarray*}
hence
\[
|G_{m-1}(\theta)|\le \frac{\sin\frac{\theta}{2}(1+\cos^2\frac{\theta}{2})}{\cos\frac{\theta}{2}}.
\]

Then
\begin{eqnarray*}
\frac{\pi}{2 m}\int_0^1 |L_{m-1}^{(1)}(s)|^2 e^{-s}\,ds&\le&
\int_0^\pi \sin\theta\,|G_{m-1}(\theta)|\,d\theta\\
&\le& \int_0^\pi \sin\theta
\frac{\sin\frac{\theta}{2}(1+\cos^2\frac{\theta}{2})}{\cos\frac{\theta}{2}} \, d\theta\\
&=&2\int_0^\pi \sin^2\frac{\theta}{2}(1+\cos^2\frac{\theta}{2})\,d\theta\\
&=& 4\int_0^{\pi/2} (1+\cos^2\theta)\sin^2\theta\, d\theta\\
&=&2\left(B(1/2,3/2)+B(3/2,3/2)\right)\\
&=& \frac{5\pi}{4},
\end{eqnarray*}
where $B$ is the beta function, and
\[
\int_0^1 |L_{m-1}^{(1)}(s)|^2 e^{-s}\,ds \le \frac{5}{2}m.
\]
Therefore,  by (\ref{MGest}) we obtain that
\[
\int_0^\infty e^{-s}|L_{m-1}^{(1)}(s)|\,ds \le
\sqrt{m}+\sqrt{\left(1-\frac{2}{e}\right)\frac{5}{2}}\sqrt{m}\le
2\sqrt{m}.
\]

\end{proof}

\begin{cor}\label{Corc_m}
For all $\beta \ge 1$ and $m \in \mathbb N,$
\begin{equation}\label{cem}
c_m\le 1+2\sqrt{m}, \qquad c_\beta\le 2(1+2\sqrt{\beta}).
\end{equation}
\end{cor}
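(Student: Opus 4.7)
The proof splits naturally into two steps: the integer case, which is an immediate consequence of Theorem~\ref{Lag1}, and the real case, which is reduced to the integer case by a multiplicativity argument in the algebra $\Wip(\C_+)$.

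For the first inequality, I would simply combine the integral representation \eqref{cm} with the Laguerre mean estimate \eqref{LagEs}:
\[
c_m = 1 + \int_0^\infty e^{-s}|L_{m-1}^{(1)}(s)|\,ds \le 1 + 2\sqrt{m},
\]
valid for every $m\in\mathbb N$. No further work is required here.

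For the second inequality, fix $\beta \ge 1$ and write $\beta = m + \alpha$ with $m := \lfloor \beta \rfloor \ge 1$ and $\alpha \in [0,1)$. The multiplicative identity
\[
u_{\beta,1}(z) = u_{m,1}(z)\, u_{\alpha,1}(z), \qquad z \in \mathbb C_+,
\]
combined with the Banach algebra property of $\Wip(\C_+)$, gives
\[
c_\beta = \|u_{\beta,1}\|_{\Wip(\C_+)} \le \|u_{m,1}\|_{\Wip(\C_+)}\, \|u_{\alpha,1}\|_{\Wip(\C_+)} = c_m\, \|u_{\alpha,1}\|_{\Wip(\C_+)}.
\]
If $\alpha = 0$, the second factor equals $1$ and the first part already yields $c_\beta = c_m \le 1 + 2\sqrt{m} \le 2(1+2\sqrt{\beta})$. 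If $\alpha \in (0,1)$, then by \eqref{betebeta} we have $\|u_{\alpha,1}\|_{\Wip(\C_+)} = 2$, so
\[
c_\beta \le 2 c_m \le 2(1 + 2\sqrt{m}) \le 2(1+2\sqrt{\beta}),
\]
using $m \le \beta$ in the last step.

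There is no real obstacle here: the entire content of the corollary is already packed into Theorem~\ref{Lag1} and the elementary norm identity~\eqref{betebeta}. The only point worth being careful about is ensuring $m\ge 1$ so that Theorem~\ref{Lag1} applies, which is guaranteed by the hypothesis $\beta \ge 1$.
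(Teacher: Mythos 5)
Your proof is correct and follows essentially the same route as the paper: the integer case comes directly from \eqref{cm} together with Theorem~\ref{Lag1}, and the non-integer case via the factorization $u_{\beta,1}=u_{m,1}u_{\alpha,1}$, submultiplicativity, and $\|u_{\alpha,1}\|_{\Wip(\C_+)}=2$ from \eqref{betebeta}. In fact your writeup is slightly more careful than the paper's: you explicitly handle $\alpha=0$, and the final displayed chain in the paper's own proof appears to drop a factor of $2$ in front of $\sqrt m$, which you do not.
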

\begin{proof}
The bound for  $c_m, m \in \mathbb N,$ is a direct consequence of
Theorem \ref{Lag1}.

To estimate $c_\beta$ for $\beta \ge 1,$ we argue as in the proof of Lemma \ref{betaW}. If $\beta=m+\alpha, m \in \N, \alpha \in (0,1),$  then by \eqref{betebeta},
$$
c_\beta \le c_m c_\alpha \le 2(1+\sqrt m)\le 2(1+\sqrt \beta).
$$
\end{proof}

\end{document}